\newtheorem{theorem}{Theorem}[section]
\newtheorem{definition}[theorem]{Definition}
\newtheorem{proposition}[theorem]{Proposition}
\newtheorem{lemma}[theorem]{Lemma}
\newtheorem{remark}[theorem]{Remark}
\newtheorem{example}[theorem]{Example}
\newcommand{\abs}[1]{\lvert#1\rvert}
\newcommand{\norm}[1]{\lVert#1\rVert}
\newcommand{\red}[1]{\textcolor{red}{#1}}
\numberwithin{equation}{section}
\title[A logarithmic Schrodinger equation]{Existence and multiplicity of solutions for the logarithmic Schr\"{o}dinger equation with a potential on lattice graphs}
\author[Z.T. He]{Zhentao He}
\address[Z.T. He]{\newline\indent
	School of Mathematics
	\newline\indent
	East China University of Science and Technology
	\newline\indent
	Shanghai 200237, PR China }
\email{\href{mailto:hezhentao2001@outlook.com}{hezhentao2001@outlook.com}}
\author[C. Ji]{Chao Ji}
\address[C. Ji]{\newline\indent
	School of Mathematics
	\newline\indent
	East China University of Science and Technology
	\newline\indent
	Shanghai 200237, PR China }
\email{\href{mailto:jichao@ecust.edu.cn}{jichao@ecust.edu.cn}}
\subjclass[2010]{ 35J60, 58E30, 35R02.}
\date{\today}
\keywords{Logarithmic Schr\"{o}dinger equation, Lattice graphs,  Ground states, Multiplicity of solutions, Nonsmooth critical point theory.}
\begin{document}

\begin{abstract}
In this paper, we consider the existence and multiplicity of solutions for the logarithmic Schr\"{o}dinger equation on lattice graphs $\mathbb{Z}^N$
$$
-\Delta u+V(x) u=u \log u^2, \quad x \in \mathbb{Z}^N,
$$
When the potential $V$ is coercive, we obtain infinitely many solutions by adapting some arguments of the Fountain theorem. In the cases of periodic potential, asymptotically periodic potential and bounded potential,  we first investigate the existence of ground state solutions via the variation methods, and then we generalize these results from $\mathbb{Z}^N$ to quasi-transitive graphs. Finally, we extend the main results of the paper to the $p$-Laplacian equation with the logarithmic nonlinearity.
\end{abstract}

\maketitle

\begin{center}
	\begin{minipage}{11cm}
		\tableofcontents
	\end{minipage}
\end{center}

\section{Introduction and main results}
In the past decade, a vast literature is dedicated to the following nonlinear logarithmic  Schr\"{o}dinger equation
\begin{equation}\label{loga1}
-\Delta u+V(x) u=u \log u^2, \,\,\text { in } \mathbb{R}^N.
\end{equation}
Because this class of equations has some important physical applications, such as quantum mechanics, quantum optics, nuclear physics, transport and diffusion phenomena, open quantum systems, effective quantum gravity, theory of superfluidity and Bose-Einstein condensation (see \cite{Zl} and the references therein). On the other hand, the logarithmic Schr\"{o}dinger equation also raises many difficult mathematical problems, for example, the associated energy functional is not of class $C^{1}$ in $H^1\left(\mathbb{R}^N\right)$. In fact, it is not continuous in $H^1\left(\mathbb{R}^N\right)$, because there exists $u \in H^1\left(\mathbb{R}^N\right)$ such that $\int_{\mathbb{R}^N} u^2 \log u^2 d x=-\infty$. In order to overcome this technical difficulty, some authors have used different techniques to study the existence, multiplicity and concentration of the solutions under some assumptions on the potential $V$, which can be seen in \cite{AD, AJ1, AJ3, AJ4, DMS, JS, SS, SS1, WZ, ZW1, ZZ} and the references therein.

In recent years, the various mathematical problems on graphs have been extensively investigated (see \cite{CWY, Gr, Gri1, hua2021existence, HX1, HLY, Shao}  and references therein).  In particular, in the monograph \cite{Gr}, Grigor'yan introduced the discrete Laplace operator on finite and infinite graphs; in \cite{HLY}, Huang, Lin and Yau proved two existence results of solutions to mean field equations on an arbitrary connected finite graph; in \cite{HX1}, applying the Nehari method, Hua and Xu studied the existence of ground states of the nonlinear Schr\"{o}dinger equation $-\Delta u+V(x) u=f(x, u)$ on the lattice graph $\mathbb{Z}^N$ where $f$ satisfies some growth conditions and the potential function $V$ is periodic or bounded. Due to our
scope, we would like to mention the recent contribution \cite{CWY} where Chang et al. studied the following logarithmic Schr\"{o}dinger equations on locally finite graphs
\begin{equation}\label{eqlogv}
    -\Delta u+V(x) u=u \log u^2 \,\, \text { in } \,\mathbb{V},
\end{equation}
where $\Delta$ is the graph Laplacian, $G=(\mathbb{V}, E)$ is a connected locally finite graph, the potential $V: \mathbb{V} \rightarrow \mathbb{R}$ is bounded from below and may change sign. We know that one of the significant challenges in studying the above problem is the lack of compactness. Under two appropriate assumptions on potential $V(x)$, the authors first established two Sobolev compact embedding theorems. Then, they obtained  ground state solutions of the above problem by using the Nehari manifold method and the mountain pass theorem respectively. However, under the general assumptions on the potential $V(x)$, such Sobolev compact embedding results are not to be expected.

Motivated by \cite{SS, JS, HX1}, in the present paper,  we will study the logarithmic Schr\"{o}dinger equation with the potentials on lattice graph
\begin{equation}\label{loga}
-\Delta u+V(x) u=u \log u^2, \quad x \in \mathbb{Z}^N.
\end{equation}
Specifically, when the potential is coercive, we establish the existence of infinitely many solutions by adapting some arguments of the Fountain theorem and a Sobolev compact embedding theorem, and in the case of periodic potential, asymptotic periodic potential and  bounded potential, we obtain the existence of the ground state solutions via variational methods. Here it is essential to note that, in the case of periodic potential, asymptotic periodic potential and  bounded potential,  the compact embedding theorems in \cite{CWY} are absent, even on lattice graph. Therefore, our problem is more challenging and complicated. Another novelty arises when the potential $V(x)$ is bounded and $\underset{x\in \mathbb{Z}^N}\inf V(x)> -1$, in contrast to the continuous case, due to the equivalence of two spaces $H^1\left(\mathbb{V}\right)$ and $\ell^2\left(\mathbb{V}\right)$(see the detail in Section 2), there is no need to rely on logarithmic Sobolev inequality to prove the boundedness of the Palais-Smale ($(PS)$, for short) sequence. In the case of a coercive potential, the previous arguments dees not work,  we establish logarithmic Sobolev inequality on lattice graph to show the boundedness of the $(PS)$ sequence. We believe that this has independent interest and can be applied to the research of other related problems.

Now we recall some basic setting on graphs from \cite{HX1, CWY}. Let $G=(\mathbb{V}, \mathbb{E})$ be a connected, undirected, locally finite graph, where $\mathbb{V}$ denotes a set of vertices and $\mathbb{E}$ denotes a set of edges. Let us write $x \sim y(x$ is a neighbour of $y)$ if $xy \in \mathbb{E}$.  A graph is called locally finite if each vertex has finitely many neighbours. A graph is connected if any two vertices $x$ and $y$ can be connected via finite edges.   The graph $G$ is undirected means that each $xy\in \mathbb{E}$ is unordered. For any edge $x y \in \mathbb{E}$, we assume the weight $\omega_{x y}$ satisfies $\omega_{x y}=\omega_{y x}>0$. The degree of $x \in \mathbb{V}$ is defined as $\operatorname{deg}(x)=\sum_{y \sim x} \omega_{x y}$. The distance $d(x, y)$ of two vertices $x, y \in \mathbb{V}$ is defined by the minimal number of edges which connect $x$ and $y$. If the distance $d(x,y)$ is uniformly bounded from above for any $x, y \in \Omega$, we call $\Omega$ a bounded domain in $\mathbb{V}$. The boundary of $\Omega$ is defined as $\partial\Omega := \{y\not\in\Omega : \exists x\in\Omega \text{ such that } xy \in \mathbb{E}\}$. The measure $\mu: \mathbb{V} \rightarrow \mathbb{R}^{+}$on the graph is a finite positive function on $G$. Denote the set of functions on $G$ by $C(\mathbb{V})$, for any function $u\in C(\mathbb{V})$, the graph Laplacian of $u$ is defined by
$$
\Delta u(x):=\frac{1}{\mu(x)}\sum_{y \in \mathbb{V}, y \sim x}w_{xy}(u(y)-u(x)) .
$$

Throughout this paper, we consider the case $w_{xy}=1$ for any $xy\in \mathbb{E}$ and  $\mu$ be the counting measure on $\mathbb{V}$, i.e., for any subset $A \subset \mathbb{V}, \mu(A):=\#\{x: x \in A\}$. For any function $f$ on $\mathbb{V}$, we write
$$
\int_{\mathbb{V}} f d \mu:=\sum_{x \in \mathbb{V}} f(x)
$$
whenever it makes sense. Under these assumptions, for any function $u\in C(\mathbb{V})$, the graph Laplacian of $u$ is defined by
$$
\Delta u(x):=\sum_{y \in \mathbb{V}, y \sim x}(u(y)-u(x)).
$$
Let  $C_c(\mathbb{V})$ be the set of all functions with finite support and  $H^{1}(\mathbb{V})$ is the completion of $C_c(\mathbb{V})$ under the norm
$$
\|u\|:=\left(\frac{1}{2} \sum_{x \in \mathbb{V}} \sum_{y \sim x}(u(y)-u(x))^2+\sum_{x \in \mathbb{V}} u^2(x)\right)^{1 / 2} .
$$
For any $p \in[1, \infty]$, let $\ell^p(\mathbb{V})$ be the space of $\ell^p$ summable functions on $\mathbb{V}$ and write $\|\cdot\|_p$ as the $\ell^p(\mathbb{V})$ norm, i.e.,
$$
\|u\|_p:=\left\{\begin{array}{lc}
\Big(\sum_{x \in \mathbb{V}}|u(x)|^p\Big)^{1/p}, & 1 \leq p<\infty, \\
\sup _{x \in \mathbb{V}}|u(x)|, & p=\infty .
\end{array}\right.
$$
Let us denote by $\mathbb{Z}^N$ the standard lattice graph with the set of vertices
$$
\left\{x=\left(x_1, \ldots, x_N\right): x_i \in \mathbb{Z}, 1 \leq i \leq N\right\}
$$
and the set of edges
$$
\left\{\{x, y\}: x, y \in \mathbb{Z}^N, \sum_{i=1}^N\left|x_i-y_i\right|=1\right\} .
$$

For $T \in \mathbb{N}$, a function $g$ on $\mathbb{Z}^N$ is called T-periodic if $g\left(x+T e_i\right)=g(x), \forall x \in \mathbb{Z}^N, 1 \leq$ $i \leq N$, where $e_i$ is the unit vector in the $\mathrm{i}$-th coordinate.

The energy functional $J$ associated with problem \eqref{eqlogv} is
$$
J(u):=\frac{1}{2} \int_{\mathbb{V}}\left(|\nabla u|^2+(V(x)+1) u^2\right) d \mu-\frac{1}{2} \int_{\mathbb{V}} u^2 \log u^2 d \mu.
$$
If $G=(\mathbb{V},\mathbb{E})$ is the lattice graph $\mathbb{Z}^N$, then $J(u)$ is not of class $C^{1}$ in $H^1(\mathbb{Z}^N)$. Because there exists a function $u\in H^1(\mathbb{Z}^N)$ such that $\int_{\mathbb{V}} u^2 \log u^2 d \mu=-\infty$, the readers could find an example for $p=2$ in the appendix.

Throughout this paper,  we always assume that $\underset{x\in \mathbb{V}}\inf V(x)> -1$. In fact, $\underset{x\in \mathbb{V}}\inf V(x)>-\infty$ is enough for our study. Let us substitute $u$ with $\lambda v(\lambda>0)$ in \eqref{eqlogv}, one has
\begin{equation}\label{eqlambda}
-\Delta v+\left(V(x)-\log \lambda^2\right) v= v \log v^2, \quad x \in \mathbb{V}.
\end{equation}
Thus $(V(x)+1)>0$ is not essential. By replacing $V(x)$ with $\widehat{V}(x) \equiv V(x)-\log \lambda^2$, we get $\widehat{V}(x)>-1$ for a suitable $\lambda>0$ and thus we can recover the desired result for any $V(x)$ bounded from below.
Naturally, an important problem  is  to investigate the existence of solutions to problem  \eqref{loga} when $\underset{x\in \mathbb{V}}\inf V(x)=-\infty$.

Now it is the position to present our first main result.

\begin{theorem}\label{theo110}
 If $\lim _{d(x,x_0) \rightarrow \infty} V(x) =+\infty$ for some $x_0 \in \mathbb{V}$, then equation \eqref{eqlogv} has infinitely many solutions $\pm u_n\in X$ such that $\lim _{n \rightarrow \infty} J\left( \pm u_n\right)=\infty$.
\end{theorem}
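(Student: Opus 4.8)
The plan is to realize $J$ as a Szulkin-type functional on the coercive energy space $X$, equipped with the norm $\|u\|_X^2 := \int_{\mathbb V}\bigl(|\nabla u|^2 + (V(x)+1)u^2\bigr)\,d\mu$, and to apply an even Fountain theorem in the nonsmooth critical point framework. Since $J$ is not $C^1$, I would first split the singular logarithmic term by writing $\tfrac12 s^2\log s^2 = \tilde F(s) - \hat F(s)$, where, for a fixed $\delta\in(0,1)$ small enough, $\hat F\ge 0$ is even, convex and of class $C^1$, agreeing with $-\tfrac12 s^2\log s^2$ on $\{|s|\le\delta\}$ (where that function is convex and nonnegative) and extended linearly for $|s|>\delta$, while $\tilde F\in C^1(\mathbb R)$ vanishes near the origin and grows like $\tfrac12 s^2\log s^2$ at infinity. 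This yields the decomposition $J=\Phi+\Psi$ with $\Phi(u):=-\int_{\mathbb V}\tilde F(u)\,d\mu\in C^1(X)$ and $\Psi(u):=\tfrac12\|u\|_X^2+\int_{\mathbb V}\hat F(u)\,d\mu$ convex, lower semicontinuous and proper (possibly $+\infty$-valued), so that $J$ fits Szulkin's theory and its critical points are exactly the weak solutions of \eqref{eqlogv}. As $V$ enters $J$ only through $u^2$ and $\log u^2$ is even, $J$ is even, as the Fountain scheme requires.

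Next I would record the two analytic ingredients advertised in the introduction. The coercivity $V(x)\to+\infty$ gives, through the Sobolev compact embedding theorem already established, that $X\hookrightarrow\ell^q(\mathbb V)$ compactly for every $q\in[2,\infty]$. Writing $X=\overline{\bigoplus_j X_j}$ with $Y_k=\bigoplus_{j\le k}X_j$ and $Z_k=\overline{\bigoplus_{j\ge k}X_j}$, this compactness forces the constants $\beta_k:=\sup\{\|u\|_\infty : u\in Z_k,\ \|u\|_X=1\}$ to satisfy $\beta_k\to 0$. I would then verify the nonsmooth Palais--Smale condition. For the boundedness of a $(PS)_c$ sequence $\{u_n\}$ I would combine the asymptotic test-function identity $\int_{\mathbb V}u_n^2\log u_n^2\,d\mu = \|u_n\|_X^2 - \|u_n\|_2^2 + o(\|u_n\|_X)$ with the logarithmic Sobolev inequality on $\mathbb Z^N$, which controls $\int_{\mathbb V}u^2\log u^2\,d\mu$ by $\|u\|_2^2\log\|u\|_2^2$ plus a term growing only logarithmically in $\|u\|_X$; matching the $\|u_n\|_X^2$-growth of the left-hand side against the at-most-logarithmic growth of the right-hand side forces $\|u_n\|_X$ to stay bounded. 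Boundedness together with the compact embedding then yields a strongly convergent subsequence, and the convexity inequality characterizing Szulkin critical points gives $(PS)_c$.

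With compactness in hand I would check the geometry of the Fountain theorem. On the finite-dimensional space $Y_k$ all norms are equivalent and the term $-\tfrac12\int_{\mathbb V}u^2\log u^2\,d\mu$ drives $J(u)\to-\infty$ as $\|u\|_X\to\infty$, so there is $\rho_k$ with $\max_{u\in Y_k,\ \|u\|_X=\rho_k}J(u)\le 0$. For the linking estimate on $Z_k$ I would use $s^2\log s^2\le C_\varepsilon|s|^{2+\varepsilon}$ for $|s|\ge 1$ (the contribution of $|s|<1$ being nonpositive), giving $\int_{\mathbb V}u^2\log u^2\,d\mu\le C_\varepsilon\|u\|_\infty^\varepsilon\|u\|_2^2\le C_\varepsilon\beta_k^\varepsilon\|u\|_X^{2+\varepsilon}$ on $Z_k$; hence for $u\in Z_k$ with $\|u\|_X=r_k$,
$$
J(u)\ \ge\ \tfrac12 r_k^2\bigl(1-C_\varepsilon\beta_k^\varepsilon r_k^\varepsilon\bigr),
$$
and choosing $r_k\to\infty$ slowly enough that $\beta_k^\varepsilon r_k^\varepsilon\to 0$ makes $b_k:=\inf_{u\in Z_k,\ \|u\|_X=r_k}J(u)\to+\infty$ while $\rho_k>r_k$. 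The even Fountain theorem in Szulkin's nonsmooth setting then produces an unbounded sequence of critical values and the desired pairs $\pm u_n\in X$ with $J(\pm u_n)\to\infty$.

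I expect the decisive obstacle to be the boundedness of $(PS)$ sequences. Unlike the bounded-potential case, the equivalence $H^1(\mathbb V)\cong\ell^2(\mathbb V)$ is unavailable on the coercive space, so the logarithmic term cannot be sidestepped, and the whole argument hinges on first establishing a sharp logarithmic Sobolev inequality on $\mathbb Z^N$ and then balancing its logarithmic growth against the quadratic growth of $\|u_n\|_X^2$. A secondary technical point is the careful construction of the splitting $\tilde F-\hat F$ so that $\Psi$ is genuinely convex and $\Phi$ is of class $C^1$, which is what legitimizes the use of the nonsmooth critical point machinery.
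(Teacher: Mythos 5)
Your proposal is correct and follows essentially the same route as the paper: the Szulkin decomposition $J=\Phi+\Psi$ via a convex truncation of $-\tfrac12 s^2\log s^2$ near the origin, the compact embedding $X\hookrightarrow\ell^q(\mathbb V)$ for $q\in[2,\infty]$, a logarithmic Sobolev inequality to bound $(PS)$ sequences, and the Fountain geometry with $\beta_k\to 0$ and a nonsmooth deformation argument on compactly supported minimax classes. The only point worth flagging is that the discrete inequality is much simpler than the continuous one you seem to have in mind: on $\ell^2(\mathbb V)$ one has $\int_{\mathbb V}u^2\log u^2\,d\mu\le\|u\|_2^2\log\|u\|_2^2$ outright (since $|u(x)|\le\|u\|_2$ pointwise), with no gradient term and no logarithmic correction in $\|u\|_X$, so the boundedness step reduces to comparing $\|u_n\|_X^2$ with the subquadratic quantity $\|u_n\|_2^2\log\|u_n\|_2^2\lesssim 1+\|u_n\|_X^{r}$, $r\in(1,2)$, exactly as you intend.
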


When the potential is bounded, we have the following three results.\\
\begin{theorem}\label{theo120}
Assume that $G=(\mathbb{V},\mathbb{E})$ be the lattice graph $\mathbb{Z}^N$. If the potential $V(x)$  is  $T$-periodic in $x$, then equation \eqref{loga} has a ground state solution $u$. Moreover, $u(x) > 0$ for all $x \in \mathbb{Z}^N$ or $u(x) < 0$ for all $x \in \mathbb{Z}^N$.
\end{theorem}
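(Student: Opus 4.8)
The plan is to obtain the ground state as a least--energy nontrivial critical point of the nonsmooth functional $J$ on $X:=H^1(\mathbb{Z}^N)$. Since $V$ is $T$--periodic it is bounded, and with $\inf_{\mathbb{Z}^N}V>-1$ the quantity $\|u\|_*^2:=\int_{\mathbb{Z}^N}(|\nabla u|^2+(V+1)u^2)\,d\mu$ defines an equivalent norm on $X$. Because every vertex of $\mathbb{Z}^N$ has degree $2N$, the norms of $H^1(\mathbb{Z}^N)$ and $\ell^2(\mathbb{Z}^N)$ are equivalent, and point evaluation $u\mapsto u(x)$ is a bounded linear functional on $\ell^2(\mathbb{Z}^N)$, so weak convergence forces pointwise convergence; I will use both facts repeatedly. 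As $J$ is not $C^1$ (indeed not continuous), I work in Szulkin's critical point theory: following the Squassina--Szulkin splitting I write $\tfrac12 s^2\log s^2=F_2(s)-F_1(s)$ with $F_1\ge0$ convex and $F_2\in C^1$ of subcritical growth, so that $J=\Phi+\Psi$ with $\Phi(u)=\tfrac12\|u\|_*^2-\int_{\mathbb{Z}^N}F_2(u)\,d\mu\in C^1(X,\mathbb{R})$ and $\Psi(u)=\int_{\mathbb{Z}^N}F_1(u)\,d\mu$ convex, proper and lower semicontinuous.

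Next I verify the mountain pass geometry: $J(0)=0$; for $\|u\|_*=\rho$ small the bound $\int F_2(u)\le\varepsilon\|u\|_2^2+C_\varepsilon\|u\|_p^p$ (some $p>2$) together with $\|u\|_p\le\|u\|_2\le C\|u\|_*$ and $F_1\ge0$ give $J(u)\ge\alpha>0$; and for fixed $\varphi\in C_c(\mathbb{Z}^N)$ one has $J(t\varphi)\to-\infty$ since the subtracted term $\tfrac12 t^2(\log t^2)\int\varphi^2\,d\mu$ dominates. Szulkin's mountain pass theorem then produces a Palais--Smale sequence $(u_n)$ at the level $c=\inf_\gamma\max_t J(\gamma(t))>0$. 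A direct computation shows that for every $v\neq0$ the fibering map $t\mapsto J(tv)$ has a unique maximum, attained at $t=1$ whenever $v$ is a solution; hence $c=\inf_{v\neq0}\max_{t>0}J(tv)$ and every nontrivial solution $v$ satisfies $J(v)=\max_{t>0}J(tv)\ge c$. Testing the equation with $v$ gives the identity $J(v)=\tfrac12\|v\|_2^2$ for every solution, and the same computation applied to the Palais--Smale sequence yields $\|u_n\|_2^2=2J(u_n)-\langle J'(u_n),u_n\rangle\to2c$.

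Compactness is where the graph structure does the work. From $\|u_n\|_2^2\to2c$ and the $H^1$--$\ell^2$ equivalence, $(u_n)$ is bounded in $X$; this is exactly where, on $\mathbb{Z}^N$, one avoids the logarithmic Sobolev inequality. Vanishing is excluded directly: if $\|u_n\|_\infty\to0$ then for large $n$ every $|u_n(x)|<1$, whence $-\int u_n^2\log u_n^2=\sum_x u_n(x)^2|\log u_n(x)^2|\ge 2\log(1/\|u_n\|_\infty)\,\|u_n\|_2^2\to+\infty$, contradicting $\int u_n^2\log u_n^2=\|u_n\|_*^2-\|u_n\|_2^2+o(1)\ge-C$. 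So $\limsup_n\|u_n\|_\infty>0$, and using the $T\mathbb{Z}^N$--invariance of $J$ I translate the $u_n$ by multiples of the period, keeping the peak in the fundamental domain; along a subsequence the translates converge weakly, hence pointwise, to some $u\neq0$. Passing to the limit in the equation against finitely supported test functions (where $s\mapsto s\log s^2$ is continuous and the sums are finite) shows $u$ is a nontrivial solution, and then $J(u)=\tfrac12\|u\|_2^2\le\tfrac12\liminf_n\|u_n\|_2^2=c$ by weak lower semicontinuity of the $\ell^2$ norm; combined with $J(u)\ge c$ this gives $J(u)=c$, so $u$ is a ground state. I expect the main difficulty to lie in the bookkeeping inside Szulkin's nonsmooth framework — producing the Palais--Smale sequence and justifying the passage to the limit that identifies $u$ as a genuine solution at level $c$ — whereas the boundedness and the exclusion of vanishing are, thanks to the $H^1$--$\ell^2$ equivalence, comparatively clean.

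Finally, for the sign I use the discrete diamagnetic inequality $(|u|(y)-|u|(x))^2\le(u(y)-u(x))^2$, which gives $\||u|\|_*\le\|u\|_*$ while $\int|u|^2\log|u|^2=\int u^2\log u^2$, so $J(t|u|)\le J(tu)$ for all $t>0$. If the ground state $u$ changed sign there would be an edge $xy$ with $u(x)>0>u(y)$, making this inequality strict and forcing $\max_t J(t|u|)<\max_t J(tu)=c$, which contradicts $\max_t J(t|u|)\ge c$; hence $u$ has constant sign, say $u\ge0$. Writing the equation as $\Delta u=Vu-u\log u^2$ and using the convention $0\log0=0$, at any vertex $x_0$ with $u(x_0)=0$ one obtains $\sum_{y\sim x_0}u(y)=\Delta u(x_0)=0$, so every neighbour of $x_0$ vanishes; since $\mathbb{Z}^N$ is connected this yields $u>0$ everywhere, and symmetrically $u<0$ everywhere in the opposite case.
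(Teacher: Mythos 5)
Your existence argument follows essentially the same route as the paper: Szulkin's splitting $J=\Phi+\Psi$, mountain pass geometry, boundedness of the $(PS)$ sequence via the $H^1$--$\ell^2$ equivalence (indeed avoiding the logarithmic Sobolev inequality), exclusion of vanishing, periodic translation of the peak, and identification of the weak limit as a nontrivial critical point at level $c$. Your exclusion of vanishing via a direct lower bound $-\int u_n^2\log u_n^2\ge 2\log(1/\|u_n\|_\infty)\|u_n\|_2^2$ is a small variant of the paper's interpolation argument and is fine; so is replacing the paper's identity $c=c_{\mathcal N}$ by the observation that every nontrivial critical point $v$ satisfies $J(v)=\max_{t>0}J(tv)\ge c$.

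The genuine gap is in the sign argument. You claim that if the ground state $u$ changes sign then there must be an edge $xy$ with $u(x)>0>u(y)$, and you need this strict sign change to make the discrete diamagnetic inequality $(|u|(y)-|u|(x))^2\le (u(y)-u(x))^2$ strict somewhere. But on a connected graph a sign-changing function need not have such an edge: the sets $\{u>0\}$ and $\{u<0\}$ can both be nonempty while being separated by the zero set $\{u=0\}$ (think of the values $1,0,-1$ along a path). In that case $J(t|u|)=J(tu)$ for every $t$ and your contradiction evaporates; your subsequent connectivity argument (that $u(x_0)=0$ forces all neighbours to vanish) cannot rescue it, because it uses $\sum_{y\sim x_0}u(y)=0$ together with a one-sided sign of the neighbours, which is exactly what is not yet known. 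The paper closes this case differently: for \emph{every} $u\in D(J)$ one has $J(u)\ge J(u^+)+J(u^-)$ (with equality off the strictly sign-changing edges), and if $u^\pm\neq0$ the Nehari projections $t_\pm u^\pm\in\mathcal N$ satisfy $t_\pm>1$ because $J(t_\pm u^\pm)\ge c=\tfrac12\int u^2>\tfrac12\int|u^\pm|^2$, whence $J(u^+)+J(u^-)=\sum_\pm\tfrac{2\log t_\pm+1}{2}\int|u^\pm|^2>\tfrac12\int u^2=J(u)$, a contradiction covering all sign-changing configurations. You would need to add this (or an equivalent) argument for the case where the positive and negative parts are separated by zeros; once constant sign is established, your final connectivity step for strict positivity is correct.
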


For the asymptotically periodic potential, we also have a  result similar to Theorem \ref{theo120}.
\begin{theorem}\label{theo130}
Assume that $G=(\mathbb{V},\mathbb{E})$ be the lattice graph $\mathbb{Z}^N$. If the potential $V(x)$ satisfies the following assumptions:
\begin{enumerate}[label=($V_\arabic{*}$), ref=$V_\arabic{*}$]
\item\label{V1}
There exists a $T$-periodic function $V_p$ such that
$$
-1< \inf _{x\in\mathbb{Z}^N} V(x) \leq V(x) \leq V_p(x), \,\,\text{for all}\,\, x \in \mathbb{Z}^N,
$$
and
$$
\left|V(x)-V_p(x)\right| \rightarrow 0 \quad \text { as } \quad|x| \rightarrow+\infty,
$$
\end{enumerate}
then problem \eqref{loga} has a ground state solution $u$. Moreover, $u(x) > 0$ for all $x \in \mathbb{Z}^N$ or $u(x) < 0$ for all $x \in \mathbb{Z}^N$.
\end{theorem}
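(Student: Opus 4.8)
The plan is to realize the ground state as a minimizer of $J$ on the associated Nehari-type set (equivalently, a mountain-pass-type critical point in the nonsmooth sense, after splitting off the non-$C^1$ logarithmic part as in the Squassina--Szulkin framework), and to recover the compactness that is otherwise lost at infinity by comparing the asymptotically periodic functional with its periodic model. Write $J$ for the functional associated with $V$ and let
\[
J_p(u) := \frac{1}{2}\int_{\mathbb{V}} \left(\abs{\nabla u}^2 + (V_p(x)+1)u^2\right) d\mu - \frac{1}{2}\int_{\mathbb{V}} u^2 \log u^2 \, d\mu
\]
be the functional attached to the $T$-periodic potential $V_p$; denote by $c$ and $c_p$ the respective ground state levels. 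By Theorem \ref{theo120} the periodic problem admits a ground state $u_p$ at level $c_p$, and $u_p$ has constant sign, so $u_p(x)\neq 0$ at every vertex.

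The first key step is the strict energy comparison $c < c_p$ (the case $V\equiv V_p$ being exactly Theorem \ref{theo120}). Since $(\ref{V1})$ gives $V\le V_p$ pointwise, one has $J(u)\le J_p(u)$ for every $u$, whence $c\le c_p$. For strictness I would test the constraint defining $c$ with a suitably rescaled multiple of the periodic ground state $u_p$: because $V(x_0)<V_p(x_0)$ at some vertex $x_0$ while $u_p(x_0)\neq 0$, the correction $\tfrac12\int_{\mathbb{V}}(V-V_p)u_p^2\,d\mu$ is strictly negative, which pushes the level strictly below $c_p$. Here the monotonicity of the fibering map $t\mapsto J(tu)$ for the logarithmic nonlinearity, together with the identity that along critical points $J$ reduces to $\tfrac12\norm{u}_2^2$, makes the comparison of constrained infima legitimate.

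Next I would produce a bounded (PS) sequence $(u_n)$ for $J$ at level $c$. Boundedness is comparatively soft here: since on $\mathbb{Z}^N$ the norms of $H^1(\mathbb{V})$ and $\ell^2(\mathbb{V})$ are equivalent, the gradient contribution is already controlled by $\norm{u_n}_2^2$, so --- as stressed in the introduction --- the logarithmic Sobolev inequality is not needed in this bounded-potential regime; the identity $J(u)-\tfrac12 J'(u)[u]=\tfrac12\norm{u}_2^2$ (read in the nonsmooth sense, and holding asymptotically along the sequence) then pins down $\norm{u_n}_2$. Passing to a subsequence, $u_n\rightharpoonup u$ weakly, which on the discrete graph yields pointwise convergence $u_n(x)\to u(x)$ at every vertex. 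If $u\neq 0$, pointwise convergence and boundedness let me pass to the limit in the Euler--Lagrange relation to conclude that $u$ solves \eqref{loga}; a lower-semicontinuity argument then gives $J(u)\le c$, and since $u$ is admissible $J(u)=c$, so $u$ is the desired ground state.

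The heart of the matter --- and the step I expect to be hardest --- is excluding the vanishing alternative $u=0$. If $u=0$, the mass of $u_n$ must escape to infinity, and I would invoke a discrete non-vanishing lemma to find vertices $y_n$ with $d(y_n,x_0)\to\infty$ such that the translates $\tilde u_n:=u_n(\cdot+y_n)$ converge weakly to some $\tilde u\neq 0$. Because $\abs{V-V_p}\to 0$ at infinity and $V_p$ is $T$-periodic, after passing to a subsequence making $y_n$ congruent modulo $T$ the translated potentials converge to a translate of $V_p$, so $\tilde u$ solves the periodic equation and hence $J_p(\tilde u)\ge c_p$. On the other hand, weak lower semicontinuity and the energy bookkeeping force $c\ge J_p(\tilde u)\ge c_p$, contradicting the strict inequality $c<c_p$ established above; this rules out vanishing and yields a nontrivial ground state. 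Finally, for the sign statement I would replace $u$ by $\abs{u}$, which does not increase the energy since $\abs{\nabla\abs{u}}\le\abs{\nabla u}$ on the graph while the $\ell^2$ and logarithmic terms depend only on $u^2$, so one may take $u\ge 0$; a strong maximum principle on the connected lattice (at a hypothetical zero $x_0$ of $u$ one gets $\sum_{y\sim x_0}u(y)=0$, forcing $u\equiv 0$) upgrades this to $u>0$ everywhere, and the oddness of the nonlinearity provides the alternative $u<0$.
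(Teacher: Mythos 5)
Your existence argument follows the paper's route almost exactly: the strict comparison $c<c_p$ obtained by testing with a Nehari-rescaled multiple of the periodic ground state at a vertex where $V(x_0)<V_p(x_0)$ and $u_p(x_0)\neq 0$, boundedness of the $(PS)$ sequence from the identity $2J(u_n)-\langle J'(u_n),u_n\rangle=\|u_n\|_2^2$ plus the equivalence of the $H^1$ and $\ell^2$ norms, and the exclusion of vanishing by transferring the $(PS)$ property to $J_p$ and translating by multiples of $T$ to produce a nontrivial critical point of $J_p$ at level $\le c<c_p$ (this is the paper's Lemma 5.1). That part is sound.

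The sign statement is where you diverge, and there is a genuine gap. You propose to replace $u$ by $\abs{u}$, using $\abs{\nabla\abs{u}}\le\abs{\nabla u}$ to get $J(\abs{u})\le J(u)$. This inequality is correct on the graph, and it does show that (after fibering) $\abs{u}$ minimizes $J$ on the Nehari set $\mathcal{N}$ at the level $c_{\mathcal N}$. But it does not show that $\abs{u}$ is a \emph{critical point} of $J$: the functional is only lower semicontinuous and nonsmooth (the $\Psi$ part is not $C^1$), so "minimizer on $\mathcal{N}$ implies solution" is not automatic here and is precisely the step your argument omits. Moreover, the zero-propagation step ($u(x_0)=0$ forces $\sum_{y\sim x_0}u(y)=0$, hence $u\equiv 0$) only works once you already know $u$ has one sign; equality in $J(\abs{u})\le J(u)$ only gives $u(x)u(y)\ge 0$ across edges, which still permits sign changes through zeros. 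The paper instead proves directly that the solution it has constructed cannot have both $u^{+}\neq 0$ and $u^{-}\neq 0$: it uses the graph inequality $\abs{u(x)-u(y)}^2\ge u(x)^2+u(y)^2$ when $u(x)u(y)\le 0$ to get $J(u)\ge J(u^{+})+J(u^{-})$, then projects $u^{\pm}$ onto $\mathcal{N}$ with parameters $t_{\pm}>1$ and derives the strict inequality $J(u)<J(u^{+})+J(u^{-})$, a contradiction; only then does it apply the zero-propagation argument. (See also the paper's Remark 4.4, which explains why the continuous-case sign arguments must be modified on graphs.) To close your version you would either have to justify that Nehari minimizers of this nonsmooth functional are critical points, or switch to the paper's decomposition argument.
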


For the potential well case, in fact, we can regard it as a special case of asymptotic periodic potential.
\begin{theorem}\label{theo140}
 Assume that $G=(\mathbb{V},\mathbb{E})$ be the lattice graph $\mathbb{Z}^N$. If the potential $V$ satisfies the following assumption
 \begin{enumerate}[label=($V_2$), ref=$V_\arabic{*}$]
\item\label{V2}
$$
-1 \leq \inf _{x\in\mathbb{Z}^N} V(x) \leq \sup _{x\in{Z}^N} V(x)=V_{\infty}<\infty \quad \text { with } \quad V_{\infty}=\lim _{|x| \rightarrow \infty} V(x)
$$
\end{enumerate}
then problem \eqref{loga} has a ground state solution $u$. Moreover, $u(x) > 0$ for all $x \in \mathbb{Z}^N$ or $u(x) < 0$ for all $x \in \mathbb{Z}^N$.
\end{theorem}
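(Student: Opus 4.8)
The plan is to realize Theorem \ref{theo140} as a special instance of Theorem \ref{theo130}, exactly as the remark preceding the statement suggests. The only genuinely new content is to check that assumption \ref{V2} implies assumption \ref{V1} for a suitably chosen periodic comparison potential; once this is done, the existence of a constant-sign ground state follows verbatim from Theorem \ref{theo130}.

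First I would take the comparison potential to be the constant $V_p(x) \equiv V_\infty$. A constant function is trivially $T$-periodic for every $T \in \mathbb{N}$, so it is an admissible choice in \ref{V1}. The two structural requirements of \ref{V1} are then immediate: the inequality $V(x) \le V_\infty = V_p(x)$ holds for all $x$ because $V_\infty = \sup_{x} V(x)$ by \ref{V2}, and the decay condition $|V(x)-V_p(x)| = |V(x)-V_\infty| \to 0$ as $|x|\to\infty$ is precisely the hypothesis $V_\infty = \lim_{|x| \to \infty} V(x)$ contained in \ref{V2}.

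The one point that is not literally covered is the lower bound: \ref{V1} demands the strict inequality $-1 < \inf_x V(x)$, whereas \ref{V2} only provides $-1 \le \inf_x V(x)$, so the boundary case $\inf_x V(x) = -1$ needs separate treatment. Here I would invoke the scaling reduction already recorded in \eqref{eqlambda}: the substitution $u = \lambda v$ transforms \eqref{loga} into the same equation with potential $\widehat V(x) = V(x) - \log\lambda^2$, and a short computation yields $J(\lambda v) = \lambda^2 \widehat J(v)$, where $\widehat J$ is the functional associated with $\widehat V$. Choosing any $\lambda \in (0,1)$ makes $-\log\lambda^2 > 0$, so $\widehat V$ is a constant upward shift of $V$; this preserves \ref{V2} (with $V_\infty$ replaced by $V_\infty - \log\lambda^2$) while forcing $\inf_x \widehat V(x) = -1 - \log\lambda^2 > -1$. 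Since the bijection $u \leftrightarrow v$ sends solutions to solutions, scales energies by the positive constant $\lambda^2$, and preserves sign, it carries ground states to ground states. Thus it suffices to solve the problem for $\widehat V$, which now satisfies \ref{V1} with a strict lower bound.

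I do not anticipate a serious obstacle here: after \ref{V1} is verified for $V_p \equiv V_\infty$ (following the harmless rescaling in the boundary case), Theorem \ref{theo130} directly supplies a ground state $u$ together with the assertion that $u$ is everywhere positive or everywhere negative, which is exactly the claim. The only detail deserving care is confirming that the notion of ground state is genuinely preserved under $u = \lambda v$ — that is, that minimizing $J$ over nontrivial solutions is equivalent to minimizing $\widehat J$ — and this follows from the identity $J(\lambda v) = \lambda^2 \widehat J(v)$ together with $\lambda^2 > 0$.
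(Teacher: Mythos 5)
Your proposal matches the paper's proof: the paper likewise takes $V_p\equiv V_\infty$, observes it is ($1$-)periodic, verifies the two conditions of (\ref{V1}) directly from (\ref{V2}), and invokes Theorem \ref{theo130}. Your extra treatment of the boundary case $\inf_x V(x)=-1$ via the rescaling \eqref{eqlambda} is a welcome bit of care that the paper's proof silently absorbs into its standing assumption $\inf_{x\in\mathbb{V}}V(x)>-1$, but it does not change the argument.
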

The  above results can be extended to a quasi-transitive graph  $G$. $G$ is called a quasi-transitive graph if there are finitely many orbits for the action of $Aut(G)$ on $G$ where $Aut(G)$ is the set of automorphisms of $G$.
\begin{theorem}\label{theo150}
\label{theoquasi}
Let $G = (\mathbb{V},\mathbb{E})$ be a quasi-transitive graphs. Suppose $\Gamma \leq Aut(G)$ and the
action of $\Gamma$ on G has finitely many orbits. Then, by replacing $T$-periodicity of $V$ with $\Gamma$-invariance, Theorem \ref{theo120} holds on $G$. By replacing $\abs{x}$ with $d(x,x_0)$ for some $x_0 \in \mathbb{V}$, Theorem \ref{theo130} and \ref{theo140} holds on $G$.
\end{theorem}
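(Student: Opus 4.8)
The plan is to revisit the proofs of Theorems \ref{theo120}, \ref{theo130} and \ref{theo140} and isolate exactly where the special structure of $\mathbb{Z}^N$ — namely the cocompact action of the translation group $T\mathbb{Z}^N$ and the $T$-periodicity of $V$ — was used, then recognize each such use as an instance of a mechanism available on any quasi-transitive graph. The only three features of $(\mathbb{Z}^N, T\mathbb{Z}^N)$ that the arguments actually exploit are: (i) the ambient group acts by graph automorphisms, so that it preserves $\Delta$ and, together with the invariance of $V$, leaves the functional $J$ invariant; (ii) the action has a finite fundamental domain, so that any vertex can be carried into a fixed finite set by a group element; and (iii) the vertex degree is uniformly bounded. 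On a quasi-transitive graph with $\Gamma \le \mathrm{Aut}(G)$ acting with orbit representatives $o_1,\dots,o_k$, feature (i) holds by $\Gamma$-invariance of $V$, feature (ii) holds with $\{o_1,\dots,o_k\}$ playing the role of the $T$-cube, and feature (iii) follows from local finiteness together with finiteness of the orbits, since the degree function is $\Gamma$-invariant and hence takes only finitely many values.

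For the $\Gamma$-invariant analogue of Theorem \ref{theo120} I would run the identical variational scheme: produce a bounded $(PS)$ sequence $\{u_n\}$ at the ground-state level $c>0$, where boundedness in $H^1(\mathbb{V})=\ell^2(\mathbb{V})$ again comes from the boundedness of $V$ and the equivalence of these two spaces, so that no logarithmic Sobolev inequality is needed. The discrete non-vanishing alternative — if $\sup_x|u_n(x)|\to 0$ then $u_n$ carries no mass and $c=0$, a contradiction — is a purely $\ell^p$-summation statement and transfers verbatim, using only the uniform degree bound. Hence there are $\delta>0$ and vertices $x_n$ with $|u_n(x_n)|\ge\delta$. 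Choosing $\gamma_n\in\Gamma$ with $\gamma_n^{-1}x_n\in\{o_1,\dots,o_k\}$ and passing to a subsequence so that this representative is a fixed $o_j$, the translated sequence $\tilde u_n:=u_n\circ\gamma_n$ is still a bounded $(PS)$ sequence for the $\Gamma$-invariant functional with $|\tilde u_n(o_j)|\ge\delta$. A pointwise/weak limit $u\not\equiv 0$ then exists and is a nontrivial critical point at level $c$. The positivity dichotomy follows, as on $\mathbb{Z}^N$, by replacing $u$ with $|u|$ and invoking the strong maximum principle valid on any connected graph: if a nonnegative solution vanishes at a vertex, the equation forces $\sum_{y\sim x_0}u(y)=0$ and hence $u\equiv 0$ at all neighbours, and connectedness propagates this to $u\equiv 0$.

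For Theorems \ref{theo130} and \ref{theo140} the single structural change is that the decay hypothesis ``$|x|\to\infty$'' must be read as ``$d(x,x_0)\to\infty$'', which is the intrinsic notion of escaping to infinity on a graph. With this reading, the limit problem — governed by the $\Gamma$-invariant potential $V_p$, respectively by the constant $V_\infty$, which is trivially $\Gamma$-invariant — is itself solvable by the already-extended Theorem \ref{theo120}, and the comparison of the ground-state level $c$ of \eqref{loga} with the limit level $c_p$ carries over: the inequality $c\le c_p$ follows from $V\le V_p$ exactly as on $\mathbb{Z}^N$, while the non-vanishing-plus-$\Gamma$-translation argument rules out loss of compactness and yields an attained ground state.

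The main obstacle, and the only point demanding genuine care, is confirming that the translation-to-a-fundamental-domain step is fully legitimate on a quasi-transitive graph. Concretely, one must check that each automorphism $\gamma$ preserves both the norm $\|\cdot\|$ and the nonlinear term $\int_{\mathbb{V}}u^2\log u^2\,d\mu$ — true because $\gamma$ is a measure-preserving bijection preserving the edge set — so that $J$ and the $(PS)$ condition are genuinely $\Gamma$-invariant, and that finiteness of the orbits really substitutes for a bounded fundamental domain in the non-vanishing step. Once these are verified, every estimate from the $\mathbb{Z}^N$ proofs transfers with $T\mathbb{Z}^N$ replaced by $\Gamma$ and $|x|$ replaced by $d(x,x_0)$.
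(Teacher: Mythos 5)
Your transfer mechanism is exactly the one the paper uses: its proof of Theorem \ref{theo150} is a short sketch that fixes orbit representatives $\Omega=\{z_1,\dots,z_m\}$ and replaces the translations $x\mapsto x+k_nT$ by automorphisms $g_n\in\Gamma$ carrying the concentration points $x_n$ into $\Omega$, and your points (i)--(iii) ($\Gamma$-invariance of $J$ and of the $(PS)$ property, finitely many representatives substituting for the cube $[0,T)^N\cap\mathbb{Z}^N$, uniform degree bound because the degree is $\Gamma$-invariant and there are finitely many orbits) are precisely the facts that make this sketch legitimate. So the existence part of your proposal is correct and coincides with the paper's route.

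There is, however, a genuine gap in your treatment of the sign dichotomy, and it is not a cosmetic one. ``Replacing $u$ with $|u|$ and invoking the strong maximum principle'' fails on graphs for two reasons. First, if $u$ changes sign then $|u|$ is \emph{not} a solution of the equation (at a vertex where $u(x)>0$ one only gets the subsolution inequality $-\Delta|u|(x)+V(x)|u(x)|\le |u(x)|\log|u(x)|^2$), so the equation-based propagation cannot be applied to $|u|$; and applying it to $u$ itself at a zero $x_0$ only yields $\sum_{y\sim x_0}u(y)=0$, which forces nothing unless $u$ is already known to be one-signed. Second, even if one knew $u(x)\neq 0$ for every $x$, on a graph this does not imply constant sign: there is no intermediate value theorem, and a function may jump from positive to negative across an edge without vanishing. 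This is exactly the discrete pathology discussed in Remark \ref{positi}. The argument that does transfer verbatim to quasi-transitive graphs is the paper's: first rule out sign changes using $J(u)\ge J(u^+)+J(u^-)$ (valid since $|u(x)-u(y)|^2\ge u(x)^2+u(y)^2$ whenever $u(x)u(y)\le 0$) together with the Nehari scaling identity \eqref{eqnehari}, which gives $t_\pm>1$ and hence the contradiction $J(u)<J(u^+)+J(u^-)$ when $u^+\neq 0\neq u^-$; only then apply the vanishing-propagation step to the one-signed solution. Finally, note that this is needed not just for the sign statement itself: in the asymptotically periodic and potential-well cases, the strict level inequality $c_{\mathcal{N}}<c_p$ of Lemma \ref{Lemasy}(i) is proved by evaluating $V_p-V$ against $|u_p(x_0)|^2$ at a point $x_0$ where $V(x_0)<V_p(x_0)$, so it relies on the periodic ground state being nonzero \emph{everywhere}; with your version of the sign argument this step, and hence the exclusion of escape to infinity, would be unsupported.
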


The rest of the paper is organized as follows.  In Section 2  we give some
preliminaries on graphs. We build the logarithmic Sobolev inequality on lattice graphs which is useful in the proof of boundedness of  $(PS)$ sequence when the potential is coercive, and we also provide a novel proof for the boundedness of $(PS)$ sequence when the potential is bounded. Moreover, we also summarize some pertinent results and definitions related the logarithmic Schr\"{o}dinger equations, mainly taken from \cite{JS, SS}. In Section 3, we give the proof of Theorem \ref{theo110}.  In Section 4, we prove Theorem \ref{theo120}.  In Section 5, the proofs of Theorem \ref{theo130}, \ref{theo140} and \ref{theo150} are provided. In the final section 6, we briefly sketch how the results of Theorems \ref{theo110}-\ref{theo150} can be generalized to an equation involving the $p$-Laplacian with logarithmic nonlinearity.

\section{The variational framework and some preliminaries}\label{secframe}
 Throughout this paper, let $G$ be a uniformly locally finite graph, i.e.,
$$
\#\{y \in \mathbb{V}: y \sim x\} \leq C, \quad \forall x \in \mathbb{V},
$$
where $C$ is a uniform constant. It is clear that $\mathbb{Z}^N$ is a special case of uniformly locally finite graph with a uniform constant $2N$.

For any function $u, v \in C(\mathbb{V})$, we define the associated gradient form as
$$
\Gamma(u, v)(x):= \frac{1}{2} \sum_{y \sim x} (u(y)-u(x))(v(y)-v(x)) .
$$
Let $\Gamma(u)=\Gamma(u, u)$ and
$$
|\nabla u|(x):=\sqrt{\Gamma(u)(x)}=\left( \frac{1}{2} \sum_{y \sim x}(u(y)-u(x))^2\right)^{1 / 2} .
$$

We shall work in Hilbert space
$$
X=\left\{u \in H^1\left(\mathbb{V}\right) ; \int_{\mathbb{V}} (V(x)+1)u^2 d\mu<\infty\right\}
$$
with the inner product given by
$$
\langle u, v\rangle:=\int_{\mathbb{V}} \Gamma(u, v)+(V(x)+1)u v d \mu .
$$
If the potential $V$ is coercive, $X$ is a proper subspace of $H^{1}\left(\mathbb{V}\right)$ and compactly embedded in $\ell^{p}\left(\mathbb{V}\right)$ for all $2 \leq p \leq \infty$, see the details in \cite[Lemma 2.2]{Gri1}.

If the potential function $V: \mathbb{V} \rightarrow \mathbb{R}$ is bounded uniformly, and  there exist $C_1, C_2>-1$, such that
$$
C_1 \leq V(x) \leq C_2, \quad \forall x \in \mathbb{V},
$$
it is easy to show that the spaces $H^1\left(\mathbb{V}\right)$ and $X$ are equivalent. If $G$ is a uniformly locally finite graph, we may show that the space $H^1(\mathbb{V})$ is equivalent to $\ell^2(\mathbb{V})$.  Therefore, spaces $H^1\left(\mathbb{V}\right)$, $X$ and $\ell^2(\mathbb{V})$ are pairwise equivalent to each other.
\begin{lemma}\label{equi1}
 The spaces $H^1(\mathbb{V})$ and $\ell^2(\mathbb{V})$ are equivalent.
\end{lemma}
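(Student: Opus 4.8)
The plan is to prove equivalence by comparing the two norms on the common dense subspace $C_c(\mathbb{V})$. Since $H^1(\mathbb{V})$ is by definition the completion of $C_c(\mathbb{V})$ under $\|\cdot\|$, while $C_c(\mathbb{V})$ is also dense in $\ell^2(\mathbb{V})$, it suffices to exhibit constants $c_1,c_2>0$ with $c_1\|u\|_2\le\|u\|\le c_2\|u\|_2$ for every $u\in C_c(\mathbb{V})$: once the two norms are comparable on $C_c(\mathbb{V})$, their completions coincide as sets and carry equivalent norms. Because each such $u$ has finite support, all the sums below are finite and the manipulations are justified without any convergence subtleties.

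The lower bound is immediate. As the gradient term $\frac12\sum_{x}\sum_{y\sim x}(u(y)-u(x))^2$ is nonnegative, the very definition of $\|\cdot\|$ gives $\|u\|^2\ge\sum_{x}u^2(x)=\|u\|_2^2$, so one may take $c_1=1$. For the upper bound I would estimate the gradient term against $\|u\|_2^2$ by means of the elementary inequality $(a-b)^2\le 2a^2+2b^2$, namely
\[
\frac12\sum_{x\in\mathbb{V}}\sum_{y\sim x}(u(y)-u(x))^2
\le\sum_{x\in\mathbb{V}}\sum_{y\sim x}\bigl(u^2(y)+u^2(x)\bigr).
\]
At this point the uniform local finiteness hypothesis $\#\{y:y\sim x\}\le C$ enters decisively. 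On one hand, $\sum_{x}\sum_{y\sim x}u^2(x)=\sum_{x}\deg(x)\,u^2(x)\le C\|u\|_2^2$. On the other hand, interchanging the order of summation and counting, for each fixed $y$ the number of vertices $x$ with $x\sim y$ equals $\deg(y)\le C$, so $\sum_{x}\sum_{y\sim x}u^2(y)=\sum_{y}\deg(y)\,u^2(y)\le C\|u\|_2^2$. Combining the two, the gradient term is bounded by $2C\|u\|_2^2$, and hence $\|u\|^2\le(2C+1)\|u\|_2^2$, giving $c_2=\sqrt{2C+1}$.

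With both inequalities established on $C_c(\mathbb{V})$, the completions agree and the lemma follows. The only genuine content is the upper bound: the real obstacle is controlling the gradient energy by the $\ell^2$ mass, and this is exactly where bounded degree is indispensable. On a graph of unbounded degree the factor $\deg(\cdot)$ could not be absorbed into a uniform constant, and the equivalence would break down; the hypothesis that $G$ is uniformly locally finite (with $\mathbb{Z}^N$ satisfying it for $C=2N$) is therefore not merely convenient but essential.
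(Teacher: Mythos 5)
Your proof is correct and follows essentially the same route as the paper: the identical elementary inequality $(a-b)^2\le 2a^2+2b^2$ combined with the uniform degree bound $C$, yielding the same constants $1$ and $\sqrt{2C+1}$. The only difference is cosmetic — you compare the norms on the dense subspace $C_c(\mathbb{V})$ and pass to completions, which is in fact slightly more careful about what ``equivalent'' means, whereas the paper estimates directly for $u\in\ell^2(\mathbb{V})$.
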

\begin{proof}
Let $u \in \ell^2(\mathbb{V})$. For any $x \in \mathbb{V}$, we have
\[
    \sum_{y \sim x} \abs{u(y) - u(x)}^2 \leqslant 2C \abs{u(x)}^2 + 2 \sum_{y \sim x}\abs{u(y)}^2.
\]
Thus,
\[
    \int_{\mathbb{V}}{\abs{\nabla u}^2} \, d\mu = \frac{1}{2}\sum_{x \in \mathbb{V}}\sum_{y \sim x} \abs{u(y) - u(x)}^2 \leqslant C \sum_{x \in \mathbb{V}}\abs{u(x)}^2 + \sum_{x \in \mathbb{V}}\sum_{y \sim x} \abs{u(y)}^2 \leq 2C \sum_{x \in \mathbb{V}} \abs{u(x)}^2.
\]
We conclude that $u \in H^1(\mathbb{V})$ and
\[
     \norm{u}_2 \leq \norm{u} \leq \sqrt{2C+1}\norm{u}_2.
\]
\end{proof}
The following embedding is very important for our problem, or the detail of proof, please refer to \cite[Lemma 2.1]{Gri1}.
\begin{lemma}\label{lemembed}
$X$ is continuously embedded into $\ell^q(\mathbb{V})$ for any $q \in[2, \infty]$. Moreover, for any bounded sequence $\left\{u_k\right\} \subset X$, there exists $u \in X$ such that, up to a subsequence,
$$
\begin{cases}u_k \rightharpoonup u & \text { in } \quad X, \\ u_k(x) \rightarrow u(x) & \forall x \in \mathbb{V}, \\ u_k \rightharpoonup u & \text { in } \quad \ell^q(\mathbb{V}) .\end{cases}
$$
\end{lemma}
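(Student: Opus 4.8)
The plan is to reduce the embedding to the elementary inclusions between $\ell^p$-spaces under the counting measure, and then to extract all three convergences from the Hilbert-space structure of $X$ together with the continuity of point evaluations. Throughout, write $\norm{u}_X := \langle u,u\rangle^{1/2}$ for the norm of $X$.

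First I would record the pointwise bound $\abs{u(x)} \leq \norm{u}_2$ for every $x \in \mathbb{V}$, which holds because $\abs{u(x)}^2 \leq \sum_{y \in \mathbb{V}} \abs{u(y)}^2$ under the counting measure; taking the supremum in $x$ gives $\norm{u}_\infty \leq \norm{u}_2$. Combining this with the interpolation estimate $\norm{u}_q^q = \sum_{x} \abs{u(x)}^{q-2}\abs{u(x)}^2 \leq \norm{u}_\infty^{q-2}\norm{u}_2^2$, valid for $q \in [2,\infty)$, yields $\norm{u}_q \leq \norm{u}_2$ for every $q \in [2,\infty]$, i.e. $\ell^2(\mathbb{V}) \hookrightarrow \ell^q(\mathbb{V})$. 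Since $\inf_{\mathbb{V}}(V+1) > 0$ by our standing assumption, one has $\norm{u}_2^2 \leq (\inf_{\mathbb{V}}(V+1))^{-1}\int_{\mathbb{V}}(V+1)u^2\,d\mu \leq (\inf_{\mathbb{V}}(V+1))^{-1}\norm{u}_X^2$, and composing the two estimates gives the continuous embedding $X \hookrightarrow \ell^q(\mathbb{V})$.

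For the convergence part, let $\{u_k\}$ be bounded in $X$. As $X$ is a Hilbert space, reflexivity furnishes a subsequence (not relabeled) and a limit $u \in X$ with $u_k \rightharpoonup u$ in $X$. The decisive observation for the pointwise statement is that, by the bound $\abs{u(x)} \leq \norm{u}_2 \leq C\norm{u}_X$ just established, the evaluation $u \mapsto u(x)$ is a bounded linear functional on $X$ for each fixed $x$; hence weak convergence in $X$ forces $u_k(x) \to u(x)$ for every $x \in \mathbb{V}$.

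Finally, the embedding makes $\{u_k\}$ bounded in each $\ell^q(\mathbb{V})$. For any $\phi \in \ell^{q'}(\mathbb{V})$ of finite support, $\sum_{x} u_k(x)\phi(x) \to \sum_{x} u(x)\phi(x)$ is merely a finite sum controlled by the pointwise convergence; since finitely supported functions are dense in $\ell^{q'}(\mathbb{V})$ for $q \in (1,\infty)$ and $\{u_k\}$ is $\ell^q$-bounded, a standard $\varepsilon/3$ density argument upgrades this to $u_k \rightharpoonup u$ in $\ell^q(\mathbb{V})$ (weak-$*$ when $q = \infty$). The one point deserving attention is that all three limits must be the \emph{same} $u$; this is exactly what the continuity of the point evaluations guarantees, since the pointwise limit pins down the weak $\ell^q$-limit uniquely and no separate uniqueness argument is needed. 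The main obstacle is thus less a hard estimate than the careful bookkeeping needed to identify a single limit $u$ across the three topologies.
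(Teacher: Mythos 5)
Your argument is correct and is exactly the standard proof of this embedding; the paper itself does not prove the lemma but defers to \cite[Lemma 2.1]{Gri1}, where the same chain of estimates ($\|u\|_\infty\le\|u\|_2$, monotonicity of $\ell^q$-norms under the counting measure, $\|u\|_2\le C\|u\|_X$ from $\inf_{\mathbb{V}}(V+1)>0$, boundedness of point evaluations, and testing against finitely supported functions) is used. The only caveat worth keeping is the one you already flagged: for $q=\infty$ the last convergence should be read in the weak-$*$ sense.
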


To prove Theorem \ref{theo110}, the following logarithmic Sobolev inequality  is crucial for proving the boundedness of $(PS)$ sequences when the potential is coercive.

\begin{lemma}\label{logaine2}
For all $u \in \ell^2(\mathbb{V})$ such that $\norm{u}_2 = 1$, there holds
    \begin{equation}
    \label{eqlogineq2}
        \int_{\mathbb{V}}{u^2\log{u^2}}\, d\mu \leq 0.
    \end{equation}
\end{lemma}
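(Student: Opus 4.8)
The plan is to exploit the one feature that distinguishes the discrete setting from the continuous one: under the counting measure, the $\ell^2$ normalization constrains each pointwise value individually, which by itself fixes the sign of the logarithm. Concretely, I would first observe that since $\norm{u}_2^2 = \sum_{x \in \mathbb{V}} u(x)^2 = 1$ and every summand is nonnegative, each term must satisfy $0 \le u(x)^2 \le 1$ for every $x \in \mathbb{V}$.

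From this pointwise bound the sign of each summand follows immediately. For every vertex $x$ with $u(x) \ne 0$ we have $0 < u(x)^2 \le 1$, hence $\log u(x)^2 \le 0$ and therefore $u(x)^2 \log u(x)^2 \le 0$. At the vertices where $u(x) = 0$ I would adopt the usual convention $0 \log 0 = 0$, justified by $t \log t \to 0$ as $t \to 0^+$, so that the corresponding summand vanishes. Thus every term of the series $\sum_{x \in \mathbb{V}} u(x)^2 \log u(x)^2$ is nonpositive, and summing over all $x$, recalling that $\int_{\mathbb{V}} f \, d\mu = \sum_{x \in \mathbb{V}} f(x)$ under the counting measure, yields $\int_{\mathbb{V}} u^2 \log u^2 \, d\mu \le 0$.

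The only point that warrants a word of care is the interpretation of the sum. Since each term is nonpositive, the series converges either to a finite nonpositive value or diverges to $-\infty$; in both cases the asserted inequality holds, with the understanding that $-\infty \le 0$. (Indeed, divergence to $-\infty$ genuinely occurs for suitable $u \in \ell^2(\mathbb{V})$, consistent with the earlier remark that $\int_{\mathbb{V}} u^2 \log u^2 \, d\mu$ may equal $-\infty$.) There is thus no real analytic obstacle here; the essential — and perhaps striking — point is that, in sharp contrast to the continuous case on $\mathbb{R}^N$ where a genuine logarithmic Sobolev inequality with a gradient term is required, on a graph equipped with the counting measure the normalization $\norm{u}_2 = 1$ already forces $u(x)^2 \le 1$ and hence controls the logarithmic term pointwise, so the inequality reduces to a termwise sign comparison.
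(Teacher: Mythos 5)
Your proof is correct and follows exactly the paper's argument: the counting measure forces $u(x)^2 \le \norm{u}_2^2 = 1$ at every vertex, so each summand $u(x)^2\log u(x)^2$ is nonpositive and the sum is $\le 0$. Your additional remarks on the convention $0\log 0 = 0$ and on the possible divergence to $-\infty$ are sensible refinements of the same termwise argument, not a different route.
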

\begin{proof}
Let  $u \in \ell^2(\mathbb{V})$ such that $\norm{u}_2 = 1$. Since $\abs{u(x)}^2 \leq \norm{u}_2^2 = 1$ for all $x \in \mathbb{V}$, we have $u^{2}(x)\log{u^2(x)} \leq 0$ for any $x \in \mathbb{V}$ and $u(x)\neq 0$.  Thus
\[
    \int_{\mathbb{V}}{u^2\log{u^2}}\, d\mu \leq 0.
\]
\end{proof}

It follows from \eqref{eqlogineq2} that, for any $u \in \ell^2(\mathbb{V})$ with $\norm{u}_2 \neq 0$, we have
\begin{equation}
        \label{eqlogineq3}
        \int_{\mathbb{V}} u^2\log{u^2}\, d\mu \leq \norm{u}_2^2\log{\norm{u}_2^2}.
\end{equation}

When the potential $V(x)$ is bounded in $\mathbb{V}$, it is not necessary to need the logarithmic Sobolev inequality in $\mathbb{V}$ to prove the boundedness of $(PS)$ sequences, here we only use the equivalence of spaces $\ell^2(\mathbb{V})$  and $X$. I think that this is a big difference between the the continuous case $\mathbb{R}^N$ and the problem in graphs.

\begin{lemma}\label{bounded}
Assume that the potential $V(x)$ is bounded and satisfies $\inf _{\mathbb{V}} V(x)>-1$. If $\left(u_n\right)$ is a sequence such that $\left(J\left(u_n\right)\right)$ is bounded above and $J^{\prime}\left(u_n\right) \rightarrow 0$, then $\left(u_n\right)$ is bounded. Moreover, since $\Psi \geq 0,\left(J\left(u_n\right)\right)$ is also bounded below and hence it is a $(PS)$ sequence.
\end{lemma}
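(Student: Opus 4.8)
The plan is to exploit the decomposition $J = \Phi + \Psi$ set up in Section~\ref{secframe} following \cite{SS, JS}, where $\Phi \in C^1(X)$, the functions $F_1,F_2$ satisfy $F_1(s)-F_2(s) = -\tfrac12 s^2\log s^2$, $F_2$ is of class $C^1$ with $0 \le F_2(s) \le C_M s^2$ on each bounded range $\abs{s}\le M$, and $\Psi(u)=\int_{\mathbb{V}}F_1(u)\,d\mu \ge 0$ is convex and lower semicontinuous. The heart of the matter is the algebraic identity obtained by cancelling the two (individually divergent) logarithmic integrals: formally,
$$2J(u) - \langle J'(u), u\rangle = \norm{u}_2^2,$$
which in the nonsmooth setting reads $2J(u) - \bigl(\langle\Phi'(u),u\rangle + \int_{\mathbb{V}} F_1'(u)u\,d\mu\bigr) = \norm{u}_2^2$. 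I would first verify this by direct computation, checking that $\int_{\mathbb{V}} u^2\log u^2\,d\mu$ enters $2J(u)$ and $\langle\Phi'(u),u\rangle + \int_{\mathbb{V}} F_1'(u)u\,d\mu$ with opposite signs and hence drops out, leaving only $\norm{u}_2^2$.

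Next I would show that this bracket is $o(\norm{u_n}_X)$ along the $(PS)$ sequence. Since $J$ is not $C^1$ I cannot differentiate directly; instead I use the Szulkin variational inequality defining a $(PS)$ sequence, testing it with the rescaled functions $v = (1\pm t)u_n$. Dividing by $t$ and letting $t\to 0^{\pm}$, the convexity of $F_1$ (monotonicity of the difference quotients of $\Psi(\,(1+t)u_n)$) lets me pass to the limit and obtain $\bigl\lvert\langle\Phi'(u_n),u_n\rangle + \int_{\mathbb{V}} F_1'(u_n)u_n\,d\mu\bigr\rvert \le \epsilon_n\norm{u_n}_X$ with $\epsilon_n\to 0$. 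Combined with the identity and the hypothesis $J(u_n)\le C$, this yields $\norm{u_n}_2^2 \le 2C + \epsilon_n\norm{u_n}_X$.

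The decisive, genuinely graph-specific step is then to convert this $\ell^2$ bound into a bound in $X$. Because $V$ is bounded with $\inf_{\mathbb{V}} V > -1$, the norms $\norm{\cdot}_X$, the $H^1$-norm and $\norm{\cdot}_2$ are all equivalent (Lemma~\ref{equi1} together with the equivalence of $H^1(\mathbb{V})$ and $X$), so $\norm{u_n}_X \le K\norm{u_n}_2$ for a fixed constant $K$. Substituting gives the quadratic inequality $\norm{u_n}_2^2 \le 2C + K\epsilon_n\norm{u_n}_2$, whence $\norm{u_n}_2$, and therefore $\norm{u_n}_X$, is bounded. This is precisely the place where the continuous theory must invoke the logarithmic Sobolev inequality to control the gradient; on the graph the norm equivalence makes it automatic.

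Finally, for the ``moreover'' assertion I would use $\Psi \ge 0$ to write $J(u_n) \ge \Phi(u_n) = \tfrac12\norm{u_n}_X^2 - \int_{\mathbb{V}}F_2(u_n)\,d\mu \ge -\int_{\mathbb{V}} F_2(u_n)\,d\mu$. Since $\norm{u_n}_2 \le M$ forces $\abs{u_n(x)} \le M$ for every $x\in\mathbb{V}$, the bound $F_2(s)\le C_M s^2$ gives $\int_{\mathbb{V}} F_2(u_n)\,d\mu \le C_M\norm{u_n}_2^2 \le C_M M^2$, so $J(u_n)$ is bounded below and $(u_n)$ is a genuine $(PS)$ sequence. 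I expect the main obstacle to be the rigorous justification of the limit $t\to 0$ in the second step — legitimately replacing the difference quotients of $\Psi$ by $\int_{\mathbb{V}} F_1'(u_n)u_n\,d\mu$ — which must proceed through the convexity and monotone-convergence argument rather than naive differentiation, since $\Psi$ is finite only on a proper subset of $X$.
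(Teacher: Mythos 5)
Your proposal is correct and follows essentially the same route as the paper: the identity $2J(u_n)-\langle J'(u_n),u_n\rangle=\norm{u_n}_2^2$, the estimate $\norm{u_n}_2^2\le C+o(1)\norm{u_n}$, and the equivalence of $\norm{\cdot}$ and $\norm{\cdot}_2$ (Lemma~\ref{equi1}) to close the quadratic inequality. The only difference is that you spell out, via the test functions $v=(1\pm t)u_n$ in the Szulkin inequality, the justification that the pairing $\langle J'(u_n),u_n\rangle$ is $o(1)\norm{u_n}$, a point the paper's two-line proof takes for granted from the framework of Lemma~\ref{sub1}; your treatment of the lower bound via $\Psi\ge 0$ and $F_2(s)\le C_M s^2$ likewise matches what the paper leaves implicit.
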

\begin{proof}
Let $\left(u_n\right) \subset X$ be a sequence with $\left(J\left(u_n\right)\right)$ bounded above and $J^{\prime}\left(u_n\right) \rightarrow 0$ as $n \rightarrow \infty$. Then, choosing $v=u_n$ as test function, we have
\begin{equation}
\label{eqcontroll2}
    \norm{u_n}_2^2 = \int_{\mathbb{V}} {u_n^2} \, d\mu = 2 J\left(u_n\right)-\left\langle J^{\prime}\left(u_n\right), u_n\right\rangle \leq C+o_{n}(1)\norm{u_n} \leqslant  C+o_{n}(1)\norm{u_n}_2, \quad \text { as } n \rightarrow \infty.
\end{equation}
where we use the fact that the spaces $\ell^2(\mathbb{V})$ and $X$ are equivalent, proving the first assertion. Then the second assertion immediately follows.
\end{proof}

Following the approach explored in \cite{JS, SS}, due to the lack of smoothness of $J$, let us decompose it into a sum of a $C^1$ functional and a convex lower semicontinuous functional. For $\delta>0$, let us define the following functions:
$$
F_1(s)= \begin{cases}0, & s=0 \\ -\frac{1}{2} s^2 \log s^2 & 0<|s|<\delta \\ -\frac{1}{2} s^2\left(\log \delta^2+3\right)+2 \delta|s|-\frac{1}{2} \delta^2, & |s| \geq \delta\end{cases}
$$
and
$$
F_2(s)=\frac{1}{2} s^2 \log s^2+F_1(s), \quad s \in \mathbb{R} .
$$

It was proved in \cite{JS, SS} that $F_1$ and $F_2$ verify the following properties:
$$
F_1, F_2 \in C^1(\mathbb{R}, \mathbb{R}) \text {. }
$$
If $\delta>0$ is small enough, $F_1$ is convex, even, $F_1(s) \geq 0$ for all $s \in \mathbb{R}$ and
\begin{equation}\label{use1}
F_1^{\prime}(s) s \geq 0, s \in \mathbb{R} .
\end{equation}

Fix $p \in (2, +\infty)$, there exists $C>0$ such that
\begin{equation}\label{use2}
\left|F_2^{\prime}(s)\right| \leq C|s|^{p-1}, \quad \forall s \in \mathbb{R} .
\end{equation}

Now, we define
$$
\Phi(u)=\frac{1}{2} \int_{\mathbb{V}}\left(|\nabla u|^2+(V(x)+1)\right)|u|^2) d \mu-\int_{\mathbb{V}} F_2(u) d \mu,
$$
and
$$
\Psi(u)=\int_{\mathbb{V}} F_1(u) d \mu,
$$
then
\begin{equation}\label{add1}
J(u)=\Phi(u)+\Psi(u), \quad u \in X.
\end{equation}

Using the above information, it follows that $\Phi \in C^1\left(X, \mathbb{R}\right), \Psi$ is convex and lower semicontinuous, but $\Psi$ is not a $C^1$ functional, in fact, it is not continuous in $X$. Due to this fact, we will look for a critical point in the sub-differential. Here we state some definitions that can be found in \cite{S}.

\begin{definition}
Let $E$ be a Banach space, $E^{\prime}$ be the dual space of $E$ and $\langle\cdot, \cdot\rangle$ be the duality paring between $E^{\prime}$ and $E$. Let $J: E \rightarrow \mathbb{R}$ be a functional of the form $J(u)=\Phi(u)+\Psi(u)$, where $\Phi \in C^1(E, \mathbb{R})$ and $\Psi$ is convex and lower semicontinuous. Let us list some definitions:\\
(i) The sub-differential $\partial J(u)$ of the functional $J$ at a point $u \in E$ is the following set
$$
\left\{w \in E^{\prime}:\left\langle\Phi^{\prime}(u), v-u\right\rangle+\Psi(v)-\Psi(u) \geq\langle w, v-u\rangle, \forall v \in E\right\} .
$$
(ii) A critical point of $J$ is a point $u \in E$ such that $J(u)<+\infty$ and $0 \in \partial J(u)$, i.e.
$$
\left\langle\Phi^{\prime}(u), v-u\right\rangle+\Psi(v)-\Psi(u) \geq 0, \forall v \in E .
$$
(iii) A $(PS)$ sequence at level $d$ for $J$ is a sequence $\left(u_n\right) \subset E$ such that $J\left(u_n\right) \rightarrow d$ and there is a numerical sequence $\tau_n \rightarrow 0^{+}$with
\begin{equation}\label{eqps}
    \left\langle\Phi^{\prime}\left(u_n\right), v-u_n\right\rangle+\Psi(v)-\Psi\left(u_n\right) \geq-\tau_n\left\|v-u_n\right\|, \forall v \in E .
\end{equation}
(iv) The functional $J$ satisfies the $(PS)$ condition if all $(PS)$ sequences has a convergent subsequence.\\
(v) The effective domain of $J$ is the set $D(J)=\{u \in E: J(u)<+\infty\}$.
\end{definition}

To proceed further we gather and state below some useful results that leads to a better understanding of the problem and of its particularities. In what follows, for each $u \in D\left(J\right)$, we set the functional $J^{\prime}(u): C_c(\mathbb{V}) \rightarrow \mathbb{R}$ given by
$$
\left\langle J^{\prime}(u), z\right\rangle=\left\langle\Phi^{\prime}(u), z\right\rangle-\int_{\mathbb{V}} F_1^{\prime}(u) z d \mu, \quad \forall z \in C_c(\mathbb{V})
$$
and define
$$
\left\|J^{\prime}(u)\right\|=\sup \left\{\left\langle J^{\prime}(u), z\right\rangle: z \in C_c(\mathbb{V}) \text { and }\|z\| \leq 1\right\} .
$$

If $\left\|J^{\prime}(u)\right\|$ is finite, then $J^{\prime}(u)$ may be extended to a bounded operator in $X$, and so, it can be seen as an element of $X^{\prime}$.

\begin{lemma}\label{sub1}
 Let $J$ satisfy \eqref{add1}, then:\\
(i) If $u \in D\left(J\right)$ is a critical point of $J$, then
$$
\left\langle\Phi^{\prime}(u), v-u\right\rangle+\Psi(v)-\Psi(u) \geq 0, \quad \forall v \in X,
$$
or equivalently
$$
\begin{aligned}
& \int_{\mathbb{V}}\nabla u \nabla(v-u) d \mu+\int_{\mathbb{V}}(V(x)+1) u(v-u) d \mu+\int_{\mathbb{V}} F_1(v) d x-\int_{\mathbb{V}} F_1(u) d \mu \\
& \geq \int_{\mathbb{V}} F_2^{\prime}(u)(v-u) d \mu, \,\,\forall v \in X.
\end{aligned}
$$
(ii) For each $u \in D\left(J\right)$ such that $\left\|J^{\prime}(u)\right\|<+\infty$, we have $\partial J(u) \neq \emptyset$, that is, there is $w \in X^{\prime}$, which is denoted by $w=J^{\prime}(u)$, such that
$$
\left\langle\Phi^{\prime}(u), v-u\right\rangle+\int_{\mathbb{V}} F_1(v) d \mu-\int_{\mathbb{V}}F_1(u) d\mu \geq\langle w, v-u\rangle, \quad \forall v \in X, \quad(\operatorname{see} \text{\cite{SS}})
$$
(iii) If a function $u \in D\left(J\right)$ is a critical point of $J$, then $u$ is a solution of (1.3) [(i) in Lemma 2.4, \text{\cite{JS}}].\\
(iv) If $\left(u_n\right) \subset X$ is a $(PS)$ sequence, then
$$
\left\langle J^{\prime}\left(u_n\right), z\right\rangle=o_n(1)\|z\|, \quad \forall z \in C_c(\mathbb{V}).
$$
[see (ii) in Lemma 2.4, \text{\cite{JS}}].\\
(v) If $\Omega$ is a bounded domain, then $\Psi$ (and hence $J$) is of class $C^1$ in $H^1(\Omega)$
(Lemma 2.2 in\text{\cite{SS}}). More precisely, the functional
$$
\Psi(u)=\int_{\Omega} F_1(u) d \mu, \quad \forall u \in H^1(\Omega)
$$
belongs to $C^1\left(H^1(\Omega), \mathbb{R}\right)$.\\
(vi)  If $J(u_n)$ is bounded above, $J'(u_n) \to 0$ and $u_n \rightharpoonup u$, then $u$ is a critical point of $J$.
\end{lemma}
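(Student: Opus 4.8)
My plan is to establish the two equivalent characterizations in order: first show that the weak limit $u$ solves the equation tested against \emph{compactly supported} functions, and then promote this to the full variational inequality $\langle\Phi'(u),v-u\rangle+\Psi(v)-\Psi(u)\geq 0$ for all $v\in X$ (i.e. $0\in\partial J(u)$) by a truncation-plus-convexity argument. I would begin with the preliminaries. Since $u_n\rightharpoonup u$, the sequence $(u_n)$ is bounded in $X$, and Lemma \ref{lemembed} lets me assume $u_n(x)\to u(x)$ for every $x\in\mathbb{V}$. Boundedness in $X\hookrightarrow\ell^p(\mathbb{V})$ together with $\abs{F_2(s)}\leq C\abs{s}^p$ (a consequence of \eqref{use2}) shows $(\Phi(u_n))$ is bounded, and since $J(u_n)$ is bounded above and $\Psi\geq 0$ this forces $(\Psi(u_n))$ to be bounded. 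As $F_1\geq 0$ is continuous and $u_n(x)\to u(x)$, Fatou's lemma gives $\Psi(u)=\int_{\mathbb{V}}F_1(u)\,d\mu\leq\liminf_n\Psi(u_n)<\infty$, so that $u\in D(J)$, which is needed for $u$ to even be a candidate critical point.

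Next I would derive the weak identity. Fix $z\in C_c(\mathbb{V})$. Because $z$ has finite support and $G$ is locally finite, the quantity $\langle J'(u_n),z\rangle=\langle\Phi'(u_n),z\rangle+\int_{\mathbb{V}}F_1'(u_n)z\,d\mu$ is a finite sum whose terms depend continuously on the finitely many values $u_n(y)$ with $y$ in or adjacent to $\operatorname{supp}(z)$; since $F_1',F_2'$ are continuous and $u_n\to u$ pointwise, I get $\langle J'(u_n),z\rangle\to\langle J'(u),z\rangle$. On the other hand, $(u_n)$ being a $(PS)$ sequence, part (iv) gives $\langle J'(u_n),z\rangle=o_n(1)\norm{z}\to 0$. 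Hence
$$
\langle\Phi'(u),z\rangle+\int_{\mathbb{V}}F_1'(u)z\,d\mu=0,\qquad\forall\,z\in C_c(\mathbb{V}),
$$
which is the weak formulation of the equation against compactly supported test functions; I will call this identity $(\star)$.

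Finally I would upgrade $(\star)$ to the critical point inequality. Let $v\in X$; if $\Psi(v)=+\infty$ the inequality is trivial, so assume $\Psi(v)<\infty$. For a finite ball $B_R=\{x:d(x,x_0)\leq R\}$ set $z_R:=(v-u)\chi_{B_R}\in C_c(\mathbb{V})$. Inserting $z_R$ into $(\star)$ yields $\int_{B_R}F_1'(u)(v-u)\,d\mu=-\langle\Phi'(u),z_R\rangle$. Since $\Phi'(u)\in X'$ (the $F_2'$ contribution is controlled by Hölder via \eqref{use2}) and $z_R\to v-u$ strongly in $X$, the right-hand side tends to $-\langle\Phi'(u),v-u\rangle$. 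Meanwhile, convexity of $F_1$ gives the pointwise bound $F_1'(u(x))(v(x)-u(x))\leq F_1(v(x))-F_1(u(x))$, so summing over $B_R$,
$$
\int_{B_R}F_1'(u)(v-u)\,d\mu\leq\int_{B_R}\bigl(F_1(v)-F_1(u)\bigr)\,d\mu,
$$
and as $F_1\geq 0$ with $\Psi(v),\Psi(u)<\infty$, monotone convergence sends the right-hand side to $\Psi(v)-\Psi(u)$. Letting $R\to\infty$ I obtain $-\langle\Phi'(u),v-u\rangle\leq\Psi(v)-\Psi(u)$, that is, $\langle\Phi'(u),v-u\rangle+\Psi(v)-\Psi(u)\geq 0$ for every $v\in X$, so $0\in\partial J(u)$ and $u$ is a critical point.

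The step I expect to be the main obstacle is exactly the passage from $(\star)$ to the full inequality, and the reason one cannot simply take limits term-by-term in the $(PS)$ inequality of the definition is the absence of compactness: the mass of $u_n$ may escape to infinity and, concretely, $\int_{\mathbb{V}}F_2'(u_n)u_n\,d\mu$ need not converge to $\int_{\mathbb{V}}F_2'(u)u\,d\mu$, which would spoil the required sign. Testing only against a fixed $z\in C_c(\mathbb{V})$ removes any runaway mass from view and gives $(\star)$ cleanly. The complementary difficulty is the non-smoothness of $\Psi$: because $F_1'(u)$ need not define a bounded functional on $X$, one cannot directly substitute $z=v-u$ in $(\star)$, and it is precisely the combination of truncation, the convexity inequality for $F_1$, and the sign condition $F_1\geq 0$ (with \eqref{use1} available if needed) that circumvents this and lets the limit be taken.
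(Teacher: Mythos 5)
Your proposal is correct and, for the only part of the lemma that is not simply quoted from the literature, it follows essentially the route the paper relies on: parts (i)--(v) are definitional or cited to \cite{SS} and \cite{JS}, and your argument for (vi) --- pointwise convergence of $u_n$ via Lemma \ref{lemembed}, passage to the limit in $\langle J'(u_n),z\rangle$ for fixed $z\in C_c(\mathbb{V})$ using local finiteness, and the upgrade to the sub-differential inequality via convexity of $F_1$, the sign $F_1\ge 0$, and exhaustion by balls --- is the natural discrete analogue of the proof of Lemma 2.4 in \cite{JS}. The only cosmetic remark is that the hypothesis $J'(u_n)\to 0$ already yields $\langle J'(u_n),z\rangle=o_n(1)\|z\|$ directly, so the appeal to part (iv) at that step is not needed.
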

\begin{remark}\label{repoint}
Let $u$ be a weak solution of \eqref{loga1}. Due to the formula for integral by parts on graphs, see e.g. \cite[Theorem 2.1]{Gr}, for any test function $\varphi \in C_c(\mathbb{V})$, we have $\int_{\mathbb{V}}(-(\Delta u)\varphi +V(x)u \varphi) d \mu=\int_{\mathbb{V}}u\varphi\log{u^2} d \mu$. For any fixed $y \in \mathbb{V}$, take the test function to be
$$
\delta_y(x)= \begin{cases} 1 , & x=y, \\ 0, & x \neq y .\end{cases}
$$
Then we get $-\Delta u\left(y\right)+V(y)u\left(y\right)=u\left(y\right)\log{u^2(y)}$, which tells us that, in fact, a weak solution is also a point-wise solution.
\end{remark}
\begin{definition}\label{defcompact}
    We shall say that a set $A \subset X$ has compact support if there exists $R>0$ and $x_0 \in \mathbb{V}$ such that $u(x)=0$ for all $d(x,x_0)>R$ and $u \in A$.
\end{definition}

\section{The compact case}

\begin{proposition}
\label{propcompactps}
If $\left(u_{n}\right)$ is a sequence such that $J\left(u_{n}\right)$ is bounded above and $J^{\prime}\left(u_{n}\right) \rightarrow 0$, then $\left(u_{n}\right)$ has a convergent subsequence. In particular, $J$ satisfies the $(PS)$ condition.
\end{proposition}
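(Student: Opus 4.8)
The plan is to establish the $(PS)$ condition in three stages: first show that any sequence $(u_n)$ with $J(u_n)$ bounded above and $J'(u_n)\to 0$ is bounded in $X$; then extract a weakly convergent subsequence whose limit is a critical point; and finally upgrade weak convergence to strong convergence in $X$ by exploiting the compactness of the embedding $X\hookrightarrow\ell^p(\mathbb{V})$ that is available in the coercive case. The boundedness step is where the coercive case genuinely differs from Lemma \ref{bounded}, and I expect it to be the main obstacle.

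For boundedness, the estimate coming from $J(u_n)\le C$ alone does not close, because the logarithmic term allows $\tfrac{1}{2}\norm{u_n}_X^2 = J(u_n)+\tfrac{1}{2}\int_{\mathbb{V}} u_n^2\log u_n^2\,d\mu$ to grow, and here, unlike in Lemma \ref{bounded}, the spaces $\ell^2(\mathbb{V})$ and $X$ are no longer equivalent. I would combine two inputs. Testing with $z=u_n$ (legitimate once $J'(u_n)$ is extended to an element of $X'$, as recorded just before Lemma \ref{sub1}) and using Lemma \ref{sub1}(iv), the same computation as in \eqref{eqcontroll2} gives $\norm{u_n}_2^2 = 2J(u_n)-\langle J'(u_n),u_n\rangle \le 2C + o_n(1)\norm{u_n}_X$. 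On the other hand, $J(u_n)\le C$ together with the logarithmic Sobolev inequality \eqref{eqlogineq3} yields $\norm{u_n}_X^2 \le 2C + \norm{u_n}_2^2\log\norm{u_n}_2^2$. Assuming $\norm{u_n}_X\to\infty$ along a subsequence, I set $M_n := 2C+o_n(1)\norm{u_n}_X$, so that $\norm{u_n}_2^2\le M_n$ and hence $\norm{u_n}_X^2 \le 2C + M_n\log M_n$ for large $n$; since $M_n\le \epsilon_n\norm{u_n}_X$ with $\epsilon_n\to 0$, dividing by $\norm{u_n}_X^2$ and using $\log t/t\to 0$ forces $1\le o_n(1)$, a contradiction. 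Hence $(u_n)$ is bounded in $X$.

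With boundedness in hand, Lemma \ref{lemembed} provides a subsequence with $u_n\rightharpoonup u$ in $X$, $u_n(x)\to u(x)$ for every $x\in\mathbb{V}$, and $u_n\rightharpoonup u$ in $\ell^q(\mathbb{V})$; moreover Lemma \ref{sub1}(vi) guarantees that the weak limit $u$ is a critical point of $J$. Crucially, in the coercive case the embedding $X\hookrightarrow\ell^p(\mathbb{V})$ is compact for every $p\in[2,\infty]$ (\cite[Lemma 2.2]{Gri1}), so in fact $u_n\to u$ strongly in $\ell^p(\mathbb{V})$ for the exponent $p$ fixed in \eqref{use2}.

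To conclude I would isolate $\norm{u_n-u}_X^2$ using the convex structure, which neatly disposes of the nonsmooth part. Applying the $(PS)$ inequality \eqref{eqps} with $v=u$ gives $\langle\Phi'(u_n),u-u_n\rangle+\Psi(u)-\Psi(u_n)\ge -\tau_n\norm{u-u_n}$, while the criticality inequality of Lemma \ref{sub1}(i) applied to $u$ with test function $v=u_n$ gives $\langle\Phi'(u),u_n-u\rangle+\Psi(u_n)-\Psi(u)\ge 0$. Adding these, the terms involving $\Psi$ cancel and one obtains $\langle\Phi'(u_n)-\Phi'(u),u_n-u\rangle\le \tau_n\norm{u_n-u}$. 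Since $\langle\Phi'(u_n)-\Phi'(u),u_n-u\rangle = \norm{u_n-u}_X^2 - \int_{\mathbb{V}}\bigl(F_2'(u_n)-F_2'(u)\bigr)(u_n-u)\,d\mu$, this rearranges to $\norm{u_n-u}_X^2 \le \tau_n\norm{u_n-u} + \int_{\mathbb{V}}\bigl(F_2'(u_n)-F_2'(u)\bigr)(u_n-u)\,d\mu$. The first term vanishes because $\tau_n\to 0$ and $\norm{u_n-u}$ is bounded, while the integral is controlled by the growth bound \eqref{use2} and Hölder's inequality by $C\bigl(\norm{u_n}_p^{p-1}+\norm{u}_p^{p-1}\bigr)\norm{u_n-u}_p$, which tends to $0$ by the strong $\ell^p$ convergence. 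Therefore $\norm{u_n-u}_X\to 0$, producing the desired convergent subsequence and establishing the $(PS)$ condition.
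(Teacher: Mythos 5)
Your proposal is correct, and its skeleton (boundedness, weak limit via Lemma \ref{lemembed}, compact embedding into $\ell^p$, then strong convergence from the $(PS)$ inequality) matches the paper's. The differences are in the bookkeeping of two steps, and they are worth noting. For boundedness, the paper combines the same two ingredients --- $\norm{u_n}_2^2=2J(u_n)-\langle J'(u_n),u_n\rangle$ and the logarithmic inequality \eqref{eqlogineq3} --- but closes the estimate directly by absorbing $\norm{u_n}_2^2\log\norm{u_n}_2^2\le C_1(1+\norm{u_n}^r)$ with $r\in(1,2)$ into $\norm{u_n}^2$, rather than arguing by contradiction with $M_n\log M_n$; your version is equivalent, though you should note in passing that $t\log t\le\max\{0,M\log M\}$ for $t\in[0,M]$ (monotonicity fails near $0$) and dispose of the sub-case where $\epsilon_n\norm{u_n}_X$ stays bounded --- both are one-line fixes. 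For the convergence step, the paper takes $v=u$ in \eqref{eqps} and uses only the weak lower semicontinuity of $\Psi$ and of the norm to conclude $\norm{u_n}\to\norm{u}$, hence strong convergence in the Hilbert space $X$; it does not need to know beforehand that $u$ is a critical point. You instead invoke Lemma \ref{sub1}(vi) to get criticality of $u$, pair the $(PS)$ inequality at $v=u$ with the criticality inequality at $v=u_n$, and cancel the $\Psi$-terms to bound $\norm{u_n-u}^2$ directly. Your route costs one extra input (criticality of the weak limit, which is legitimate here) but buys a monotonicity-type argument that does not rely on ``weak convergence plus norm convergence implies strong convergence'' and therefore adapts more readily to the $p$-Laplacian setting of Section 6, where the paper itself only indicates that exponent $2$ is replaced by $p$. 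Both proofs are valid.
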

\begin{proof}
    First we show that $\left(u_{n}\right)$ is bounded in $X$. Choose $d \in \mathbb{R}$ such that $J\left(u_{n}\right) \leq d$ for all $n$. Then

\[
\norm{u_{n}}_{2}^{2}=\int_{\mathbb{V}} u_{n}^{2} d\mu=2 J\left(u_{n}\right)-\left\langle J^{\prime}\left(u_{n}\right), u_{n}\right\rangle \leq 2 d+o(1)\left\|u_{n}\right\| \quad \text { as } n \rightarrow \infty.
\]
By \eqref{eqlogineq3}, we obtain
$$
\begin{aligned}
2 d \geq 2 J\left(u_{n}\right) &=\left\|u_{n}\right\|^{2}-\int_{\mathbb{V}} u_{n}^{2} \log u_{n}^{2} d\mu \\
& \geq \left\|u_{n}\right\|^{2}- \left\|u_{n}\right\|_{2}^{2}\log {\left\|u_{n}\right\|_{2}^{2}} \geq \left\|u_{n}\right\|^{2}-C_{1}\left(1+\left\|u_{n}\right\|^{r}\right),
\end{aligned}
$$
where we take $r \in(1,2)$. Hence the sequence $\left(u_{n}\right)$ is bounded in $X$.

Passing to a subsequence, $u_{n} \rightharpoonup u$ in $X$ for some $u\in X$ and since the embedding $X \hookrightarrow \ell^{p}\left(\mathbb{V}\right)$ is compact for $p \in\left[ 2,\infty \right]$ from \cite[Lemma 2.2]{Gri1}, $u_{n} \rightarrow u$ in $\ell^{p}\left(\mathbb{V}\right)$ for all such $p$. Taking $v=u$ in \eqref{eqps} gives
\begin{equation}\label{eqps3}
    \left\langle u_{n}, u-u_{n}\right\rangle - \int_{\mathbb{V}} F_{2}^{\prime}\left(u_{n}\right)\left(u-u_{n}\right) d\mu+\Psi(u)-\Psi\left(u_{n}\right) \geq-\tau_{n}\left\|u-u_{n}\right\|,
\end{equation}
thus
\begin{equation}\label{eqpscon}
    \|u\|^{2}-\norm{u_{n}}^{2}+\Psi(u)-\Psi\left(u_{n}\right)+o(1) \geq o(1)
\end{equation}
Since $\lim \inf _{n \rightarrow \infty} \Psi\left(u_{n}\right) \geq \Psi(u)$ and $\liminf _{n \rightarrow \infty}\left\|u_{n}\right\|^{2} \geq\|u\|^{2}$, the inequality above implies $\left\|u_{n}\right\| \rightarrow\|u\|$ and hence $u_{n} \rightarrow u$ in $X$.
\end{proof}

Similar to  \cite[Section 3]{JS}, we shall prove Theorem \ref{theo110} by adapting the arguments of Bartsch's Fountain Theorem \cite[Theorem 2.5]{Ba}
and \cite[Theorem 3.6]{W}. The following results are very useful for the proof of Theorem \ref{theo110}, the detailed proofs of which can be found in \cite{JS}, here we omit them.

\begin{lemma}\label{lemempty1}
If $K_{d}=\emptyset$, then there exists $\varepsilon_{0}>0$ such that there are no $(PS)$ sequences in $J_{d-2 \varepsilon_{0}}^{d+2 \varepsilon_{0}}$.
\end{lemma}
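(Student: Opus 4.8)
The plan is to argue by contradiction and reduce everything to the compactness already secured in Proposition \ref{propcompactps}. Suppose the conclusion fails. Then for \emph{every} $\varepsilon_0>0$ the strip $J_{d-2\varepsilon_0}^{d+2\varepsilon_0}$ contains a $(PS)$ sequence. Applying this with $\varepsilon_0=1/n$, I obtain for each $n$ a $(PS)$ sequence $(u_k^{(n)})_k$ whose energies lie in $[d-2/n,\,d+2/n]$ and whose associated parameters $\tau_k^{(n)}$ in \eqref{eqps} tend to $0$ as $k\to\infty$. A diagonal selection then produces a single sequence: for each $n$ pick $k_n$ with $\tau_{k_n}^{(n)}\le 1/n$ and set $w_n:=u_{k_n}^{(n)}$. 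By construction $J(w_n)\in[d-2/n,\,d+2/n]$, so $J(w_n)\to d$, and $w_n$ satisfies \eqref{eqps} with parameter $\tilde\tau_n:=\tau_{k_n}^{(n)}\to 0$. Hence $(w_n)$ is a genuine $(PS)$ sequence at level $d$.

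Next I would invoke the $(PS)$ condition. Since $(w_n)$ is a $(PS)$ sequence with $J(w_n)$ bounded, Proposition \ref{propcompactps} provides a subsequence, still denoted $(w_n)$, converging strongly in $X$ to some $u$. Moreover $J'(w_n)\to 0$ by Lemma \ref{sub1}(iv), so that Lemma \ref{sub1}(vi) ensures the (weak, a fortiori strong) limit $u$ is a critical point of $J$, i.e. $0\in\partial J(u)$. It then remains only to check that $u$ sits at level $d$, for then $u\in K_d$, contradicting $K_d=\emptyset$.

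The delicate point — the one genuinely specific to the nonsmooth setting — is to verify $J(u)=d$, since $\Psi$ is merely lower semicontinuous and carries the singular logarithm, so continuity of $J$ is not available. I would establish $\Psi(w_n)\to\Psi(u)$ by a two-sided inequality. Lower semicontinuity gives $\liminf_n\Psi(w_n)\ge\Psi(u)$. For the reverse, test \eqref{eqps} for $w_n$ with $v=u$:
\[
\langle\Phi'(w_n),\,u-w_n\rangle+\Psi(u)-\Psi(w_n)\ge-\tilde\tau_n\|u-w_n\|.
\]
Because $(w_n)$ is bounded in $X$ and $F_2'$ obeys the subcritical growth \eqref{use2}, the functionals $\Phi'(w_n)$ are bounded in $X'$; since $\|u-w_n\|\to 0$ and $\tilde\tau_n\to 0$, passing to the limit forces $\liminf_n(\Psi(u)-\Psi(w_n))\ge 0$, that is $\limsup_n\Psi(w_n)\le\Psi(u)$. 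Combined with the lower semicontinuity bound this yields $\Psi(w_n)\to\Psi(u)$, and since $\Phi\in C^1(X,\mathbb{R})$ gives $\Phi(w_n)\to\Phi(u)$, we get $J(w_n)\to J(u)$. Comparing with $J(w_n)\to d$ shows $J(u)=d$, so $u\in K_d$, the desired contradiction.

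I expect the only real work to be this last paragraph; the contradiction-with-diagonalization skeleton and the appeal to Proposition \ref{propcompactps} are routine once the $(PS)$ condition and the identity $\Psi(w_n)\to\Psi(u)$ are in hand.
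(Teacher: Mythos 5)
Your proof is correct and takes essentially the approach the paper intends: the paper omits the proof of this lemma and defers to \cite{JS}, where the argument is exactly your diagonalization producing a $(PS)$ sequence at level $d$, followed by the $(PS)$ condition of Proposition \ref{propcompactps} and the identification of the limit as a critical point at level $d$. The only cosmetic remark is that the convergence $\Psi(w_n)\to\Psi(u)$, which you obtain by testing \eqref{eqps} with $v=u$, is already implicit in the proof of Proposition \ref{propcompactps} (inequality \eqref{eqpscon} together with lower semicontinuity and $\|w_n\|\to\|u\|$), so your last paragraph could simply appeal to that.
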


\begin{proposition}\label{propdefor1}
    Suppose $K_{d}=\emptyset$ and $\varepsilon_{0}$ is as in Lemma \ref{lemempty1}. If $\varepsilon \in\left(0, \varepsilon_{0}\right)$, then for each compact set $A \subset J^{d+\varepsilon} \cap C_c\left(\mathbb{V}\right)$,  there exists $T>0$ such that $J(\eta(T, A)) \subset J^{d-\varepsilon}$ where the flow $\eta$ given by (3.5) in \cite{JS}.
\end{proposition}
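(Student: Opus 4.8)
The plan is to prove a quantitative deformation lemma by feeding the steepness information contained in Lemma~\ref{lemempty1} into the descent flow $\eta$ built in \cite{JS}. First I would translate the hypothesis into a uniform slope estimate on the strip. Since $K_d=\emptyset$, Lemma~\ref{lemempty1} furnishes $\varepsilon_0>0$ with no $(PS)$ sequence in $J_{d-2\varepsilon_0}^{d+2\varepsilon_0}$; recalling that a $(PS)$ sequence is detected through $\|J'(u_n)\|\to 0$ (Lemma~\ref{sub1}(iv)), this is exactly the absence of a sequence $(u_n)$ in the strip with $J(u_n)$ convergent and $\|J'(u_n)\|\to 0$. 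Hence there is a constant $\delta>0$ such that $\|J'(u)\|\geq\delta$ for every $u\in J_{d-2\varepsilon_0}^{d+2\varepsilon_0}$, for otherwise a diagonal choice would produce such a forbidden $(PS)$ sequence.

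Next I would invoke the flow $\eta$ given by (3.5) in \cite{JS}. The point of restricting to $A\subset C_c(\mathbb{V})$ is that compactly supported functions live in a bounded domain, on which $\Psi$, and therefore $J$, is of class $C^1$ by Lemma~\ref{sub1}(v); thus $\eta$ is generated by a genuine locally Lipschitz pseudo-gradient vector field for $J$, with $\eta(0,\cdot)=\mathrm{id}$ and $t\mapsto J(\eta(t,u))$ non-increasing. The decisive property is the descent estimate: as long as the trajectory stays in the region $\{d-\varepsilon\leq J\leq d+\varepsilon\}\subset J_{d-2\varepsilon_0}^{d+2\varepsilon_0}$, the pseudo-gradient construction combined with the slope bound $\|J'\|\geq\delta$ yields a fixed constant $c>0$ with
\[
\frac{d}{dt}J(\eta(t,u))\leq -c,
\]
uniformly in $u$.

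Then I would integrate this inequality. Fix $u\in A\subset J^{d+\varepsilon}$. As long as $J(\eta(t,u))\geq d-\varepsilon$ one has $J(\eta(t,u))\leq (d+\varepsilon)-ct$, so choosing $T:=2\varepsilon/c$ forces $J(\eta(T,u))\leq d-\varepsilon$; since $J$ is non-increasing along $\eta$, the point then remains in $J^{d-\varepsilon}$. The time $T$ depends only on $\varepsilon$ and $c$ and not on the particular $u$, so a single $T$ pushes the whole set below level $d-\varepsilon$, giving $\eta(T,A)\subset J^{d-\varepsilon}$. Compactness of $A$ is used to make the various constants uniform: it provides a common bounded domain containing the supports and a uniform bound on $\|u\|$, so that the pseudo-gradient field is uniformly locally Lipschitz and the descent rate $c$ is independent of the trajectory.

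I expect the main obstacle to be the careful handling of the nonsmoothness of $\Psi$: the differential inequality above is literally available only where $J$ is differentiable, so one must verify that the flow from \cite{JS} never leaves the bounded domain carrying the supports of $A$ (keeping Lemma~\ref{sub1}(v) in force along the whole trajectory) and that the pseudo-gradient can be chosen to respect the convex term $\Psi$ while still producing the uniform decrease. Establishing that the no-$(PS)$ condition of Lemma~\ref{lemempty1} genuinely upgrades to the uniform rate $c$ along the nonsmooth flow, uniformly over the compact set $A$, is the crux of the argument.
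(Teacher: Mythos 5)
Your proposal is correct and follows essentially the same route as the proof this paper relies on: the proposition is stated without proof here, with the details deferred to \cite{JS}, where the deformation is obtained exactly as you outline --- the absence of $(PS)$ sequences in the strip $J_{d-2\varepsilon_0}^{d+2\varepsilon_0}$ upgrades (via Lemma \ref{sub1}(ii) and (iv)) to a uniform positive lower bound on $\|J'\|$ there, a locally Lipschitz pseudo-gradient field built from compactly supported test directions gives a uniform descent rate along $\eta$, and compactness of $A$ together with Lemma \ref{sub1}(v) keeps the trajectories supported in a fixed bounded domain where $J$ is of class $C^1$. Integrating the descent inequality over a time $T$ comparable to $2\varepsilon$ divided by the descent rate then yields $\eta(T,A)\subset J^{d-\varepsilon}$, as you state.
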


Since $X$ is separable and $C_c\left(\mathbb{V}\right)$ is dense in $X$, there exists an increasing sequence $\left(X_{k}\right) \subset C_c\left(\mathbb{V}\right)$ of subspaces such that $\operatorname{dim} X_{k}=k$ and $X=\overline{\cup_{k=1}^{\infty} X_{k}}$. Let $Z_{k}:=X_{k}^{\perp}$ and
$$
B_{k}:=\left\{u \in X_{k}:\|u\| \leq \rho_{k}\right\}, \quad N_{k}:=\left\{u \in Z_{k-1}:\|u\|=r_{k}\right\}, \quad \text { where } \rho_{k}>r_{k}>0.
$$
\begin{lemma}\label{lemgamma}(See \cite[Lemma 3.4]{W}.) If $\gamma \in C\left(B_{k}, X\right)$ is odd and $\left.\gamma\right|_{\partial B_{k}}=i d$, then $\gamma\left(B_{k}\right) \cap N_{k} \neq \emptyset$.
\end{lemma}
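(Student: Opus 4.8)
The plan is to treat Lemma~\ref{lemgamma} as a purely topological intersection (linking) statement: beyond continuity, oddness, and the boundary normalization $\gamma|_{\partial B_k}=\mathrm{id}$, no analytic information about $\gamma$ is used, and the proof rests on the Borsuk--Ulam theorem together with a connectedness argument.

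First I would fix the orthogonal splitting $X=X_{k-1}\oplus Z_{k-1}$ (recall $Z_{k-1}=X_{k-1}^{\perp}$), denote by $P\colon X\to X_{k-1}$ the orthogonal projection, and pick a unit vector $e_k\in X_k\cap Z_{k-1}$, so that $X_k=X_{k-1}\oplus\mathbb{R}e_k$ and $\dim X_{k-1}=k-1$. Since $N_k=\{v\in Z_{k-1}:\norm{v}=r_k\}$, the inclusion $\gamma(u)\in N_k$ is equivalent to the two requirements $P\gamma(u)=0$ (which forces $\gamma(u)\in Z_{k-1}$) and $\norm{\gamma(u)}=r_k$. Thus it suffices to find $u\in B_k$ satisfying these $k$ scalar equations, namely the $k-1$ equations contained in $P\gamma(u)=0$ together with the single norm equation, a count that matches $\dim X_k=k$.

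The topological heart of the argument is Borsuk--Ulam. The map $g:=P\gamma\colon B_k\to X_{k-1}\cong\mathbb{R}^{k-1}$ is continuous and odd, so its restriction to each sphere $S_\rho:=\{u\in X_k:\norm{u}=\rho\}$, a $(k-1)$-sphere, must vanish somewhere; hence for every $\rho\in(0,\rho_k]$ there is $u_\rho\in S_\rho$ with $g(u_\rho)=0$, i.e. the zero set $M:=g^{-1}(0)\subset B_k$ meets every such sphere, and in particular $M\cap\partial B_k\neq\emptyset$. On $M$ I then track $u\mapsto\norm{\gamma(u)}$: because $\gamma$ is continuous with $\gamma(0)=0$, this value is $<r_k$ near the origin, whereas on $\partial B_k$ the normalization gives $\norm{\gamma(u)}=\norm{u}=\rho_k>r_k$. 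I would next isolate the connected component $\Sigma$ of $M$ containing the origin and argue that it reaches $\partial B_k$; along $\Sigma$ the continuous function $\norm{\gamma(\cdot)}$ then runs from $0$ up to $\rho_k$, so by the intermediate value theorem it equals $r_k$ at some $u^{\ast}\in\Sigma$. For this $u^{\ast}$ we have $\gamma(u^{\ast})\in Z_{k-1}$ and $\norm{\gamma(u^{\ast})}=r_k$, that is $\gamma(u^{\ast})\in N_k$, which proves the claim.

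The hard part is exactly this connectivity step: a priori $M$ may be disconnected, and the separation-type consequences of Borsuk--Ulam alone do not force the component through the origin to reach the boundary. I would establish it through a degree-theoretic global-continuation argument of Rabinowitz type (a degree jump across the level $\{\norm{\gamma(\cdot)}=r_k\}$ forces a continuum of zeros of $g$ joining the origin to $\partial B_k$), or, equivalently, via a Krasnoselskii genus estimate that crucially exploits the oddness of $\gamma$ and the fact that $\gamma(\partial B_k)=\partial B_k$ carries full genus $k$. It is instructive to see why plain Brouwer degree is not enough: the tempting odd-looking map $u\mapsto P\gamma(u)+(\norm{\gamma(u)}-r_k)e_k$ into $X_k$ has an \emph{even} last component, and the homotopy replacing $\gamma$ by the identity (admissible since $\gamma=\mathrm{id}$ on $\partial B_k$, where the homotopy stays away from $0$) reduces it to $u\mapsto Pu+(\norm{u}-r_k)e_k$, whose two antipodal zeros $\pm r_k e_k$ carry opposite local degrees and cancel, giving total degree $0$. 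This cancellation is precisely why the proof must lean on Borsuk--Ulam and connectedness, or on genus, rather than on a single degree computation, and it also explains the symmetric role played by the oddness hypothesis throughout.
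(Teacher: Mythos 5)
Your reduction of the problem to the two conditions $P\gamma(u)=0$ and $\norm{\gamma(u)}=r_k$, and your use of the Borsuk--Ulam theorem for the odd map $g=P\gamma$, are the right ingredients; but the proof as written has a genuine gap at exactly the step you flag. Knowing that $g$ vanishes somewhere on every sphere $S_\rho$ does not produce a connected subset of $M=g^{-1}(0)$ joining $0$ to $\partial B_k$, and without such a continuum the intermediate value argument for $\norm{\gamma(\cdot)}$ does not run. The two repairs you gesture at (a Rabinowitz-type global continuation, or a genus estimate) are not carried out, and neither is a routine consequence of what you have established: a continuation argument needs a nontrivial a priori degree, which --- as your own discussion of the map $u\mapsto P\gamma(u)+(\norm{\gamma(u)}-r_k)e_k$ shows --- comes out to $0$, while a genus bound on $M\cap S_\rho$ gives nonemptiness for each fixed $\rho$ (which you already have) but no connectedness across different radii. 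So the argument is incomplete as it stands.

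The standard proof (this is Willem's Lemma 3.4, which the paper cites without reproducing) avoids the connectivity issue entirely by choosing the domain of the Borsuk--Ulam argument more cleverly. Set $U:=\{u\in B_k:\norm{\gamma(u)}<r_k\}$. Since $\gamma$ is odd and continuous, $\gamma(0)=0$, so $U$ is a bounded open symmetric neighbourhood of $0$ in the $k$-dimensional space $X_k$; and since $\gamma=\mathrm{id}$ on $\partial B_k$ with $\rho_k>r_k$, the closure of $U$ lies in the interior of $B_k$. By continuity every $u\in\partial U$ satisfies $\norm{\gamma(u)}=r_k$, i.e.\ the norm constraint holds automatically on $\partial U$. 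Now apply the Borsuk--Ulam theorem to the continuous odd map $P\gamma:\partial U\to X_{k-1}\cong\mathbb{R}^{k-1}$ (the version for boundaries of bounded symmetric neighbourhoods of the origin in $\mathbb{R}^k$, which follows from Borsuk's odd-degree theorem because a map into the proper subspace $\mathbb{R}^{k-1}$ cannot cover a neighbourhood of $0$ in $\mathbb{R}^k$): it has a zero $u^{\ast}\in\partial U$. Then $\gamma(u^{\ast})\in Z_{k-1}$ and $\norm{\gamma(u^{\ast})}=r_k$, so $\gamma(u^{\ast})\in N_k$. In short, one application of Borsuk--Ulam on $\partial U$ replaces your family of applications on the spheres $S_\rho$ together with the unproved connectedness of the zero set; I recommend rewriting the proof along these lines.
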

\begin{proposition}\label{propak}
    There exist $\rho_{k}>r_{k}>0$ such that
$$
a_{k}:=\max _{\substack{u \in X_{k} \\\|u\|=\rho_{k}}} J(u) \leq 0 \text { for all } k \quad \text { and } \quad b_{k}:=\inf _{u \in N_{k}} J(u) \rightarrow \infty \text { as } k \rightarrow \infty.
$$
\end{proposition}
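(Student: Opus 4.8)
The plan is to verify the two geometric hypotheses of Bartsch's Fountain theorem one at a time: the condition $a_k \le 0$ follows from the anti-coercivity of the logarithmic term on the finite-dimensional spaces $X_k$, while $b_k \to \infty$ follows from the vanishing of the best $\ell^p$-embedding constants of the tail spaces $Z_{k-1}$, which is where the coercivity of $V$ (through the compact embedding of Lemma \ref{lemembed}) enters decisively.

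For the estimate $a_k \le 0$ I would fix $k$ and exploit that $X_k \subset C_c(\mathbb{V})$ is finite-dimensional, so every $u \in X_k$ has finite support, $\int_{\mathbb V} u^2\log u^2\,d\mu$ is finite and depends continuously on $u$, and the norms $\|\cdot\|$ and $\|\cdot\|_2$ are equivalent on $X_k$; in particular there is $c_k>0$ with $\|v\|_2 \ge c_k$ for every $v$ in the compact unit sphere $S_k=\{v\in X_k:\|v\|=1\}$. Writing $u=tv$ with $\|v\|=1$ and $t=\|u\|$, one computes
$$
J(tv)=\frac{t^2}{2}-\frac{t^2}{2}(\log t^2)\,\|v\|_2^2-\frac{t^2}{2}\int_{\mathbb V} v^2\log v^2\,d\mu.
$$
Since $\|v\|_2^2\ge c_k^2$ and $\int_{\mathbb V} v^2\log v^2\,d\mu$ is bounded on $S_k$, the middle term $-\tfrac{t^2}{2}(\log t^2)\|v\|_2^2\to-\infty$ dominates uniformly in $v\in S_k$ as $t\to\infty$. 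Hence $J(tv)\to-\infty$ uniformly on $S_k$, and choosing $\rho_k$ large enough forces $J(u)\le 0$ on $\{u\in X_k:\|u\|=\rho_k\}$, i.e. $a_k\le 0$.

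For $b_k\to\infty$ I would first set $\beta_k:=\sup_{u\in Z_k,\,\|u\|=1}\|u\|_p$ for the fixed $p\in(2,\infty)$ of \eqref{use2} and show $\beta_k\to 0$. The sequence $(\beta_k)$ is nonincreasing and nonnegative, hence convergent to some $\beta\ge 0$; choosing $u_k\in Z_k$ with $\|u_k\|=1$ and $\|u_k\|_p\ge\beta_k/2$, the orthogonality $u_k\in Z_k=X_k^\perp$ forces $u_k\rightharpoonup 0$ in $X$ (every element of $\cup_m X_m$ is eventually orthogonal to $u_k$), and the compact embedding $X\hookrightarrow\ell^p$ of Lemma \ref{lemembed} gives $\|u_k\|_p\to 0$, so $\beta=0$. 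Next, since $\Psi\ge 0$ we have $J\ge\Phi$, and integrating \eqref{use2} yields $|F_2(s)|\le\frac{C}{p}|s|^p$; therefore, for $u\in N_k$ (that is $u\in Z_{k-1}$, $\|u\|=r_k$),
$$
J(u)\ge\Phi(u)=\frac12\|u\|^2-\int_{\mathbb V}F_2(u)\,d\mu\ge\frac12 r_k^2-\frac{C}{p}\|u\|_p^p\ge\frac12 r_k^2-\frac{C}{p}\beta_{k-1}^p\,r_k^p.
$$
Optimizing the right-hand side, I would take $r_k:=(C\beta_{k-1}^p)^{-1/(p-2)}$, which gives $J(u)\ge\frac{p-2}{2p}\,r_k^2$ on $N_k$; since $\beta_{k-1}\to 0$ and $p>2$ force $r_k\to\infty$, we get $b_k\ge\frac{p-2}{2p}r_k^2\to\infty$. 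As each $r_k$ is finite, I am free to enlarge $\rho_k$ so that $\rho_k>r_k$ and $a_k\le 0$ hold together.

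The main obstacle is precisely the claim $\beta_k\to 0$: this is the single step that genuinely relies on the coercivity hypothesis $V(x)\to+\infty$, via the compactness of $X\hookrightarrow\ell^p$; the rest is a routine optimization. A secondary point worth flagging is that the polynomial control $|F_2(s)|\le\frac{C}{p}|s|^p$ lets us discard the delicate logarithmic nonlinearity entirely on $N_k$ — legitimate because $\Psi\ge 0$ bounds $J$ below by the smooth, subcritical functional $\Phi$.
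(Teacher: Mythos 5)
Your proposal is correct and follows essentially the same route as the paper: the identical finite-dimensional anti-coercivity argument for $a_k\le 0$, and the identical tail-constant argument for $b_k\to\infty$ (showing $\beta_k\to 0$ via weak convergence to zero of unit vectors in $Z_{k}$ plus the compact embedding, then taking $r_k$ of order $\beta_k^{-1}$). The only cosmetic difference is that on $N_k$ you control the nonlinearity by discarding $\Psi\ge 0$ and integrating the growth bound \eqref{use2} on $F_2$ in $\ell^p$, whereas the paper uses the logarithmic inequality \eqref{eqlogineq3} in $\ell^2$ together with $t\log t\le C_1t^{p/2}+C_2$; both yield the same estimate $J(u)\ge\tfrac12\|u\|^2-C\beta_k^p\|u\|^p-C'$, and your explicitly optimized $r_k$ is a harmless refinement of the paper's choice $r_k=1/\beta_k$.
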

\begin{proof}
Let $u=s w$, where $w \in X_{k}$ and $\|w\|_{2}=1$. Then
$$
J(s w)=\frac{s^{2}}{2}\left(\int_{\mathbb{V}}\left(|\nabla w|^{2}+(V(x)+1) w^{2}\right) d\mu-\log s^{2}-\int_{\mathbb{V}} w^{2} \log w^{2} d\mu\right) .
$$
Since all norms in $X_{k}$ are equivalent and $X_{k} \subset C_c\left(\mathbb{V}\right)$, both integrals above are uniformly bounded. Hence $J(s w) \rightarrow-\infty$ uniformly in $w$ as $s \rightarrow \infty$, so there exists $\rho_{k}$ such that $a_{k} \leq 0$. Moreover, $\rho_{k}$ may be chosen as large as we need.

Let
$$
\beta_{k}:=\sup _{\substack{u \in Z_{k-1} \\\|u\|=1}}\|u\|_{2}.
$$
Then $\beta_{k} \rightarrow 0$. Because the sequence $\left(\beta_{k}\right)$ is positive and decreasing, hence $\beta_{k} \rightarrow \beta \geq 0$ and $\left\|u_{k}\right\|_{2} \geq \beta_{k} / 2$ for some $u_{k} \in Z_{k-1}$, $\left\|u_{k}\right\|=1$.
Since $u_{k} \rightharpoonup 0$ in $X, u_{k} \rightarrow 0$ in $\ell^{2}\left(\mathbb{V}\right)$. This implies that $\beta=0$.

Using \eqref{eqlogineq3} as in the proof of Proposition \ref{propcompactps} we obtain
$$
\begin{aligned}
J(u) & =\frac{1}{2}\|u\|^{2}-\frac{1}{2} \int_{\mathbb{R}^{N}} u^{2} \log u^{2} d\mu\\
& \geq \frac{1}{2}\|u\|^{2}-\|u\|_{2}^{2}\log \|u\|_{2}^{2} \geq \frac{1}{2}\|u\|^{2}-C_{1}\|u\|_{2}^{p}-C_{2} \\
& \geq \frac{1}{2}\|u\|^{2}-C_{1} \beta_{k}^{p}\|u\|^{p}-C_{2},
\end{aligned}
$$
where $p > 2$. Let $r_{k}=1 / \beta_{k}$ and $\|u\|=r_{k}$. Then
$$
J(u) \geq \frac{1}{2 \beta_{k}^{2}}-C_{1}-C_{2} \rightarrow \infty \quad \text { as } k \rightarrow \infty
$$
and hence $b_{k} \rightarrow \infty$. Since we may choose $\rho_{k}>r_{k}$, the proof is complete.
\end{proof}
\begin{proof}[Proof of Theorem \ref{theo110}]
Let
$$
\Gamma_{k}:=\left\{\gamma \in C\left(B_{k}, X\right): \gamma \text { is odd, }\left.\gamma\right|_{\partial B_{k}}=i d \text { and } \gamma\left(B_{k}\right) \text { has compact support }\right\}
$$
and
$$
d_{k}:=\inf _{\gamma \in \Gamma_{k}} \max _{u \in B_{k}} J(\gamma(u))
$$
From Lemma \ref{lemgamma}, $\gamma\left(B_{k}\right) \cap N_{k} \neq \emptyset$. Thus,  $d_{k} \geq b_{k} \rightarrow \infty$ and it remains to show that $K_{d_{k}} \neq \emptyset$ if $k$ is large. Argue by contradiction, choose $\varepsilon_{0}, \varepsilon$ and $T$ as in Proposition \ref{propdefor1} and let $\gamma \in \Gamma_{k}$ be such that $\gamma\left(B_{k}\right) \subset J^{d_{k}+\varepsilon}$. Let $\beta(u):=\eta(T, \gamma(u))$, where the flow $\eta$ given by (3.5) in \cite{JS}. Since $\eta(T, u)=u$ for all $u \in J^{d_{k}-\varepsilon_{0}}$, $\beta \in \Gamma_{k}$. By Proposition \ref{propdefor1}, $\beta\left(B_{k}\right) \subset J^{d_{k}-\varepsilon}$, which contradicts with the definition of $d_{k}$. The proof of Theorem \ref{theo110} is complete.
\end{proof}

\section{The periodic case}

First of all, we show that the functional $J$ satisfies the mountain pass geometry \cite{W}.

\begin{lemma}\label{mp}
The functional $J$ satisfies the following conditions:\\
	(i) there exist $\alpha, \rho>0$ such that $J(u) \geq \alpha$ for any $u \in X$ with $\|u\|=\rho$;\\
	(ii)  there exists $e \in X$ with $\|e\|>\rho$ such that $J(e)<0$.
\end{lemma}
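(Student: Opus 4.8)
The plan is to exploit the decomposition $J=\Phi+\Psi$ from \eqref{add1}, in which $\Psi(u)=\int_{\mathbb{V}}F_1(u)\,d\mu\ge 0$ because $F_1\ge 0$. This immediately gives the one-sided bound
$J(u)\ge \Phi(u)=\frac12\|u\|^2-\int_{\mathbb{V}}F_2(u)\,d\mu$,
which is the crucial reduction: it isolates the only term carrying the ``wrong'' sign into the polynomially controlled function $F_2$, while the genuinely singular part of the logarithm (the region where $u^2\log u^2<0$) is absorbed into the nonnegative $\Psi$ and simply discarded. I would set up both parts of the lemma on top of this inequality.

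For part (i), I would first turn the pointwise estimate \eqref{use2}, namely $|F_2'(s)|\le C|s|^{p-1}$ with the fixed $p\in(2,\infty)$, into the integrated bound $|F_2(s)|\le \frac{C}{p}|s|^p$ (using $F_2(0)=0$). Then $\int_{\mathbb{V}}F_2(u)\,d\mu\le \frac{C}{p}\|u\|_p^p$, and the continuous embedding $X\hookrightarrow \ell^p(\mathbb{V})$ from Lemma \ref{lemembed} yields $\|u\|_p\le C_p\|u\|$. Combining, $J(u)\ge \frac12\|u\|^2-C'\|u\|^p=\|u\|^2\bigl(\frac12-C'\|u\|^{p-2}\bigr)$. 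Since $p>2$, choosing $\rho>0$ small enough that $C'\rho^{p-2}<\frac12$ and setting $\alpha:=\rho^2\bigl(\frac12-C'\rho^{p-2}\bigr)>0$ gives $J(u)\ge\alpha$ for all $u$ with $\|u\|=\rho$.

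For part (ii), I would fix any $w\in C_c(\mathbb{V})$ with $\|w\|_2=1$ and test $J$ along the ray $sw$, $s>0$. A direct computation, exactly as in Proposition \ref{propak}, gives $J(sw)=\frac{s^2}{2}\bigl(\|w\|^2-\log s^2-\int_{\mathbb{V}}w^2\log w^2\,d\mu\bigr)$. Because $w$ has finite support, both $\|w\|^2$ and $\int_{\mathbb{V}}w^2\log w^2\,d\mu$ are finite constants, so the bracketed quantity tends to $-\infty$ as $s\to\infty$, whence $J(sw)\to-\infty$. Picking $s$ large enough that $J(sw)<0$ and $\|sw\|=s\|w\|>\rho$, and setting $e:=sw$, settles (ii).

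The only delicate point is (i): since $J$ is not even continuous on $X$, one cannot estimate $-\frac12\int_{\mathbb{V}}u^2\log u^2\,d\mu$ directly, and this is precisely why the splitting $J=\Phi+\Psi$ is indispensable --- it lets us drop the nonnegative convex part $\Psi$ and keep only the polynomially bounded $F_2$ inside the $C^1$ functional $\Phi$. Part (ii) is routine once the ray computation is in place, the $-\log s^2$ term guaranteeing that coercivity fails along rays and driving $J$ to $-\infty$.
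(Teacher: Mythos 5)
Your proposal is correct and follows essentially the same route as the paper: part (i) drops the nonnegative convex part $\Psi$ and bounds $\int_{\mathbb{V}}F_2(u)\,d\mu$ via \eqref{use2} and the embedding $X\hookrightarrow\ell^p(\mathbb{V})$, and part (ii) uses the same scaling identity $J(su)=s^2\bigl(J(u)-\log s\int_{\mathbb{V}}u^2\,d\mu\bigr)$ along a ray (the paper takes any $u\in D(J)\setminus\{0\}$ rather than a compactly supported $w$, but the computation is identical).
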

\begin{proof}
(i) Since $F_1(s) \geq 0$ for all $s \in \mathbb{R}$, we have
$$
J(u) \geq \frac{1}{2}\|u\|^2-\int_{\mathbb{V}} F_2(u) d \mu.
$$
Now, from \eqref{use2} and Lemma \ref{lemembed}, fixed $p \in(2, +\infty)$, it follows that
$$
J(u) \geq \frac{1}{2}\|u\|^2-C\|u\|^p \geq \alpha>0,
$$
for some $\alpha>0$ and $\|u\|=\rho>0$ small enough.

(ii)Fix $u\in D(J) \backslash\{0\}$. Then we have
$$
J(s u)=J(u) s^2-s^2 \log s \int_{\mathbb{V}}u^2 d\mu = s^2\left(J(u)-\log{s}\int_{\mathbb{V}}u^2 d\mu\right)
$$
for all $s >0$. Thus, $J(su) \to -\infty$ as $s \to +\infty$.
\end{proof}

From Lemma \ref{mp}, we can define
$$
\Gamma:=\{\gamma\in C([0,1], X): \gamma(0)=0, J(\gamma(1))<0\}
$$
and
$$
c:=\inf _{\gamma\in \Gamma} \sup _{s \in[0,1]} J(\gamma(s)).
$$

Now, let
$$
\mathcal{N}:=\left\{u \in D(J) \backslash\{0\}:\left\langle J^{\prime}(u), u\right\rangle=0\right\}
$$
be the Nehari manifold for $J$ and
$$
 c_{\mathcal{N}}:=\inf _{u \in \mathcal{N}} J(u).
$$
Now we show the relation between the minimax level of $J$ and ground state energy of $J$ which is useful below.
\begin{lemma}\label{equa}
$
c= c_{\mathcal{N}}.
$
\end{lemma}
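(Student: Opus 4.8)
The plan is to exploit the precise fibering structure of $J$ along rays, already visible in the computation in the proof of Lemma \ref{mp}. For fixed $u\in D(J)\setminus\{0\}$ the map $t\mapsto J(tu)=t^{2}\bigl(J(u)-\log t\,\norm{u}_2^2\bigr)$ is smooth on $(0,\infty)$, tends to $0^{+}$ as $t\to 0^{+}$ and to $-\infty$ as $t\to\infty$, and has exactly one critical point $t(u)=\exp\!\bigl(J(u)/\norm{u}_2^2-\tfrac12\bigr)$, which is therefore its unique maximiser. Since $\tfrac{d}{dt}J(tu)=t^{-1}\langle J'(tu),tu\rangle$, the point $t(u)u$ is the unique element of the ray $\{tu:t>0\}$ lying on $\mathcal N$; equivalently $\langle J'(u),u\rangle=2J(u)-\norm{u}_2^2$, so that $u\in\mathcal N$ iff $J(u)=\tfrac12\norm{u}_2^2$ and $\max_{t>0}J(tu)=J(t(u)u)$. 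Consequently $c_{\mathcal N}=\inf_{u\ne0}\max_{t>0}J(tu)$, and this is the characterisation I will match against $c$.

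For $c\le c_{\mathcal N}$ I would argue by ray paths. Given $u\in\mathcal N$, choose $T>1$ so large that $J(Tu)<0$ (possible since $J(tu)\to-\infty$) and set $\gamma(s)=sTu$. Then $\gamma\in\Gamma$, and because $t(u)=1\le T$ we get $\sup_{s\in[0,1]}J(\gamma(s))=\max_{0<r\le T}J(ru)=\max_{r>0}J(ru)=J(u)$. Taking the infimum over $u\in\mathcal N$ yields $c\le c_{\mathcal N}$.

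The reverse inequality $c\ge c_{\mathcal N}$ is where the real work lies: I must show that every $\gamma\in\Gamma$ satisfies $\sup_{s}J(\gamma(s))\ge c_{\mathcal N}$, and the natural route is to prove that $\gamma$ meets $\mathcal N$, so that the supremum dominates $J$ at the intersection. Consider $\sigma(s):=\langle J'(\gamma(s)),\gamma(s)\rangle=2J(\gamma(s))-\norm{\gamma(s)}_2^2$. For small $s$ the norm $\norm{\gamma(s)}$ is small, and the logarithmic Sobolev estimate \eqref{eqlogineq3} gives $\sigma(s)\ge \norm{\gamma(s)}^{2}-\norm{\gamma(s)}_2^{2}-\norm{\gamma(s)}_2^{2}\log\norm{\gamma(s)}_2^{2}>0$ whenever $0<\norm{\gamma(s)}_2<1$, since $\norm{\gamma(s)}^2\ge\norm{\gamma(s)}_2^2$ and $-\norm{\gamma(s)}_2^2\log\norm{\gamma(s)}_2^2>0$; at the other end $\sigma(1)=2J(\gamma(1))-\norm{\gamma(1)}_2^{2}<0$ because $J(\gamma(1))<0$. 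Thus $\sigma$ changes sign, and a zero $s_{0}$ of $\sigma$ gives $\gamma(s_{0})\in\mathcal N$, whence $\sup_{s}J(\gamma(s))\ge J(\gamma(s_{0}))\ge c_{\mathcal N}$.

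The main obstacle is that $J$, and hence $\sigma$, is only \emph{lower semicontinuous} on $X$: by \eqref{add1}, $J=\Phi+\Psi$ with $\Phi\in C^{1}(X,\mathbb R)$ but $\Psi(u)=\int_{\mathbb V}F_1(u)\,d\mu$ convex, lower semicontinuous, and possibly $+\infty$. The intermediate value theorem therefore cannot be applied to $\sigma$ verbatim, since a downward jump of $\Psi$ along $\gamma$ could in principle let $\sigma$ pass from positive to negative without vanishing. I would circumvent this using the only continuity available: by Lemma \ref{sub1}(v), $\Psi$, and thus $J$ and $\sigma$, is of class $C^{1}$ on $H^{1}(\Omega)$ for every bounded domain $\Omega$, i.e. on every subspace of functions with a common compact support in the sense of Definition \ref{defcompact}. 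Since $C_c(\mathbb V)$ is dense in $X$ and, by monotone convergence, both the endpoint $\gamma(1)$ and the energies along the minimising ray paths are approximated by compactly supported functions without increasing the energy, one reduces (or approximates) the computation of $c$ to paths lying in such subspaces, where $\sigma$ is continuous and its sign change forces a genuine intersection $\gamma(s_{0})\in\mathcal N$. Combining this with the bound $c\le c_{\mathcal N}$ then gives $c=c_{\mathcal N}$.
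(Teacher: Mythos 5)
Your fibering computation and the inequality $c\le c_{\mathcal N}$ are correct and essentially identical to the paper's. The gap is in the reverse inequality. You propose to show that every $\gamma\in\Gamma$ crosses $\mathcal N$ by an intermediate value argument for $\sigma(s)=2J(\gamma(s))-\norm{\gamma(s)}_2^2$, and you rightly flag that $\sigma$ need not be continuous since $\Psi$ is only lower semicontinuous; but the proposed repair --- reducing to compactly supported paths ``by monotone convergence \dots without increasing the energy'' --- does not work. First, truncation does \emph{not} decrease the energy on a graph: for an edge $xy$ with $x$ inside and $y$ outside the truncation set, the gradient contribution changes from $(u(y)-u(x))^2$ to $u(x)^2$, which strictly increases whenever, e.g., $u(x)=u(y)\neq 0$; and $-\int_{\mathbb V}F_2(u)\,d\mu$ is not monotone under truncation either, since $F_2$ changes sign. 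Second, even granting an approximating sequence $\gamma_k\to\gamma$ of compactly supported paths each satisfying $\sup_sJ(\gamma_k(s))\ge c_{\mathcal N}$, this bound cannot be transferred back to $\gamma$: lower semicontinuity of $J$ gives estimates in the wrong direction (you would need either upper semicontinuity or the pointwise inequality $J(\gamma(s))\ge J(\gamma_k(s))$, which is exactly the failed monotonicity). A smaller issue: your claim $\norm{\gamma(s)}^2\ge\norm{\gamma(s)}_2^2$ is not automatic since $V+1$ may be smaller than $1$, though $\sigma(s)>0$ for small $\norm{\gamma(s)}_2$ still follows because $-(1+\log\norm{\gamma(s)}_2^2)>0$ there. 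As it stands, the crossing property for arbitrary $\gamma\in\Gamma$, and hence $c\ge c_{\mathcal N}$, is not established.

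The paper sidesteps paths entirely: by the nonsmooth mountain pass theorem of \cite{AD} there is a $(PS)$ sequence $(u_n)$ at level $c$; Lemma \ref{bounded} makes it bounded; a vanishing argument via \eqref{use2} and interpolation shows $\norm{u_n}_2$ is bounded away from $0$ (otherwise $J(u_n)\to 0$, contradicting $c>0$); projecting onto the Nehari manifold by $t_nu_n\in\mathcal N$ and using $\langle J'(u_n),u_n\rangle=o_n(1)$ yields $2\norm{u_n}_2^2\log t_n=o_n(1)$, hence $t_n\to 1$ and $c_{\mathcal N}\le J(t_nu_n)=t_n^2J(u_n)+o_n(1)\to c$. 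If you want to keep your route, you must prove the intersection property for general continuous paths despite the discontinuity of $\Psi$; the $(PS)$-sequence argument is precisely designed to avoid that difficulty.
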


\begin{proof}

Let $u \in D(J) \backslash\{0\}$. Then the map $s \mapsto J(s u)$ admits a unique maximum point on $(0, \infty)$. In fact, if $\varphi:(0, \infty) \rightarrow \mathbb{R}$ is the map defined by
$$
\varphi(s):=J(s u)=J(u) s^2-s^2 \log s \int_{\mathbb{V}}u^2 d \mu, \quad s>0,
$$
we have $\varphi(s)>0$ for $s>0$ sufficiently small and $\varphi(s)<0$ for all $s>0$ large enough. Moreover, $\varphi^{\prime}(s)=0$ with $s>0$ if and only if
\begin{equation}\label{eqnehari}
    J(u)=\frac{2 \log s+1}{2} \int_{\mathbb{V}} u^2 d \mu,
\end{equation}
which proves the claim.

Let $u \in \mathcal{N}$ and let us consider $J\left(t^* u\right)<0$, for some $t^*>0$. If $\gamma :[0,1] \rightarrow X$ is the continuous path $\gamma(t)=t \cdot t^* u$, then
$$
c \leq \max _{t \in[0,1]} J\left(\gamma (t)\right) \leq \max _{t \geq 0} J(t u)=J\left(t u\right)=J(u)
$$
and consequently $c \leq c_{\mathcal{N}}$.

To prove the reverse inequality, by \cite[Theorem 3.1]{AD}, there exists a $(PS)$ sequence $\left(u_n\right) \subset X$ for $J$. By Lemma \ref{bounded} , the sequence $\left(u_n\right)$ is bounded in $X$. A key point in our approach is the following result
$$
u_n\nrightarrow 0,\,\,\text{in}\,\, \ell^2(\mathbb{V}).
$$
Indeed, on the contrary, by interpolation, we would have  $u_n \rightarrow 0$ in $\ell^p(\mathbb{V})$, for some $p \in(2, +\infty)$, then, by \eqref{use2} we have
\begin{equation}\label{zero1}
\int_{\mathbb{V}}  F_2^{\prime}\left(u_n\right) u_n d \mu \rightarrow 0 .
\end{equation}
On the other hand, by \eqref{zero1}, we have
$$
\left\|u_n\right\|^2+\int_{\mathbb{V}} F_1^{\prime}\left(u_n\right) u_n d \mu=\left\langle J^{\prime}\left(u_n\right) u_n\right\rangle + \int_{\mathbb{V}} F_2^{\prime}\left(u_n\right) u_n d \mu=o_n(1)\left\|u_n\right\|+\int_{\mathbb{V}} F_2^{\prime}\left(u_n\right) u_n d \mu=o_n(1),
$$
from where it follows that $u_n \rightarrow 0$ in $X$ and $F_1^{\prime}\left(u_n\right) u_n \rightarrow 0$ in $\ell^1(\mathbb{V})$. Since $F_1$ is convex, even and $F(t) \geq F_1(0)=0$ for all $t \in \mathbb{R}$, we derive that $0 \leq F_1(t) \leq F_1^{\prime}(t) t$ for all $t \in \mathbb{R}$. Hence, $F_1\left(u_n\right) \rightarrow 0$ in $\ell^1(\mathbb{V})$, and so, $J\left(u_n\right) \rightarrow J(0)=0$, but this contradicts the fact that $c>0$. Hence there are constants $a$ and $b$ such that
\begin{equation}\label{abbe}
0<a \leq\norm{u_n}_2 \leq b, \quad \forall n \in \mathbb{N} .
\end{equation}

For each $u_n$, let $t_n>0$ be such that $t_n u_n \in \mathcal{N}$. Recalling that
\begin{equation}\label{equa001}
J\left(t_n u_n\right)=\frac{1}{2} \int_{\mathbb{V}}\left|t_n u_n\right|^2 d \mu, \quad \forall n \in \mathbb{N}
\end{equation}
or equivalently
$$
\int_{\mathbb{V}}\Big(\left|\nabla u_n\right|^2+\left(V(x)+1\right)\left|u_n\right|^2\Big)\ d \mu-\int_{\mathbb{V}} u_n^2 \log \left|t_n u_n\right|^2d \mu=\int_{\mathbb{V}}\left|u_n\right|^2 d \mu, \quad \forall n \in \mathbb{N}
$$
and
$$
\left\langle J^{\prime}\left(u_n\right), u_n \right\rangle = \int_{\mathbb{V}}\Big(\left|\nabla u_n\right|^2+V( x)\left|u_n\right|^2\Big)d \mu-\int_{\mathbb{V}} u_n^2 \log u_n^2 d \mu=o_n(1),
$$
we have that
$$
o_n(1)=2\norm{u_n}_2^2 \log t_n
$$
This equality combines with \eqref{abbe} to result $t_n \rightarrow 1$. On the other hand, by \eqref{equa001} and \eqref{eqcontroll2},
$$
\inf _{u \in \mathcal{N}} J(u) \leq J\left(t_n u_n\right)=\frac{t_n^2}{2} \int_{\mathbb{V}}\left|u_n\right|^2 d \mu=t_n^2\left(J\left(u_n\right)+o_n(1)\left\|u_n\right\|\right)=t_n^2 J\left(u_n\right)+o_n(1) .
$$
Passing to the limit in this inequality, the reverse inequality $c_{\mathcal{N}} \leq c$ holds.
\end{proof}

Now, we prove Theorem \ref{theo120} on lattice graph $\mathbb{Z}^N$.
\begin{proof}[Proof of Theorem \ref{theo120}]

From Lemma \ref{mp} and Lemma \ref{equa}, there exists a $(PS)$ sequence $\left(u_n\right) \subset X$ for $J$ with the level $c$. Moreover, from Lemma \ref{bounded} , $\left(u_n\right)$ is bounded in $X$.

If $u_n \rightarrow 0$ in $\ell^p(\mathbb{Z}^N)$, for some $p \in(2, +\infty)$, similar to the proof of Lemma \ref{equa}, we have
$u_n \rightarrow 0$ in $X$ and $F_1\left(u_n\right) \rightarrow 0$ in $\ell^1(\mathbb{Z}^N)$. Thus, $J\left(u_n\right) \rightarrow 0$, which contradicts  $c>0$.

Thus,  $u_n \nrightarrow 0$ in $\ell^p(\mathbb{Z}^N)$, for $p>2$.  From the boundedness of the sequence $\{u_n\}$ in $\ell^p(\mathbb{Z}^N)$, we have
\begin{equation}\label{03}
\varlimsup_n\left\|u_n\right\|_p=c_{1}>0
\end{equation}
for some positive constant $c_{1}$. By the interpolation inequality, we have
$$
\left\|u_n\right\|_p \leq\left\|u_n\right\|_2^{\frac{2}{p}}\left\|u_n\right\|_{\infty}^{\frac{p-2}{p}}\leq c_2^{\frac{2}{p}}\left\|u_n\right\|_{\infty}^{\frac{p-2}{p}},
$$
where $c_{2}>0$ is a constant. Thus, there exists $\eta>0$ such that
$$
\lim _{n \rightarrow \infty}\left\|u_n\right\|_{\infty}\geq \eta.
$$
Thus, there exists a sequence of $\left(y_n\right) \in \mathbb{Z}^N$ such that
$$
\vert u_n(y_n)\vert\geq \frac{\eta}{2}
$$
 for all $n\in \mathbb{N}$. Now, we define $\tilde{u}_n(x)=u_n\left(x+k_n T\right)$ with $k_n=\left(k_n^1, \ldots, k_n^N\right)$ to ensure that $\left\{y_n-k_n T\right\} \subset \Omega$ where $\Omega=[0, T)^N \cap \mathbb{Z}^N$ is a bounded domain in $\mathbb{Z}^N$. Since $V(x)$ is  $T$-periodic in $x$, $J$ and $J^{\prime}$  are invariant under the translation.  Thus, we have
$$
J\left(\tilde{u}_n\right) \rightarrow c,\,\, J^{\prime}\left(\tilde{u}_n\right) \rightarrow 0 \,\, \text { and } \,\, \tilde{u}_n \rightarrow \tilde{u} \neq 0 \,\, \text{pointwise in}\,\,  \mathbb{Z}^N.
$$
Moreover, from Lemma \ref{sub1}(vi),  we know that $\tilde{u}$ is a nontrivial critical point of $J$. Therefore,
$$
\begin{aligned}
2 c & \leq 2 J(\tilde{u} )=2 J(\tilde{u})-\left\langle J^{\prime}(\tilde{u}),\tilde{u}\right\rangle=\int_{\mathbb{Z}^N}|\tilde{u}|^2 d\mu\leq \int_{\mathbb{Z}^N} |\tilde{u}|^2 d\mu \\
& \leq \liminf _{n \rightarrow+\infty} \int_{\mathbb{Z}^N}\left|\tilde{u}_n\right|^2 d\mu \leq \limsup _{n \rightarrow+\infty} \int_{\mathbb{Z}^N}\left|\tilde{u}_n\right|^2 d\mu=2 c
\end{aligned}
$$
from where it follows that
$$
\tilde{u}_n \rightarrow \tilde{u} \quad \text { in } \quad \ell^2\left(\mathbb{Z}^N\right) .
$$
Now, by interpolation inequality on the Lebesgue spaces, we get
$$
\tilde{u}_n \rightarrow \tilde{u} \,\,\text { in } \,\, \ell^p\left(\mathbb{Z}^N\right), \,\,\forall p \in [2, +\infty) .
$$
By \eqref{use2}, one has
\begin{equation}\label{0077}
\int_{\mathbb{Z}^N} F_2^{\prime}\left(\tilde{u}_n\right) \tilde{u}_n d\mu \rightarrow \int_{\mathbb{Z}^N} F_2^{\prime}\left(\tilde{u}\right) \tilde{u}d\mu\,\,\, \text {as }\,\, n \rightarrow \infty .
\end{equation}
Moreover, using the fact $\left\langle J^{\prime}\left(\tilde{u}_n\right), \tilde{u}_n\right\rangle=o_n(1)$, that is,
$$
\int_{\mathbb{Z}^N}\left(\left|\nabla \tilde{u}_n\right|^2+(V(x)+1)\left|\tilde{u}_n\right|^2\right) d\mu+\int_{\mathbb{Z}^N} F_1^{\prime}\left(\tilde{u}_n\right) \tilde{u}_n d\mu=\int_{\mathbb{Z}^N} F_2^{\prime}\left(\tilde{u}_n\right) \tilde{u}_nd\mu+o_n(1) .
$$
From $\left\langle J^{\prime}\left(\tilde{u}\right), \tilde{u}\right\rangle=0$ and \eqref{0077}, we deduce that
$$
\begin{aligned}
&\int_{\mathbb{Z}^N}\left(\left|\nabla \tilde{u}_n\right|^2+(V(x)+1)\left|\tilde{u}_n\right|^2\right) d\mu+\int_{\mathbb{Z}^N} F_1^{\prime}\left(\tilde{u}_n\right) \tilde{u}_n d\mu\\
= & \int_{\mathbb{Z}^N}\left(\left|\nabla \tilde{u}\right|^2+(V(x)+1)\left|\tilde{u}\right|^2\right) d\mu+\int_{\mathbb{Z}^N} F_1^{\prime}\left(\tilde{u}\right) \tilde{u}d\mu+o_n(1),
\end{aligned}
$$
from where it follows that, for some subsequence,
$$
\tilde{u}_n \rightarrow \tilde{u}\,\, \text { in } \,\, X
$$
and
$$
F_1^{\prime}\left(\tilde{u}_n\right) \tilde{u}_n \rightarrow F_1^{\prime}\left(\tilde{u}\right) \tilde{u} \,\, \text { in } \,\, \ell^1\left(\mathbb{Z}^N\right).
$$
Since $F_1$ is convex, even and $F(0)=0$, we have that $F_1^{\prime}(t) t \geq F_1(t) \geq 0$ for all $t \in \mathbb{R}$. Thus, the last limit together with Lebesgue dominated convergence theorem yields
$$
F_1\left(\tilde{u}_n\right) \rightarrow F_1\left(\tilde{u}\right) \,\,\text {in } \,\, \ell^1\left(\mathbb{Z}^N\right).
$$
Thus, $\tilde{u}$ is a critical point of $J$ with $J(\tilde{u})=c$.

Finally, we show that the ground state solution $u$  has the constant sign, that is, $u(x)>0$ for all $x\in \mathbb{Z}^N$ or $u(x)<0$ for all $x\in \mathbb{Z}^N$. First of all,  since $\abs{u(x) - u(y)}^2 \geq u^2(x) + u^2(y)$ for all $xy \in \mathbb{E}$ with $u(x)u(y) \leq 0$, we have
    \[
            J(u) \geq J(u^{+}) + J(u^{-})
    \]
    for all $u \in D(J)$, where $u^{+}= \max\{u, 0\}$ and $u^{-} = \min\{u, 0\}$.

   Let $u$ be the ground state solution of \eqref{loga}. Argue by contradiction, suppose $u^{+}, u^{-}\neq 0$. From \eqref{equa001}, there exists unique  $t_\pm$ be such that $t_\pm u^{\pm}\in \mathcal{N}$. Recalling that $J(u) = \frac{1}{2}\int_{\mathbb{Z}^N} u^2 d \mu$ and $J(t_\pm u^{\pm}) = \frac{t_\pm^2}{2}\int_{\mathbb{Z}^N} \abs{u^{\pm}}^2 d \mu$. Then we obtain $t_\pm>1$, since $J(t_\pm u^{\pm}) \geq c_\mathcal{N} = J(u)$ and $\int_{\mathbb{Z}^N} u^2 d \mu >\int_{\mathbb{Z}^N} \abs{u^{\pm}}^2 d \mu$. Moreover, by \eqref{eqnehari}, we have $J(u^{\pm})=\frac{2 \log t_\pm+1}{2} \int_{\mathbb{Z}^N} \abs{u^{\pm}}^2 d \mu$. Thus
    \[
            J(u) < J(u^{+}) + J(u^{-}),
    \]
     which is a contradiction. Thus, $u$ does not change sign.

     If $u(x_0)=0$ for some fixed $x_0 \in \mathbb{Z}^N$,
    then \eqref{loga} yields that
    $u(x) = 0$ for all $x \sim x_0$. Since $\mathbb{Z}^N$ is connected, we obtain $u(x) \equiv 0$, which leads to a contradiction with $J(u) = c_\mathcal{N} > 0$.
\end{proof}

\begin{remark}\label{positi}
In the continuous case $\mathbb{R}^N$, from \cite{SS},  we know that the ground state solution $u$ for problem \eqref{loga1} has constant sign.
Assume that $u^{+}=\max\{u, 0\}$ and $u^{-}=\max\{-u, 0\}$, then $u=u^{+}-u^{-}$. By direct computation,  $J(u)=J\left(u^{+}\right)+J\left(u^{-}\right)$ and $u^{+}, u^{-} \in D(J)$. Moreover, let $\mathcal{N}$ be Nehari manifold and $c$ be ground state energy of problem \eqref{loga1}, $0=\left\langle J^{\prime}(u), u^{+}\right\rangle=\left\langle J^{\prime}\left(u^{+}\right), u^{+}\right\rangle$, so either $u^{+} \in \mathcal{N}$ or $u^{+}=0$. A similar conclusion holds for $u^{-}$. Hence, either one of the functions $u^{+}, u^{-}$is equal to 0 or $J(u) \geq 2 c$, which yields a contradiction. Suppose $u=u^{+}$. Then, by a slight variant of the argument in \cite[Section 3.1]{DMS} it follows from the maximum principle (see \cite[Theorem 1]{V}) that $u(x)>0$, for a.e. $x \in \mathbb{R}^N$. However, for problem \eqref{loga} on lattice graph, the previous discussion breaks down. This is mainly because, in general,  $ \int_{\mathbb{Z}^N}|\nabla u|^2 d \mu=\int_{\mathbb{Z}^N}|\nabla (u^{+}-u^{-})|^2 d \mu\neq \int_{\mathbb{Z}^N}|\nabla (u^{+})|^2 d \mu+\int_{\mathbb{Z}^N}|\nabla (u^{-})|^2 d \mu$, further leading to $J(u)\neq J\left(u^{+}\right)+J\left(u^{-}\right)$. Therefore, to show that the ground state solution on the graph has the constant sign, our proof completely differs from the continuous case.
\end{remark}

\section{The asymptotically periodic case and the potential well case}
In this section, we firstly consider  the existence of ground states for problem \eqref{loga} with the asymptotically periodic potential $V$.
Let
$$
\Gamma:=\{\gamma \in C([0,1], X): \gamma(0)=0, J(\gamma(1))<0\},
$$
and
$$
c:=\inf _{\gamma \in \Gamma} \sup _{s \in[0,1]} J(\gamma(s)), \quad c_{\mathcal{N}}:=\inf _{u \in \mathcal{N}} J(u).
$$
Clearly, $c \leq c_{\mathcal{N}}$ by $(V_{1})$.  Replacing $V$ by $T$-periodic potential  $V_p$, we obtain a limiting problem
\begin{equation}\label{equp}
    -\Delta u+V_{p}(x) u=u \log u^{2}, \quad x \in \mathbb{Z}^{N}.
\end{equation}
The energy functional corresponding to \eqref{equp} is $J_p: H^1\left(\mathbb{Z}^N\right) \rightarrow(-\infty,+\infty]$ defined by
\[
J_p(u)=\frac{1}{2} \int_{\mathbb{Z}^N}\left(|\nabla u|^2+(V_p(x)+1) u^2\right) d \mu - \frac{1}{2} \int_{\mathbb{Z}^N} u^2 \log u^2 d \mu,
\]
and the associated Nehari manifold is
\[
\mathcal{N}_p := \left\{u \in D(J_p) \backslash\{0\}:\left\langle J_p^{\prime}(u), u\right\rangle=0\right\}.
\]

Now, we have the following important lemma which is useful below.
\begin{lemma}\label{Lemasy}On lattice graph $\mathbb{Z}^N$,
\begin{enumerate}[label=(\roman*)]
\item If $V \not\equiv V_p$, then $c_{\mathcal{N}}<c_{p}$, where $c_{p}:=\inf _{u \in \mathcal{N}_{p}} J_{p}(u)$.
\item If $J\left(u_{n}\right) \rightarrow d \in\left(0, c_{p}\right)$ and $J^{\prime}\left(u_{n}\right) \rightarrow 0$, then $u_{n} \rightharpoonup u \neq 0$ after passing to a subsequence, $u$ is $a$ critical point of $J$ and $J(u) \leq d$.
\end{enumerate}
\end{lemma}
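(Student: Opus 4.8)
The plan is to handle the two parts separately, using the periodic problem \eqref{equp} as a comparison throughout. The basic observation is that, since $V \leq V_p$ by \eqref{V1}, one has
\[
J(v) = J_p(v) - \frac{1}{2}\int_{\mathbb{Z}^N}(V_p - V)v^2\,d\mu \leq J_p(v), \quad \forall v \in D(J),
\]
and that, by Theorem \ref{theo120} applied to $V_p$, the periodic functional $J_p$ admits a ground state $u_p \in \mathcal{N}_p$ with $J_p(u_p) = c_p$ which never vanishes on $\mathbb{Z}^N$.

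For part (i), I would exploit the fibering structure from Lemma \ref{equa}: for each direction there is a unique positive multiple landing on the Nehari manifold, and it realizes the maximum of the functional along the ray. Since $u_p \in D(J)$ (because $\int F_1(u_p)<\infty$ is a potential-free condition), let $t^* > 0$ be such that $t^* u_p \in \mathcal{N}$. Then
\[
c_{\mathcal{N}} \leq J(t^* u_p) = J_p(t^* u_p) - \frac{(t^*)^2}{2}\int_{\mathbb{Z}^N}(V_p - V)u_p^2\,d\mu \leq c_p - \frac{(t^*)^2}{2}\int_{\mathbb{Z}^N}(V_p - V)u_p^2\,d\mu,
\]
where the last step uses $J_p(t^* u_p) \leq \max_{s>0} J_p(s u_p) = J_p(u_p) = c_p$. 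As $V \not\equiv V_p$ with $V \leq V_p$, there is a vertex where $V_p - V > 0$, and since $u_p$ is nowhere zero the integral is strictly positive; hence $c_{\mathcal{N}} < c_p$.

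For part (ii), boundedness of $(u_n)$ in $X$ comes from Lemma \ref{bounded}, so by Lemma \ref{lemembed} a subsequence satisfies $u_n \rightharpoonup u$ in $X$ and $u_n(x) \to u(x)$ for every $x$. By Lemma \ref{sub1}(vi), $u$ is a critical point of $J$, so $\langle J'(u),u\rangle=0$ and the identity $2J(w) - \langle J'(w),w\rangle = \norm{w}_2^2$ gives $J(u) = \frac{1}{2}\norm{u}_2^2$; the same identity together with $\langle J'(u_n), u_n\rangle = o_n(1)$ yields $\norm{u_n}_2^2 = 2J(u_n) + o_n(1) \to 2d$. Weak lower semicontinuity of the $\ell^2$-norm then gives the energy bound
\[
J(u) = \frac{1}{2}\norm{u}_2^2 \leq \frac{1}{2}\liminf_n \norm{u_n}_2^2 = d,
\]
so it only remains to prove $u \neq 0$, which is the main obstacle since the embedding $X \hookrightarrow \ell^p$ is not compact in the bounded-potential regime.

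To show $u\neq 0$ I argue by contradiction and suppose $u = 0$. First I rule out vanishing exactly as in Lemma \ref{equa}: if $u_n \to 0$ in some $\ell^p$, $p>2$, then \eqref{use2} forces $\int_{\mathbb{Z}^N}F_2'(u_n)u_n \to 0$, hence $\norm{u_n}\to 0$ and $J(u_n)\to 0$, contradicting $d>0$. Therefore $\limsup_n \norm{u_n}_\infty \geq \eta>0$, and the interpolation argument of Theorem \ref{theo120} produces $y_n$ with $\abs{u_n(y_n)} \geq \eta/2$; since $u_n \to 0$ pointwise, necessarily $\abs{y_n}\to\infty$. Writing $y_n = k_n T + r_n$ with $r_n \in \Omega := [0,T)^N \cap \mathbb{Z}^N$ constant along a subsequence, set $\tilde u_n := u_n(\cdot + k_n T)$, so $\tilde u_n \rightharpoonup \tilde u$ with $\tilde u \neq 0$. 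The key step is to test $J'(u_n)$ against translated functions $\varphi(\cdot - k_n T)$ for $\varphi \in C_c(\mathbb{Z}^N)$: using $T$-periodicity of $V_p$, the decay $\abs{V(\cdot + k_n T) - V_p}\to 0$ from \eqref{V1}, the pointwise convergence of $\tilde u_n$, and the finite support of $\varphi$, I would pass to the limit in Lemma \ref{sub1}(iv) to obtain $\langle J_p'(\tilde u), \varphi\rangle = 0$ for all such $\varphi$. Thus $\tilde u$ is a nontrivial critical point of $J_p$, hence $\tilde u \in \mathcal{N}_p$, and translation invariance of the $\ell^2$-norm gives
\[
c_p \leq J_p(\tilde u) = \frac{1}{2}\norm{\tilde u}_2^2 \leq \frac{1}{2}\liminf_n \norm{\tilde u_n}_2^2 = \frac{1}{2}\liminf_n \norm{u_n}_2^2 = d < c_p,
\]
a contradiction. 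Hence $u \neq 0$, and part (ii) follows. The hard part is this last step: controlling the loss of compactness by transferring the escaping concentration profile to the periodic limit problem, where the gap $d < c_p$ produces the contradiction.
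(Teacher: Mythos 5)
Your proposal is correct and follows essentially the same route as the paper: part (i) is the identical comparison $c_{\mathcal N}\le J(t^*u_p)<J_p(t^*u_p)\le J_p(u_p)=c_p$ using the nowhere-vanishing periodic ground state, and part (ii) is the same vanishing/non-vanishing dichotomy followed by translation of the escaping mass to the periodic limit problem and the contradiction $c_p\le J_p(\tilde u)\le d<c_p$. The only cosmetic difference is that you identify $\tilde u$ as a critical point of $J_p$ by testing $J'(u_n)$ against translated compactly supported functions, whereas the paper first shows $J'(u_n)-J_p'(u_n)\to 0$ in operator norm via H\"older and then invokes translation invariance together with Lemma \ref{sub1}(vi); both are valid.
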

\begin{proof}
(i) Since $V \not\equiv V_p$ and $V_p(x) \geq V(x)$ for all $x \in \mathbb{Z}^N$, there exists $x_0 \in \mathbb{Z}^N$ such that $V_p(x_0) > V(x_0)$. It follows from Theorem \ref{theo120} that there exists a strictly positive (or negative) ground state solution $u_p$ for periodic problem \eqref{equp} such that $J_p(u_p) = c_p$.
Without loss of generality, here we can assume $u_p(x) > 0$ for all $x \in \mathbb{Z}^N$.
Let $s_p > 0$ be such that $s_pu_p \in \mathcal{N}$, then we obtain
\[
     J_p(s_pu_p) -  J(s_pu_p) \geq \frac{s_p^2}{2}\left(V_p(x_0) - V(x_0)\right)\abs{u_p(x_0)}^2 > 0.
\]
Thus,
 \[
        c_p = J_p(u_p) \geq J_p(s_pu_p) >  J(s_pu_p) \geq c_{\mathcal{N}}.
\]

(ii)By Lemma \ref{bounded}, $\{u_n\}$ is a bounded $(PS)$ sequence in  $X$ with the level $d$. Using Lemma \ref{lemembed}, we obtain $u_{n} \rightharpoonup u$ in $X, u_{n}(x) \rightarrow u(x)$ for all $x \in \mathbb{Z}^N$ after passing to a subsequence and, according to Lemma \ref{sub1} (vi), $u$ is a critical point of $J$. By Fatou's lemma,
$$
\begin{aligned}
d & =J\left(u_n\right)-\frac{1}{2}\left\langle J^{\prime}\left(u_n\right), u_n\right\rangle+o(1)=\frac{1}{2} \int_{\mathbb{Z}^N} u_n^2 d \mu+o(1) \\
& \geq \frac{1}{2} \int_{\mathbb{Z}^N} u^2 d \mu+o(1)=J(u)-\frac{1}{2}\left\langle J^{\prime}(u), u\right\rangle+o(1)=J(u)+o(1) .
\end{aligned}
$$
So $J(u) \leq d$ and it remains to show that $u \neq 0$. Argue by contradiction, suppose $u=0$. Since $u_n(x) \rightarrow 0$ for all $x \in \mathbb{Z}^N$ and $V(x) \rightarrow V_{p}(x)$ as $|x| \rightarrow \infty$,
$$
J\left(u_n\right)-J_{p}\left(u_n\right)=\frac{1}{2} \int_{\mathbb{Z}^N}\left(V(x)-V_{p}(x)\right) u_n^2 d \mu \rightarrow 0
$$
and therefore $J_{p}\left(u_n\right) \rightarrow d$. Using Lemma \ref{equi1} and the H\"{o}lder inequality and taking $w$ with $\|w\|=1$, we obtain
$$
\begin{aligned}
\left|\left\langle J^{\prime}\left(u_n\right)-J_{p}^{\prime}\left(u_n\right), w\right\rangle\right| & \leq \int_{\mathbb{Z}^N}\left(V_{p}(x)-V(x)\right)\left|u_n\right||w| d \mu \\
& \leq C\left(\int_{\mathbb{Z}^N}\left(V_{p}(x)-V(x)\right) u_n^2 d \mu\right)^{1 / 2} .
\end{aligned}
$$
As the right-hand side tends to 0 uniformly in $\|w\|=1, J^{\prime}\left(u_n\right)-J_{p}^{\prime}\left(u_n\right) \rightarrow 0$ and hence $J_{p}^{\prime}\left(u_n\right) \rightarrow 0$. Argue as the proof of Theorem \ref{theo120}, we obtain a sequence of $\left\{y_n\right\} \subset \mathbb{Z}^N$ and $\eta>0$ such that
$$
\vert u_n(y_n)\vert\geq \frac{\eta}{2} > 0
$$
 for all $n\in \mathbb{N}$. Since $u(x) = 0$ for all $x \in \mathbb{Z}^N$,  we have $\abs{y_n} \to \infty$. Let $v_{n}(x)=u_n\left(x+k_n T\right)$ with $k_n=\left(k_n^1, \ldots, k_n^N\right)$ to ensure that $\left\{y_n-k_n T\right\} \subset \Omega$, where  $\Omega := [0, T)^N \cap \mathbb{Z}^N$ is a bounded domain in $\mathbb{Z}^N$. Then, since $\norm{v_n}_2 = \norm{u_n}_2$ for all $n \in \mathbb{N}$, $\{v_n\}$ is bounded in $X$ by Lemma \ref{equi1}. Moreover, using Lemma \ref{lemembed}, we obtain $v_{n} \rightharpoonup v \neq 0$ in $X$ and $v_{n}(x) \to v(x)$ for all $x \in \mathbb{Z}^N$ after passing to a subsequence. Since $V_p(x)$ is $T$-periodic in $x$, $J_p$ and $J_p^{\prime}$ are invariant under the translation.
 Thus, we have
$$
J_p\left(v_n\right) \rightarrow d,\,\, J_p^{\prime}\left(v_n\right) \rightarrow 0.
$$
From Lemma \ref{sub1}(vi), we know that $v$ is a nontrivial critical point of $J_{p}$, and thus $J_{p}(v) \leq d< c_p$ which is a contradiction.
\end{proof}
\begin{proof}[Proof of Theorem \ref{theo130}]
If $V \equiv V_p$, then we obtain a ground state solution $u_p$ by Theorem \ref{theo120}. Therefore, without loss of generality, here we assume  $V(x_0) < V_p(x_0)$ for
some $x_0 \in \mathbb{Z}^N$.

From Lemma \ref{bounded} and Lemma \ref{mp}, there exists a bounded $(PS)$ sequence $\left\{u_n\right\} \subset X$ for $J$ with the level $c \in (0, c_p)$. By Lemma \ref{Lemasy}(ii), we obtain a critical point $u \neq 0$ of $J$ such that $J(u) \leq c$. So $u \in \mathcal{N}$. Since $c \leq c_{\mathcal{N}}$, it is easy to see that $c=c_{\mathcal{N}}$ and $u$ is a ground state solution of problem \eqref{loga}.

Moreover, arguing as in the proof of Theorem \ref{theo120}, we know $u(x)>0$ for all $x\in \mathbb{Z}^N$ or $u(x)<0$ for all $x\in \mathbb{Z}^N$.
\end{proof}
Next, we can regard the bounded potential as a special case of asymptotically periodic potential on lattice graph $\mathbb{Z}^N$.
Let $V$ be a bounded potential, i.e.,
$$
-1 \leq \inf _{x \in \mathbb{Z}^N} V(x) \leq \sup _{x \in \mathbb{Z}^N} V(x)=V_{\infty}<\infty \quad \text { with } \quad V_{\infty}=\lim _{|x| \rightarrow \infty} V(x).
$$
It is clear that $V_\infty$ is $1$-periodic, since $V_\infty\left(x+ e_i\right)=V_\infty, \forall x \in \mathbb{Z}^N, 1 \leq$ $i \leq N$, where $e_i$ is the unit vector in the $\mathrm{i}$-th coordinate.
\begin{proof}[Proof of Theorem \ref{theo140}]
Since $V_\infty$ is $1$-periodic,
$$
-1< \inf _{x\in\mathbb{Z}^N} V(x) \leq V(x) \leq V_\infty, \,\,\text{for all}\,\, x \in \mathbb{Z}^N,
$$
and
$$
\left\vert V(x)-V_\infty \right\vert \rightarrow 0 \,\,\text { as } \,\,|x| \rightarrow+\infty,
$$
Therefore, by considering  $V_\infty$ as $V_{p}$ in Theorem \ref{theo130},  we conclude the proof.
\end{proof}
Finally, we extend our results to a quasi-transitive graph $G$.
\begin{proof}[Proof of Theorem \ref{theo150}]
We modify our proofs of Theorem \ref{theo120}, \ref{theo130} and \ref{theo140}. Let $G/\Gamma  = \{\rho_1,...,\rho_m\}$ be the set of finitely many orbits, $\Omega = \{z_1,...,z_m\} \subset \mathbb{V}$ where $z_i \in \rho_i$, $1 \leq i \leq m$. We replace the translations in our proofs by $\Gamma$-action on functions respectively. For example, we replace $\{x_n - y_n\}$ with $\{g_n\} \subset \Gamma$ such that $\{g_n(x_n)\}\in \Omega$ and $\{x_n + y_n\}$ with $\{g^{-1}_n(x_n)\}$.
\end{proof}

\section{Extension to the $p$-Laplacian}
The graph $p$-Laplacian of $u \in C(\mathbb{V})$ is defined by
$$
\Delta_p u(x) := \sum_{y\sim{x}}\abs{u(y)-u(x)}^{p-2}(u(y)-u(x)).
$$
Let
\[
|\nabla u(x)|_p:=\left( \frac{1}{2} \sum_{y \in \mathbb{V}, y \sim x}\abs{u(y)-u(x)}^p\right)^{1 / p}
\]
and $W^{1,p}(\mathbb{V})$ be the completion of $C_c(\mathbb{V})$ under the norm
$$
\|u\|:= \left( \sum_{x \in \mathbb{V}}\abs{\nabla u(x)}_p^p + \sum_{x \in \mathbb{V}} \abs{u(x)}^p\right)^{1 / p} = \left(\frac{1}{2} \sum_{x \in \mathbb{V}} \sum_{y \sim x}\abs{u(y)-u(x)}^p+\sum_{x \in \mathbb{V}} \abs{u(x)}^p\right)^{1 / p}.
$$
For any function $u,v \in C(\mathbb{V})$, define
\[
\Gamma_p(u,v)(x) := \frac{1}{2} \sum_{y\sim{x}}\abs{u(y)-u(x)}^{p-2}(u(y)-u(x))(v(y)-v(x)).
\]

In this section, we intend to extend the previous results to the following $p$-Laplacian equation
\begin{equation}\label{pequation}
-\Delta_p u(x)+V(x)|u|^{p-2} u=|u|^{p-2} u \log |u|^p, \quad x \in \mathbb{V},
\end{equation}
where $p > 1$. The energy functional corresponding to \eqref{pequation} is
$$
J(u):=\frac{1}{p} \int_{\mathbb{V}}\left(|\nabla u|_p^p+(V(x)+1)|u|^p\right) d\mu-\frac{1}{p} \int_{\mathbb{V}}|u|^p \log |u|^p d\mu,
$$
If $G=(\mathbb{V},\mathbb{E})$ is the lattice graph $\mathbb{Z}^N$, then $J(u)$ is not of class $C^{1}$ in $W^{1,p}(\mathbb{Z}^N)$. One can find a function $u\in W^{1,p}(\mathbb{Z}^N)$ such that $\int_{\mathbb{V}}|u|^p \log |u|^p d\mu = - \infty$ in the appendix.

Since the argument on \eqref{eqlambda} holds for \eqref{pequation}, we still assume that $\underset{x\in \mathbb{Z}^N}\inf V(x)> -1$ in the rest of this section.

Our work space is as follows
\begin{equation}\label{eqnorm}
    X:=\left\{u \in W^{1,p}\left(\mathbb{V}\right): \int_{\mathbb{V}}\left(|\nabla u|_p^{p}+(V(x)+1) \abs{u}^{p}\right) d\mu<\infty\right\}
\end{equation}
with norm
$$
\norm{u}:=\left(\int_{\mathbb{V}}\left(|\nabla u|_p^{p}+(V(x)+1) \abs{u}^{p}\right) d\mu\right)^\frac{1}{p}.
$$

Similar to Theorem \ref{theo110}, we have the following result.
\begin{theorem}\label{theo111}
If $\lim _{d(x,x_0) \rightarrow \infty} V(x) = +\infty$ for some $x_0 \in \mathbb{V}$, then equation \eqref{pequation} has infinitely many solutions $\pm u_n$ such that $\lim _{n \rightarrow \infty} J\left( \pm u_n\right)=\infty$.
\end{theorem}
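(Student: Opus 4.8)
The plan is to run the proof of Theorem \ref{theo110} again, replacing the quadratic structure by the $p$-homogeneous one at every step. First I would record the $\ell^p$ analogue of the logarithmic Sobolev inequality of Lemma \ref{logaine2}: if $u \in \ell^p(\mathbb{V})$ with $\norm{u}_p = 1$, then $\abs{u(x)}^p \leq 1$ for every $x$, so $\abs{u(x)}^p\log\abs{u(x)}^p \leq 0$ and hence $\int_{\mathbb{V}} \abs{u}^p \log\abs{u}^p\, d\mu \leq 0$; by $p$-homogeneity this upgrades to $\int_{\mathbb{V}} \abs{u}^p \log\abs{u}^p\, d\mu \leq \norm{u}_p^p \log\norm{u}_p^p$ for all $u \in \ell^p(\mathbb{V})$, exactly as in \eqref{eqlogineq3}. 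Next I would rebuild the nonsmooth framework: set $F_1(s) = -\frac1p\abs{s}^p\log\abs{s}^p$ for $0<\abs{s}<\delta$ and extend it to an even, convex, nonnegative $C^1$ function for $\abs{s}\geq\delta$ (convexity near $0$ holds for every $p>1$, since the second derivative of $-s^p\log s$ tends to $+\infty$ as $s\to 0^+$), put $F_2(s) = \frac1p\abs{s}^p\log\abs{s}^p + F_1(s)$, and verify $F_1'(s)s\geq 0$ together with a growth bound $\abs{F_2'(s)}\leq C\abs{s}^{q-1}$ for a fixed $q\in(p,\infty)$. Writing $J = \Phi + \Psi$ with $\Phi(u) = \frac1p\norm{u}^p - \int_{\mathbb{V}} F_2(u)\,d\mu \in C^1(X,\mathbb{R})$ and $\Psi(u) = \int_{\mathbb{V}} F_1(u)\,d\mu$ convex and lower semicontinuous, the subdifferential calculus, the identification of critical points as pointwise solutions, and the $(PS)$ inequality \eqref{eqps} all transfer, giving the $p$-Laplacian versions of Lemma \ref{sub1} and Remark \ref{repoint}.

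The second step is the Palais--Smale condition, mirroring Proposition \ref{propcompactps}. Using the coercivity of $V$ and the compact embedding $X\hookrightarrow\ell^q(\mathbb{V})$ for $q\in[p,\infty]$ (the $W^{1,p}$ analogue of \cite[Lemma 2.2]{Gri1}), I would first show boundedness: testing with $u_n$ gives $\norm{u_n}_p^p = pJ(u_n) - \langle J'(u_n),u_n\rangle \leq pd + o(1)\norm{u_n}$, and combining this with $\int_{\mathbb{V}}\abs{u_n}^p\log\abs{u_n}^p\,d\mu \leq \norm{u_n}_p^p\log\norm{u_n}_p^p \leq C_1(1+\norm{u_n}^r)$ for some $r\in(1,p)$ yields $\norm{u_n}^p \leq pd + C_1(1+\norm{u_n}^r)$, forcing $\norm{u_n}$ to stay bounded. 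Compactness of the embedding, weak lower semicontinuity of $\Psi$ and of $\norm{\cdot}$, and the substitution $v=u$ in \eqref{eqps} then give a strongly convergent subsequence precisely as in Proposition \ref{propcompactps}.

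The final step is the Fountain-theorem argument. I would reuse the filtration $X = \overline{\cup_k X_k}$, $Z_k = X_k^\perp$, the sets $B_k$, $N_k$, and the intersection Lemma \ref{lemgamma}. For the geometry, the scaling computation $J(sw) = \frac{s^p}{p}\bigl(\norm{w}^p - p\log s - \int_{\mathbb{V}}\abs{w}^p\log\abs{w}^p\,d\mu\bigr)$ for $\norm{w}_p=1$ shows $J(sw)\to -\infty$ uniformly on the finite-dimensional $X_k$, so $a_k\leq 0$ for $\rho_k$ large. For $b_k\to\infty$, set $\beta_k = \sup\{\norm{u}_p : u\in Z_{k-1},\ \norm{u}=1\}$, which tends to $0$; then on $Z_{k-1}$ the bound $\norm{u}_p^p\log\norm{u}_p^p \leq C_1\norm{u}_p^q + C_2$ with $q\in(p,\infty)$ gives $J(u)\geq \frac1p\norm{u}^p - \frac{C_1}{p}\beta_k^q\norm{u}^q - \frac{C_2}{p}$, and choosing $r_k = 1/\beta_k$ makes $\beta_k^q r_k^q = 1$ while $\frac1p r_k^p = \frac1p\beta_k^{-p}\to\infty$, so $b_k\to\infty$. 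The deformation Proposition \ref{propdefor1} and the minimax levels $d_k\geq b_k$ then produce the critical points $\pm u_n$ with $J(\pm u_n)=d_k\to\infty$.

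I expect the main obstacle to be the construction of the nonsmooth variational framework for the $p$-Laplacian, namely re-proving the subdifferential structure and the pointwise-solution identification (the $p$-analogues of Lemma \ref{sub1} and Remark \ref{repoint}) and establishing the compact embedding $X\hookrightarrow\ell^q$ under coercive $V$ in the $W^{1,p}$ scale, since the Hilbert-space computations of Section \ref{secframe} are no longer available and must be replaced by monotonicity and convexity arguments adapted to the $p$-homogeneous operator.
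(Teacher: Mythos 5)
Your overall strategy is the one the paper follows: the $\ell^p$ logarithmic Sobolev inequality, the decomposition $J=\Phi+\Psi$ with $F_1,F_2$ modified by replacing $-\tfrac12 s^2\log s^2$ with $-\tfrac1p|s|^p\log|s|^p$, the boundedness and compactness of $(PS)$ sequences via the coercive potential, and the Fountain-type minimax over $B_k$ and $N_k$. However, there is one genuine gap, and it is precisely the point the paper singles out as requiring care: you propose to ``reuse the filtration $X=\overline{\cup_k X_k}$, $Z_k=X_k^\perp$.'' For $p\neq 2$ the space $X\subset W^{1,p}(\mathbb{V})$ is not a Hilbert space, so $X_k^\perp$ is not defined, and Lemma \ref{lemgamma}, which is Willem's intersection lemma and is stated for an orthogonal splitting, cannot be invoked as is. The paper replaces this by the Schauder basis $\{\delta_x\}_{x\in\mathbb{V}}$ of $X$ (Lemma \ref{lembasis}, available because $W^{1,p}(\mathbb{V})$ and $\ell^p(\mathbb{V})$ are equivalent), takes $X_k=\operatorname{span}\{e_1,\dots,e_k\}$ and $Z_k=\operatorname{cl\,span}\{e_{k+1},e_{k+2},\dots\}$, and uses the biorthogonal functionals $e_k^*$ together with the Banach-space version of the intersection lemma from \cite[Proposition 4.6]{SW}. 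The totality of the $e_k^*$ is also what rescues your claim that $\beta_k\to 0$: in the Hilbert case this followed from $u_k\in X_{k-1}^\perp$, $\|u_k\|=1$ forcing $u_k\rightharpoonup 0$, whereas here one argues that any weak limit $u$ of $u_k\in Z_{k-1}$ satisfies $\langle e_j^*,u\rangle=0$ for all $j$ and hence $u=0$. Without this replacement the key step $b_k\to\infty$ and the applicability of the intersection lemma are both unjustified.

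A minor point: your justification for the convexity of $F_1$ near $0$ (``the second derivative of $-s^p\log s$ tends to $+\infty$ as $s\to 0^+$'') is not literally true for $p>2$, where $F_1''(s)=s^{p-2}\bigl(-p(p-1)\log s-(2p-1)\bigr)\to 0^+$; what matters, and what does hold for every $p>1$, is only that $F_1''(s)>0$ for $s$ small. The rest of your outline (the $(PS)$ analysis with $\Gamma_p$, the scaling computation for $a_k\le 0$, and the estimate $J(u)\ge \tfrac1p\|u\|^p-C_1\beta_k^q\|u\|^q-C_2$ with $q\in(p,\infty)$ and $r_k=1/\beta_k$) matches the paper's adaptation.
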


If the potential $V$ is coercive, then $X$ is a proper subspace of $W^{1,p}\left(\mathbb{V}\right)$, and $X$ is compactly embedded in $\ell^{q}\left(\mathbb{V}\right)$ for all $p \leq q \leq \infty$, see \cite[Lemma 2.1]{Shao}.

To prove Theorem \ref{theo111}, we need to prove a $p$-logarithmic Sobolev inequality on locally finite graphs.
\begin{lemma}\label{logaineq1}
For all $u \in \ell^p(\mathbb{V})$ such that $\norm{u}_p = 1$, there holds
    \begin{equation}
    \label{eqlogineqp1}
        \int_{\mathbb{V}}{\abs{u}^p\log{\abs{u}^p}}\, d\mu \leq 0.
    \end{equation}
\end{lemma}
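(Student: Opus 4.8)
The plan is to mimic the proof of Lemma~\ref{logaine2} almost verbatim, replacing the exponent $2$ by $p$; the only structural fact used is that a unit $\ell^p$ norm confines every coordinate to modulus at most one, where the logarithmic factor is automatically nonpositive. First I would take $u \in \ell^p(\mathbb{V})$ with $\norm{u}_p = 1$, so that $\sum_{x \in \mathbb{V}} \abs{u(x)}^p = 1$. Since every summand is nonnegative, this immediately yields $\abs{u(x)}^p \leq 1$, and hence $\abs{u(x)} \leq 1$, for every $x \in \mathbb{V}$.

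Next I would argue pointwise. Writing $s = s(x) := \abs{u(x)}^p \in [0,1]$, the elementary inequality $s \log s \leq 0$ valid for all $s \in [0,1]$ (with the usual convention $0 \log 0 = 0$) gives $\abs{u(x)}^p \log \abs{u(x)}^p \leq 0$ at every vertex $x$ with $u(x) \neq 0$, and the quantity vanishes where $u(x) = 0$. Summing this nonpositive quantity over all $x \in \mathbb{V}$ produces $\int_{\mathbb{V}} \abs{u}^p \log \abs{u}^p \, d\mu \leq 0$, which is precisely \eqref{eqlogineqp1}.

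The only point requiring a word of care is that the integral (i.e.\ the series) be meaningful: since each term is $\leq 0$, the sum either converges to a finite nonpositive value or diverges to $-\infty$, and in both cases the asserted inequality holds, so no hypothesis beyond $u \in \ell^p(\mathbb{V})$ is needed. There is thus no genuine obstacle here—the normalization $\norm{u}_p = 1$ does all the work by trapping the pointwise values in $[-1,1]$, exactly as in the case $p = 2$. Finally, as with the scale-invariant upgrade recorded after Lemma~\ref{logaine2}, applying \eqref{eqlogineqp1} to $u/\norm{u}_p$ yields the homogeneous form $\int_{\mathbb{V}} \abs{u}^p \log \abs{u}^p \, d\mu \leq \norm{u}_p^p \log \norm{u}_p^p$ for every nonzero $u \in \ell^p(\mathbb{V})$, which is the version actually used to control the $(PS)$ sequences in the coercive case.
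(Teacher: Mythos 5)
Your proof is correct and follows exactly the paper's argument: the normalization $\norm{u}_p=1$ forces $\abs{u(x)}^p\leq 1$ at every vertex, so each summand $\abs{u(x)}^p\log\abs{u(x)}^p$ is nonpositive and the sum is $\leq 0$. The concluding scale-invariant form is also precisely the inequality \eqref{eqlogineqp} recorded in the paper immediately after the lemma.
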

\begin{proof}
Let  $u \in \ell^p(\mathbb{V})$ such that $\norm{u}_p = 1$. Since $\abs{u(x)}^p \leq \norm{u}_p^p = 1$ for all $x \in \mathbb{V}$, we have $\abs{u(x)}^p\log{\abs{u(x)}^p} \leq 0$ for any $x \in \mathbb{V}$ and $u(x)\neq 0$.  Thus
\[
    \int_{\mathbb{V}}{\abs{u}^p\log{\abs{u}^p}}\, d\mu \leq 0.
\]
\end{proof}
It follows from \eqref{eqlogineqp1} that, for any $u \in \ell^p(\mathbb{V})$ with $\norm{u}_p \neq 0$, we have
\begin{equation}
        \label{eqlogineqp}
        \int_{\mathbb{V}}\abs{u}^p\log{\abs{u}^p}\, d\mu \leqslant \norm{u}_p^p\log{\norm{u}_p^p}.
\end{equation}

The results of Section \ref{secframe}, including the equivalence of $W^{1,p}(\mathbb{V})$ and $\ell^p(\mathbb{V})$, the definition of the functionals $\Phi$ and $\Psi$, remain valid after making obvious changes (in particular, in the definition of $F_{1},-\frac{1}{2} s^{2} \log s^{2}$ should be replaced by $-\frac{1}{p}|s|^{p} \log |s|^{p}$ for $|s|<\delta$ ). In the proof of the boundedness part of Proposition \ref{propcompactps} inequality \eqref{eqlogineq2} must be replaced by \eqref{eqlogineqp}.
In the convergence part of the proof of Proposition \ref{propcompactps}, a formula corresponding to \eqref{eqps3} is
$$
\begin{aligned}
& \int_{\mathbb{V}}\left(\Gamma_p(u,u-u_n) + (V(x)+1)\left|u_{n}\right|^{p-2} u_{n}\left(u-u_{n}\right)\right) d\mu \\
& -\int_{\mathbb{V}} F_{2}^{\prime}\left(u_{n}\right)\left(u-u_{n}\right) d\mu + \Psi(u)-\Psi\left(u_{n}\right) \geq-\tau_{n}\left\|u-u_{n}\right\|,
\end{aligned}
$$
and this gives \eqref{eqpscon} with exponent $2$ replaced by $p$. In Lemma \ref{lemempty1} and Proposition \ref{propdefor1} only minor changes are needed.

On the other hand, the construction of $\left\{X_{k}\right\}$ and $\left\{Z_{k}\right\}$ requires great attention. Since $W^{1,p}(\mathbb{V})$ and $\ell^p(\mathbb{V})$ are
equivalent, we have the following lemma.
\begin{lemma}\label{lembasis}
$\{ \delta_x \}_{x \in \mathbb{V}}$ constitutes a Schauder basis of $X$.
\end{lemma}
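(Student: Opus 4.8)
The plan is to fix an enumeration $\mathbb{V}=\{x_1,x_2,\dots\}$ of the (countable) vertex set and to prove that every $u\in X$ has the expansion $u=\sum_{k=1}^{\infty}u(x_k)\delta_{x_k}$ converging in $X$, and that this is the only coefficient sequence for which the series converges to $u$. Concretely I would establish two facts: that the truncations $S_nu:=\sum_{k=1}^{n}u(x_k)\delta_{x_k}$ satisfy $\norm{u-S_nu}\to0$ (existence), and that convergence in $X$ determines the coefficients (uniqueness). Since each $\delta_{x_k}\neq0$, these are exactly the Schauder basis property. The model is the elementary fact that $\{\delta_{x}\}$ is the canonical Schauder basis of $\ell^{p}(\mathbb{V})$ for $1\le p<\infty$; the point requiring care is that in the coercive case the $X$-norm is strictly stronger than the $\ell^{p}$-norm, so I would argue directly rather than transport the $\ell^{p}$ statement.

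For the convergence I set $w_n:=u-S_nu$, so that $w_n$ vanishes on $\{x_1,\dots,x_n\}$ and equals $u$ on the tail $T_n:=\{x_{n+1},x_{n+2},\dots\}$. Then
\begin{equation*}
\norm{w_n}^{p}=\sum_{\{x,y\}\in\mathbb{E}}\abs{w_n(y)-w_n(x)}^{p}+\sum_{x\in\mathbb{V}}(V(x)+1)\abs{w_n(x)}^{p}.
\end{equation*}
The potential sum equals $\sum_{x\in T_n}(V(x)+1)\abs{u(x)}^{p}$, a tail of the convergent series $\int_{\mathbb{V}}(V(x)+1)\abs{u}^{p}\,d\mu<\infty$, hence $\to0$. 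For the gradient sum I would split the edges into two families. Edges with both endpoints in $T_n$ contribute $\abs{u(y)-u(x)}^{p}$, and their total is dominated by a tail of the convergent sum $\int_{\mathbb{V}}\abs{\nabla u}_p^{p}\,d\mu<\infty$, so it vanishes. Edges with exactly one endpoint $x\in T_n$ contribute $\abs{u(x)}^{p}$; since every vertex has at most $C$ neighbours by uniform local finiteness, these terms are bounded by $C\sum_{x\in T_n}\abs{u(x)}^{p}$. Using $c_0:=\inf_{\mathbb{V}}(V+1)>0$, which follows from $\inf_{\mathbb{V}}V>-1$, one has $\norm{u}_p^{p}\le c_0^{-1}\int_{\mathbb{V}}(V+1)\abs{u}^{p}\,d\mu<\infty$, so this last bound is again a vanishing tail. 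Adding the three estimates gives $\norm{w_n}\to0$.

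For uniqueness I would use that $X$ embeds continuously into $\ell^{p}(\mathbb{V})$, hence into $\ell^{\infty}(\mathbb{V})$, so that convergence in $X$ implies pointwise convergence. Consequently, if $\sum_k c_k\delta_{x_k}=0$ in $X$, evaluating the partial sums at a fixed vertex $x_m$ yields $c_m=0$ for every $m$; equivalently, the coefficients in any $X$-convergent expansion of $u$ must be the values $u(x_k)$. Together with the convergence $S_nu\to u$ established above, this shows that $\{\delta_{x}\}_{x\in\mathbb{V}}$ is a Schauder basis of $X$.

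I expect the only genuine obstacle to be the gradient contribution of the boundary edges joining $T_n$ to its complement: the truncation creates jumps of size $\abs{u(x)}$ across these edges, and unlike the potential term and the interior-edge term (which are plainly tails of convergent series), this contribution is not obviously small. Controlling it is exactly where the uniform degree bound $C$ and the positivity $\inf_{\mathbb{V}}(V+1)>0$ enter, turning the boundary sum into a constant multiple of a vanishing $\ell^{p}$-tail.
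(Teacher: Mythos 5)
Your proof is correct, but it is organized differently from the paper's. The paper gives no explicit proof of this lemma: it simply invokes the equivalence of $W^{1,p}(\mathbb{V})$ and $\ell^p(\mathbb{V})$ (the $p$-analogue of Lemma \ref{equi1}), so that the canonical basis $\{\delta_x\}$ of $\ell^p(\mathbb{V})$ is transported to $W^{1,p}(\mathbb{V})$ for free, since equivalent norms have the same convergent series. You instead verify the basis property directly in the $X$-norm: you show $\norm{u-S_nu}\to 0$ by splitting the energy of the truncation error into the potential tail, the edges interior to the tail set, and the boundary edges, controlling the last of these by the uniform degree bound $C$ and $\inf_{\mathbb{V}}(V+1)>0$, and you obtain uniqueness of coefficients from the embedding $X\hookrightarrow\ell^\infty(\mathbb{V})$. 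What your approach buys is genuine coverage of the weighted space $X$ of \eqref{eqnorm} in the coercive case, where $X$ is a proper subspace of $W^{1,p}(\mathbb{V})$ carrying a strictly stronger norm and the equivalence with $\ell^p(\mathbb{V})$ is not available --- and this is precisely the setting (Theorem \ref{theo111}) in which the Schauder basis is needed to build $X_k$ and $Z_k$. In that sense your argument is not merely an alternative but closes a step the paper leaves implicit; the paper's one-line justification, by contrast, is shorter and fully adequate when the potential is bounded, so that $X=W^{1,p}(\mathbb{V})$. Your three-way edge decomposition and the observation that a convergent nonnegative series summed over a decreasing family of index sets with empty intersection tends to zero are both sound, so I see no gap.
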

 Let the Schauder basis of $X$ be $\left(e_{k}\right)_{k=1}^{\infty}$. Then ,there exists a set of biorthogonal functionals $\left(e_{k}^{*}\right)_{k=1}^{\infty} \subset X^{*}$, i.e. functionals such that $\left\langle e_{m}^{*}, e_{k}\right\rangle=\delta_{k m}$(cf.\cite[Section 1.b]{LT}). It is then easy to see that $e_{k}^{*}$ are total, i.e. $\left\langle e_{k}^{*}, u\right\rangle=0$ for all $k$ implies $u=0$(cf.\cite[Section 1.f]{LT}). Now we take $X_{k}:=\operatorname{span}\left\{e_{1}, \ldots, e_{k}\right\}$ and $Z_{k}:=\mathrm{cl} \operatorname{span}\left\{e_{k+1}, e_{k+2}, \ldots\right\}$, where cl denotes the closure, and we define $a_{k}, b_{k}, B_{k}, N_{k}$ as previously. Lemma \ref{lemgamma} still holds, essentially with the same proof as in \cite{W}, see \cite[Proposition 4.6]{SW}. In the proof of Proposition \ref{lemgamma} the exponent $2$ should be replaced by $p$ and $\beta_{k}^{p}\|u\|^{p}$ by $\beta_{k}^{q}\|u\|^{q}$, where $q \in \left(p,\infty\right)$. That $\beta_{k} \rightarrow 0$ follows from the fact that if $u_{k} \in Z_{k-1}$ and $u_{k} \rightharpoonup u$, then $u$ must be 0 because $e_{k}^{*}$ are total. The remaining part of the proof is unchanged.

For the equation \eqref{pequation} on $\mathbb{Z}^N$,  similar to Theorems \ref{theo120}, \ref{theo130} and \ref{theo140}, we have the following results.

\begin{theorem}\label{theo12}
Assume that $G=(\mathbb{V},\mathbb{E})$ be the lattice graph $\mathbb{Z}^N$. If $V(x)$  is  $T$-periodic in $x$, then equation \eqref{pequation} has a ground state solution $u$. Moreover, $u(x) > 0$ for all $x \in \mathbb{Z}^N$ or $u(x) < 0$ for all $x \in \mathbb{Z}^N$.
\end{theorem}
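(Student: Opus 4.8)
The plan is to mirror the proof of Theorem \ref{theo120}, replacing the exponent $2$ by $p$ and the Hilbert structure of $X$ by its uniformly convex Banach structure. First I would record the two scaling identities that drive the argument. For $u \in D(J)\setminus\{0\}$ and $s>0$ one computes
\[
J(su) = s^{p}\left(J(u) - \log s \int_{\mathbb{Z}^N} |u|^{p}\, d\mu\right),
\]
so $s\mapsto J(su)$ has a unique critical point on $(0,\infty)$, characterized by the $p$-analogue of \eqref{eqnehari}, namely $J(u) = \frac{p\log s + 1}{p}\int_{\mathbb{Z}^N}|u|^{p}\,d\mu$. Subtracting $\frac1p\langle J'(u),u\rangle = 0$ from $J(u)$ gives the $p$-analogue of \eqref{equa001}, that is $J(u) = \frac1p\int_{\mathbb{Z}^N}|u|^{p}\,d\mu$ on the Nehari manifold $\mathcal{N}$. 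With these identities the mountain pass geometry of Lemma \ref{mp} and the equality $c=c_{\mathcal{N}}$ of Lemma \ref{equa} transfer once \eqref{eqlogineq3} is replaced by \eqref{eqlogineqp} and $\ell^2\cong X$ by $\ell^p\cong X$; in particular $c>0$.

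Next I would produce a bounded $(PS)$ sequence at level $c$. Boundedness is the $p$-version of Lemma \ref{bounded}: testing with $u_n$ yields $\|u_n\|_p^{p}=pJ(u_n)-\langle J'(u_n),u_n\rangle \leq C + o_n(1)\|u_n\|_p$ by the equivalence of $\ell^p(\mathbb{Z}^N)$ and $X$, which bounds $\|u_n\|$. For nonvanishing, if $u_n\to 0$ in $\ell^q(\mathbb{Z}^N)$ for some $q>p$ then the growth bound (the $p$-analogue of \eqref{use2}) and the splitting $J=\Phi+\Psi$ force $u_n\to 0$ in $X$ and $J(u_n)\to 0$, contradicting $c>0$; hence $\limsup_n\|u_n\|_q>0$, and interpolation between $\ell^p$ and $\ell^\infty$ yields $(y_n)\subset\mathbb{Z}^N$ with $|u_n(y_n)|\geq\eta/2$. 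Using the $T$-periodicity of $V$ and the resulting invariance of $J$ and $J'$ under translation by $T\mathbb{Z}^N$, I set $\tilde u_n(x)=u_n(x+k_nT)$ so that the peak stays in $\Omega=[0,T)^N\cap\mathbb{Z}^N$; then $\tilde u_n\rightharpoonup\tilde u\neq 0$ along a subsequence, and $\tilde u$ is a nontrivial critical point of $J$ by Lemma \ref{sub1}(vi). Since $\langle J'(\tilde u_n),\tilde u_n\rangle=o_n(1)$ gives $\|\tilde u_n\|_p^{p}\to pc$, while $\tilde u\in\mathcal{N}$ gives $\|\tilde u\|_p^{p}=pJ(\tilde u)\geq pc$, weak lower semicontinuity of the $\ell^p$ norm squeezes $\|\tilde u_n\|_p^{p}\to\|\tilde u\|_p^{p}$; together with pointwise convergence this forces $\tilde u_n\to\tilde u$ in $\ell^p(\mathbb{Z}^N)$ (Brezis--Lieb/Radon--Riesz), whence $J(\tilde u)=\frac1p\|\tilde u\|_p^{p}=c=c_{\mathcal{N}}$ and $\tilde u$ is a ground state.

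For the constant sign I would first prove $J(u)\geq J(u^{+})+J(u^{-})$ for all $u\in D(J)$. The potential and mass terms split because $|u|^{p}=|u^{+}|^{p}+|u^{-}|^{p}$ pointwise, and the gradient term splits edgewise: on an edge $\{x,y\}$ where $u$ changes sign the needed inequality $|u(y)-u(x)|^{p}\geq|u^{+}(y)-u^{+}(x)|^{p}+|u^{-}(y)-u^{-}(x)|^{p}$ reduces to $(a+b)^{p}\geq a^{p}+b^{p}$ for $a,b\geq 0$, $p>1$, with $a=|u(x)|$, $b=|u(y)|$. Arguing as in Theorem \ref{theo120}, if $u^{+},u^{-}\neq 0$ pick $t_\pm>0$ with $t_\pm u^{\pm}\in\mathcal{N}$; from $J(u)=\frac1p\int|u|^{p}>\frac1p\int|u^{\pm}|^{p}$ and $J(t_\pm u^{\pm})=\frac{t_\pm^{p}}{p}\int|u^{\pm}|^{p}\geq c_{\mathcal{N}}=J(u)$ one gets $t_\pm>1$, so the $p$-analogue of \eqref{eqnehari} gives $J(u^{\pm})=\frac{p\log t_\pm+1}{p}\int|u^{\pm}|^{p}>\frac1p\int|u^{\pm}|^{p}$; summing yields $J(u)<J(u^{+})+J(u^{-})$, a contradiction. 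Hence $u$ has constant sign, say $u\geq 0$. If $u(x_0)=0$, the pointwise form of \eqref{pequation} (as in Remark \ref{repoint}) reads $\sum_{y\sim x_0}|u(y)|^{p-2}u(y)=0$; every summand is $\geq 0$, so all neighbours of $x_0$ vanish, and connectedness of $\mathbb{Z}^N$ propagates this to $u\equiv 0$, contradicting $J(u)=c_{\mathcal{N}}>0$.

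The main obstacles are exactly the two places where the Hilbert arguments of Theorem \ref{theo120} cannot be copied. Because $X$ has no inner product, I cannot deduce strong convergence from norm convergence via the parallelogram law; what saves the argument is that only $\ell^p$-convergence of the translated sequence is actually needed for the energy identity, and this follows from pointwise convergence plus convergence of $\ell^p$ norms (with $X$-strong convergence, if desired, available from uniform convexity, since $u\mapsto$ (edge differences, weighted values) embeds $X$ isometrically into an $\ell^p$ space). The second delicate point is the sign decomposition: unlike the quadratic case, $\int|\nabla u|_p^{p}$ is not additive under $u=u^{+}+u^{-}$, so $J(u)\geq J(u^{+})+J(u^{-})$ must be extracted edge by edge from $(a+b)^{p}\geq a^{p}+b^{p}$, with the strict inequality required for the contradiction supplied not by edgewise strictness but by the Nehari scaling $t_\pm>1$.
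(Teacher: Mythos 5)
Your proposal is correct and follows exactly the route the paper intends: the paper omits the proof of Theorem \ref{theo12}, stating only that it is "basically the same" as Theorem \ref{theo120} with the exponent $2$ replaced by $p$, \eqref{eqlogineq3} by \eqref{eqlogineqp}, and $\ell^2\cong X$ by $\ell^p\cong X$, which is precisely the adaptation you carry out. You correctly supply the two details that genuinely change for $p\neq 2$ --- replacing the Hilbert-space convergence step by Brezis--Lieb/Radon--Riesz in $\ell^p$, and deriving $J(u)\geq J(u^{+})+J(u^{-})$ edgewise from $(a+b)^{p}\geq a^{p}+b^{p}$ --- so nothing is missing.
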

\begin{theorem}\label{theo13}
Assume that $G=(\mathbb{V},\mathbb{E})$ be the lattice graph $\mathbb{Z}^N$. If the potential $V(x)$ satisfies the assumptions:
\begin{enumerate}[label=($V_3$), ref=$V_\arabic{*}$]
\item\label{V3}
There exists a $T$-periodic function $V_p$ such that
$$
-1< \inf _{x\in\mathbb{Z}^N} V(x) \leq V(x) \leq V_p(x), \,\,\text{for all}\,\, x \in \mathbb{Z}^N,
$$
and
$$
\left|V(x)-V_p(x)\right| \rightarrow 0 \quad \text { as } \quad|x| \rightarrow+\infty.
$$
\end{enumerate}
then problem \eqref{pequation} has a ground state solution $u$. Moreover, $u(x) > 0$ for all $x \in \mathbb{Z}^N$ or $u(x) < 0$ for all $x \in \mathbb{Z}^N$.
\end{theorem}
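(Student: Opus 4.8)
My plan is to mirror the proof of Theorem \ref{theo130}, replacing every quadratic quantity by its $p$-homogeneous counterpart. I would first record two algebraic facts. The scaling identity, valid for $u\in D(J)\setminus\{0\}$ and $s>0$,
\[
J(su)=s^p\Bigl(J(u)-\log s\int_{\mathbb{V}}|u|^p\,d\mu\Bigr),
\]
shows that $s\mapsto J(su)$ has a unique maximum on $(0,\infty)$, so the mountain pass and Nehari levels coincide, $c=c_{\mathcal{N}}$, just as in Lemma \ref{equa}; the bound $c\le c_{\mathcal{N}}$ is immediate from \eqref{V3}. The second fact, valid whenever $\langle J'(u),u\rangle=0$, is
\[
J(u)-\frac1p\langle J'(u),u\rangle=\frac1p\int_{\mathbb{V}}|u|^p\,d\mu,
\]
which plays the role of $J-\tfrac12\langle J',\cdot\rangle=\tfrac12\|\cdot\|_2^2$ in the quadratic case.

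Next I would establish the $p$-analog of Lemma \ref{Lemasy}. For part (i), Theorem \ref{theo12} provides a strictly signed ground state $u_p$ of the periodic problem with $J_p(u_p)=c_p$, say $u_p>0$; choosing $s_p>0$ with $s_pu_p\in\mathcal{N}$ and using $V_p(x_0)>V(x_0)$ yields
\[
J_p(s_pu_p)-J(s_pu_p)\ge\frac{s_p^p}{p}\bigl(V_p(x_0)-V(x_0)\bigr)|u_p(x_0)|^p>0,
\]
so $c_p=J_p(u_p)\ge J_p(s_pu_p)>J(s_pu_p)\ge c_{\mathcal{N}}$. For part (ii), the $p$-version of Lemma \ref{bounded}, which uses only the equivalence of $W^{1,p}(\mathbb{V})$ and $\ell^p(\mathbb{V})$, makes the $(PS)$ sequence bounded; the analog of Lemma \ref{lemembed} from \cite{Shao} gives $u_n\rightharpoonup u$ with pointwise convergence, $u$ is a critical point by the analog of Lemma \ref{sub1}(vi), and the second identity above combined with Fatou's lemma gives $J(u)\le d$.

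The crux is proving $u\neq0$ in part (ii). Arguing by contradiction with $u=0$, the pointwise decay of $u_n$ together with $|V-V_p|\to0$ forces $J(u_n)-J_p(u_n)=\frac1p\int_{\mathbb{V}}(V-V_p)|u_n|^p\,d\mu\to0$, hence $J_p(u_n)\to d$; estimating $\langle J'(u_n)-J_p'(u_n),w\rangle$ by Hölder's inequality with conjugate exponents $p/(p-1)$ and $p$ against the potential difference, and exploiting the decay of $|V-V_p|$ with $\ell^p$-boundedness, gives $J_p'(u_n)\to0$. As in Theorem \ref{theo12}, nonvanishing in $\ell^p$ produces points $y_n$ with $|u_n(y_n)|\ge\eta/2$; translating by multiples of the period so that $y_n-k_nT$ stays in $\Omega=[0,T)^N\cap\mathbb{Z}^N$ and using the $T$-invariance of $J_p$, the translates $v_n$ converge weakly and pointwise to some $v\neq0$ that is a nontrivial critical point of $J_p$, whence $c_p\le J_p(v)\le d<c_p$, a contradiction. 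This concentration-compactness step, together with the care required to pass the nonlinear $p$-Laplacian terms through the translation, is the main obstacle.

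With Lemma \ref{Lemasy} available the theorem follows quickly. If $V\equiv V_p$ the conclusion is exactly Theorem \ref{theo12}. Otherwise pick $x_0$ with $V(x_0)<V_p(x_0)$, take a bounded $(PS)$ sequence at the mountain pass level $c\in(0,c_p)$ furnished by the $p$-analogs of Lemma \ref{mp} and Lemma \ref{bounded}, and apply Lemma \ref{Lemasy}(ii) to obtain a nontrivial critical point $u$ with $J(u)\le c$; since $u\in\mathcal{N}$ and $c\le c_{\mathcal{N}}\le J(u)\le c$, we get $c=c_{\mathcal{N}}$ and $u$ is a ground state. For the constant sign I would argue as in Theorem \ref{theo120}: the elementary inequality $(a+b)^p\ge a^p+b^p$ for $a,b\ge0$ gives $|u(x)-u(y)|^p\ge|u^+(x)-u^+(y)|^p+|u^-(x)-u^-(y)|^p$ on each edge across which $u$ changes sign, so $J(u)\ge J(u^+)+J(u^-)$; if both $u^\pm\neq0$, taking $t_\pm>0$ with $t_\pm u^\pm\in\mathcal{N}$ forces $t_\pm>1$, and the $p$-analog of \eqref{eqnehari} then yields the strict reverse inequality $J(u)<J(u^+)+J(u^-)$, a contradiction. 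Finally the pointwise form of \eqref{pequation} together with the connectedness of $\mathbb{Z}^N$ excludes interior zeros, giving $u>0$ everywhere or $u<0$ everywhere.
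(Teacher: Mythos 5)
Your proposal is correct and follows exactly the route the paper intends: the paper omits the proof of Theorem \ref{theo13}, stating only that it is the same as Theorem \ref{theo130} with the obvious $p$-homogeneous replacements, and your write-up carries out precisely that adaptation (the scaling identity, the identity $J(u)-\tfrac1p\langle J'(u),u\rangle=\tfrac1p\|u\|_p^p$, the $p$-version of Lemma \ref{Lemasy}, and the sign argument via $(a+b)^p\ge a^p+b^p$). No gaps.
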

\begin{theorem}\label{theo14}
Assume that $G=(\mathbb{V},\mathbb{E})$ be the lattice graph $\mathbb{Z}^N$. If the potential $V$ satisfies the following assumption
 \begin{enumerate}[label=($V_4$), ref=$V_\arabic{*}$]
\item\label{V4}
$$
-1 \leq \inf _{x \in \mathbb{Z}^N} V(x) \leq \sup _{x \in \mathbb{Z}^N} V(x)=V_{\infty}<\infty \,\, \text { with } \,\, V_{\infty}=\lim _{|x| \rightarrow \infty} V(x)
$$
\end{enumerate}
then problem \eqref{pequation} has a ground state solution $u$. Moreover, $u(x) > 0$ for all $x \in \mathbb{Z}^N$ or $u(x) < 0$ for all $x \in \mathbb{Z}^N$.
\end{theorem}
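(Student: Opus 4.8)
The plan is to treat the bounded potential as a special case of the asymptotically periodic one, exactly mirroring the reduction of Theorem \ref{theo140} to Theorem \ref{theo130} in the quadratic setting. The crucial observation is that the constant $V_\infty$ is trivially $T$-periodic for every $T \in \mathbb{N}$ (in particular $1$-periodic), since $V_\infty(x + e_i) = V_\infty$ for all $x \in \mathbb{Z}^N$ and $1 \le i \le N$. Thus $V_\infty$ is a legitimate candidate for the periodic comparison potential $V_p$ appearing in hypothesis \eqref{V3} of Theorem \ref{theo13}, and the whole argument reduces to checking that \eqref{V4} is a particular instance of \eqref{V3}.

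First I would verify that assumption \eqref{V4}, together with the standing hypothesis $\inf_{x \in \mathbb{Z}^N} V(x) > -1$ maintained throughout the paper, implies assumption \eqref{V3} with the choice $V_p := V_\infty$. Indeed, \eqref{V4} yields the pointwise ordering $-1 < \inf_{x \in \mathbb{Z}^N} V(x) \le V(x) \le V_\infty = V_p(x)$ for all $x \in \mathbb{Z}^N$, which is precisely the monotonicity requirement of \eqref{V3}. Moreover, the hypothesis $V_\infty = \lim_{|x| \to \infty} V(x)$ gives $\abs{V(x) - V_\infty} \to 0$ as $|x| \to \infty$, which is exactly the decay requirement of \eqref{V3}. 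Hence all structural hypotheses of Theorem \ref{theo13} are met for the pair $(V, V_\infty)$, and Theorem \ref{theo13} applies directly to furnish a ground state solution $u$ of \eqref{pequation} with the stated constant-sign property, completing the proof.

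It is worth emphasizing that the substance of the argument has already been absorbed into Theorem \ref{theo13}; the present statement is a clean corollary, and the reduction itself is essentially bookkeeping. The genuinely hard work lies upstream, in establishing the $p$-Laplacian analogues of Lemma \ref{Lemasy}, namely the strict energy gap $c_{\mathcal{N}} < c_p$ between the ground state level and the limiting level, and the compactness of $(PS)$ sequences below $c_p$. The main obstacle there is that, unlike the case $p = 2$, the gradient functional $\int_{\mathbb{V}} \abs{\nabla u}_p^p \, d\mu$ is no longer additive under the decomposition $u = u^+ + u^-$ nor under weak-limit splitting, so the Brezis--Lieb type convergence and the constant-sign comparison must be carried out with the nonlinear form $\Gamma_p$ and the elementary convexity inequalities for $\abs{t}^{p-2}t$ in place of the bilinear identities available for the Dirichlet energy. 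Once those ingredients are secured for the $p$-Laplacian, the reduction described above requires no further modification and the conclusion follows verbatim from the asymptotically periodic case.
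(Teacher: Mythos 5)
Your proposal is correct and follows essentially the same route as the paper: the authors likewise observe that the constant $V_\infty$ is $1$-periodic, check that \eqref{V4} (together with the standing assumption $\inf_{x\in\mathbb{Z}^N}V(x)>-1$) places $V$ in the setting of the asymptotically periodic hypothesis with $V_p:=V_\infty$, and invoke the asymptotically periodic theorem (Theorem \ref{theo13}, mirroring the reduction of Theorem \ref{theo140} to Theorem \ref{theo130}). Your closing remarks correctly locate the real work upstream in the $p$-Laplacian analogue of Lemma \ref{Lemasy}, which the paper also leaves to the reader with the comment that the proofs are "basically the same."
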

In Theorem \ref{theo12}, \ref{theo13} and \ref{theo14}, we work in the space $X=W^{1, p}\left(\mathbb{V}\right)$ with the same norm as in \eqref{eqnorm}. The essence of the proofs is basically the same as in Theorem \ref{theo120}, \ref{theo130} and \ref{theo140}, so we omit them here.

Similar to Theorem \ref{theoquasi}, we also can extend our results to a quasi-transitive graph  $G$.
\begin{theorem}
\label{theoquasip}
Let $G = (\mathbb{V},\mathbb{E})$ be a quasi-transitive graphs. Suppose $\Gamma \leq Aut(G)$ and the
action of $\Gamma$ on $G$ has finitely many orbits . Then, by replacing $T$-periodicity of $V$ with $\Gamma$-invariance, Theorem \ref{theo12} holds on $G$. By replacing $\abs{x}$ with $d(x,x_0)$ for some $x_0 \in \mathbb{V}$, Theorem \ref{theo13} and \ref{theo14} holds on $G$.
\end{theorem}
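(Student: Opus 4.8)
The plan is to mirror the proof of Theorem \ref{theo150}, transporting the $p$-Laplacian arguments of Theorems \ref{theo12}, \ref{theo13} and \ref{theo14} to the quasi-transitive setting by replacing the lattice translations with the $\Gamma$-action. First I would record the structural facts that make this possible. Since $\Gamma \leq \mathrm{Aut}(G)$ acts with finitely many orbits and $G$ is locally finite, vertices in the same orbit share the same degree and there are only finitely many orbits, so $G$ is uniformly locally finite; hence the equivalence of $W^{1,p}(\mathbb{V})$ and $\ell^p(\mathbb{V})$ (the $p$-analogue of Lemma \ref{equi1}) and the embedding of Lemma \ref{lemembed} remain at our disposal. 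I would then fix orbit representatives $\Omega = \{z_1,\dots,z_m\}$, one from each orbit, and for $g \in \Gamma$, $u \in X$ define the action $(g\cdot u)(x) := u(g^{-1}x)$. Because every $g$ preserves the edge set and the unit weights, it preserves the differences $\abs{u(y)-u(x)}$ along edges, so that $\abs{\nabla(g\cdot u)}_p = \abs{\nabla u}_p\circ g^{-1}$ and $\norm{g\cdot u} = \norm{u}$; thus each $g\cdot$ is a linear isometry of $X$ and of every $\ell^q(\mathbb{V})$, and $g$ commutes with $\Delta_p$ and leaves $\Gamma_p(u,v)$ invariant.

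Next I would establish the invariance of the functionals. Replacing the $T$-periodicity hypothesis by $\Gamma$-invariance of $V$ in Theorem \ref{theo12}, and of the comparison potential $V_p$ in Theorems \ref{theo13} and \ref{theo14} (with the decay condition now read along $d(x,x_0)\to\infty$ in place of $\abs{x}\to\infty$), the identity $V(g^{-1}x) = V(x)$ combined with the isometry property yields $J(g\cdot u) = J(u)$ and $\langle J'(g\cdot u), g\cdot v\rangle = \langle J'(u), v\rangle$, and likewise for $J_p$ and $J_p'$. This is precisely the symmetry that translation by $-k_nT$ supplied on $\mathbb{Z}^N$.

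With these two ingredients in hand I would rerun the concentration-compactness step of the $p$-Laplacian proofs without further change. After producing a bounded $(PS)$ sequence $\{u_n\}$ that does not vanish in $\ell^q(\mathbb{V})$ for some $q>p$, one extracts vertices $y_n$ with $\abs{u_n(y_n)} \geq \eta/2$. The single modification is that, instead of translating, I choose $g_n \in \Gamma$ with $g_n(y_n) \in \Omega$, which exists by quasi-transitivity; since $\Omega$ is finite, a subsequence of $g_n(y_n)$ equals a fixed vertex $z_{i_0}$. The transported sequence $v_n := g_n\cdot u_n$ then obeys $\abs{v_n(z_{i_0})} \geq \eta/2$, is bounded in $X$, and by Lemma \ref{lemembed} converges weakly and pointwise to some $v \neq 0$, with $J_p(v_n)\to d$ and $J_p'(v_n)\to 0$ by the invariance above; Lemma \ref{sub1}(vi) then makes $v$ a nontrivial critical point. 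The identification of the ground state, the strict inequality $c_{\mathcal{N}} < c_p$ in the asymptotic case (via a $\Gamma$-invariant ground state of the limiting problem), and the constant-sign property all carry over verbatim from Theorems \ref{theo12}--\ref{theo14}, with every occurrence of $\abs{x}\to\infty$ interpreted as $d(x,x_0)\to\infty$.

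I expect the main obstacle to be purely bookkeeping rather than conceptual: the one point requiring genuine care is verifying that the $\Gamma$-action is a norm-preserving symmetry of the full $p$-Laplacian structure, so that $J_p$, $J_p'$ and the $\ell^q$-norms are all $\Gamma$-invariant, and confirming that it is exactly the \emph{finiteness} of the orbit set $\Omega$ that restores the compactness otherwise destroyed by the noncompact symmetry group $\Gamma$ --- the role played on the lattice by the finite fundamental domain $[0,T)^N\cap\mathbb{Z}^N$. Once this is checked, no new analytic difficulty arises, which is why the proof reduces to the adaptation described above.
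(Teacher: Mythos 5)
Your proposal is correct and follows essentially the same route as the paper, which proves this result (via Theorem \ref{theo150}) simply by replacing the lattice translations $x \mapsto x + k_nT$ with group elements $g_n \in \Gamma$ chosen so that $g_n(y_n)$ lands in a finite set $\Omega$ of orbit representatives. Your write-up is in fact more careful than the paper's one-paragraph sketch, since you explicitly verify that the $\Gamma$-action is an isometry of $W^{1,p}(\mathbb{V})$ preserving $J$, $J'$ and the $\ell^q$-norms, and that finitely many orbits plus local finiteness yields uniform local finiteness.
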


\subsection*{Acknowledgements}
C. Ji was partially supported by National Natural Science Foundation of China (No. 12171152).

\section*{Appendix}
\begin{example}
Let $p>1$, using the fact
\begin{equation}\label{EXA}
\sum_{n=2}^{\infty} \frac{1}{n(\log n)^q} \begin{cases}<\infty, & \text { if } q > 1, \\ =\infty, & \text { if } 0 \leq q \leq 1,\end{cases}
\end{equation}
we can show that there exists $u \in W^{1,p}(\mathbb{Z}^N)$
with $\int_{\mathbb{Z}^N} \abs{u}^p \log \abs{u}^p d \mu = - \infty$.\\

Let
\[
        e := (1,0,...,0) \in \mathbb{Z}^N,
\]
and
\[
    u(x)= \begin{cases}|x|^{-\frac{1}{p}} (\log |x|)^{-\frac{2}{p}}, & x = ne, n \geqslant 3, \\ 0, & else.\end{cases}
\]
Then we have
\[
    \int_{\mathbb{Z}^N} \abs{u}^p d \mu=\sum_{x \in \mathbb{Z}^N} \abs{u(x)}^p = \sum_{n = 3}^\infty \frac{1}{n(\log n)^2}<\infty.
\]
Thus $u \in \ell^p(\mathbb{Z}^N)$. Similar to Lemma \ref{equi1}, we have $u \in W^{1,p}(\mathbb{Z}^N)$. However,
\[
\begin{aligned}
    \int_{\mathbb{Z}^N} \abs{u}^p \log \abs{u}^p d \mu & =\sum_{x \in \mathbb{Z}^N} \abs{u(x)}^p \log \abs{u(x)}^p \\
    & =-\left(\sum_{n = 3}^\infty \frac{1}{n \log n} + \sum_{n = 3}^\infty \frac{2 \log (\log n)}{n(\log n)^2}\right) \\
    & =-\infty
\end{aligned}
\]
where we use the fact \eqref{EXA} and $\frac{2 \log (\log n)}{n(\log n)^2}>0$ for $n \geq 3$.
\end{example}

  \end{document}